\def\R{\mathbb R}
\def\aaa{\mathbf{a}}
\def\bbb{\mathbf{b}}
\def\eee{\mathbf{e}}
\def\fff{\mathbf{f}}
\def\hhh{\mathbf{h}}
\def\jjj{\mathbf{j}}
\def\mmm{\mathbf{m}}
\def\nnn{\mathbf{n}}
\def\sss{\mathbf{s}}
\def\vvv{\mathbf{v}}
\def\www{\mathbf{w}}
\def\xxx{\mathbf{x}}
\def\zzz{\mathbf{z}}
\def\CCC{\mathbf{C}}
\def\HHH{\mathbf{H}}
\def\JJJ{\mathbf{J}}
\def\LLL{\mathbf{L}}
\def\MMM{\mathbf{M}}
\def\SSS{\mathbf{S}}
\def\eeta{\boldsymbol\eta}
\def\pphi{\boldsymbol\phi}
\def\ppi{\boldsymbol\pi}
\def\ppsi{\boldsymbol\psi}
\def\vphi{\boldsymbol\varphi}
\def\zzeta{\boldsymbol\zeta}
\def\SSSt{\frac{\partial\SSS}{\partial t}}
\def\ssst{\partial_t\sss}
\def\mmmt{\partial_t\mmm}
\def\hmmmh{\Pi_h\mmm_h}
\def\mmmh{\mmm_h}
\def\mmmhk{\mmm_{hk}}
\def\vvvhk{\vvv_{hk}}
\def\ssshk{\sss_{hk}}
\def\sssh{\sss_h}
\def\vvvh{\vvv_h}
\def\ppih{\ppi_h}
\def\MMMt{\frac{\partial\MMM}{\partial t}}
\def\heff{\mathbf{h}_{\mathrm{eff}}}
\def\HEFF{\mathbf{H}_{\mathrm{eff}}}
\def\Ce{C_{\mathrm{exch}}}
\def\Cani{C_{\mathrm{ani}}}
\def\Ms{M_s} 
\def\dt{d_t}
\def\eps{\varepsilon}
\def\zero{\mathbf{0}}
\def\wtd{\widetilde}
\def\ppih{{\boldsymbol\pi}_h}
\def\EE{\mathcal E}
\def\II{\mathcal I}
\def\KK{\mathcal K}
\def\MM{\mathcal M}
\def\NN{\mathcal N}
\def\PP{\mathcal P}
\def\SS{\mathcal S}
\def\TT{\mathcal T}
\def\UU{\mathcal U}
\newcommand{\bigO}[1]{\ensuremath{\mathop{}\mathopen{}\mathcal{O}\mathopen{}\left(#1\right)}}
\def\diam{{\rm diam}}
\newcommand{\norm}[3][]{#1\left\Vert #2#1\right\Vert_{#3}}
\newcommand{\abs}[2][]{#1\left\vert #2#1\right\vert}
\def\set#1#2{\left\{#1\,:\,#2\right\}}
\def\weakto{\rightharpoonup}
\def\subto{\stackrel{sub}{-\!\!\!\to}}
\def\subweakto{\stackrel{sub}{-\!\!\!\rightharpoonup}}
\newtheorem{theorem}{Theorem}
\newtheorem{proposition}[theorem]{Proposition}
\newtheorem{lemma}[theorem]{Lemma}
\newtheorem{corollary}[theorem]{Corollary}
\newtheorem{algorithm}[theorem]{Algorithm}
\newtheorem{definition}[theorem]{Definition}
\newtheorem{remark}[theorem]{Remark}
\journal{Computers \&\ Mathematics with Applications}
\begin{document}
\begin{frontmatter}
\title{Spin-polarized transport in ferromagnetic multilayers:\\ An unconditionally convergent FEM integrator}
\author[issp]{Claas~Abert}
\ead{claas.abert@tuwien.ac.at}
\author[exeter]{Gino~Hrkac}
\ead{G.Hrkac@exeter.ac.uk}
\author[asc]{Marcus~Page}
\author[asc]{Dirk~Praetorius}
\ead{dirk.praetorius@tuwien.ac.at}
\author[asc]{Michele~Ruggeri\corref{cor1}}
\ead{michele.ruggeri@tuwien.ac.at}
\author[issp]{Dieter~Suess}
\ead{dieter.suess@tuwien.ac.at}
\address[issp]{Institute of Solid State Physics, Vienna University of Technology, Austria}
\address[exeter]{College of Engineering, Mathematics and Physical Sciences, University of Exeter, United Kingdom}
\address[asc]{Institute for Analysis and Scientific Computing, Vienna University of Technology, Austria}
\cortext[cor1]{Corresponding author}
\begin{abstract}
We propose and analyze a decoupled time-marching scheme for the coupling of the Landau-Lifshitz-Gilbert equation with a quasilinear diffusion equation for the spin accumulation.
This model describes the interplay of magnetization and electron spin accumulation in magnetic and nonmagnetic multilayer structures.
Despite the strong nonlinearity of the overall PDE system, the proposed integrator requires only the solution of two linear systems per time-step.
Unconditional convergence of the integrator towards weak solutions is proved.
\end{abstract}
\begin{keyword}
micromagnetics \sep Landau-Lifshitz-Gilbert equation \sep spin accumulation \sep finite element method
\MSC 35K55 \sep 65M60 \sep 65Z05
\end{keyword}
\end{frontmatter}
\section{Introduction}
\noindent The interaction between electric current and magnetization in magnetic nanostructure devices and the control of this interaction have been realized through the prediction of the spin-transfer torque by Slonczewski and Berger~\cite{berger1996,slonczewski1996}.
The transfer of spin angular momentum between the spin-polarized electrical current and the local magnetization has been observed in various magnetic devices, such as metallic spin-valves systems, magnetic tunnel junctions, and magnetic domain walls in permalloy nanowires~\cite{rs2008,sr2008}.
Based on these experiments, a number of technological applications have been proposed, e.g., STT-MRAMs, racetrack memories, and magnetic vortex oscillators~\cite{mkhkdccls2008,pht2008}.
\par The fundamental physics underlying these phenomena is understood as due to a spin torque that arises from the transfer of the spin angular momentum between conduction free electrons and magnetization. 
In the original works of Berger and Slonczewski~\cite{berger1996,slonczewski1996}, a homogeneous spin accumulation is assumed due to a current which flows through a first magnetic layer perpendicular to the interface into a second magnetic layer.
The spin torque effect leads to an interaction between the spin-polarized current and the magnetization in the second layer. 
For magnetic multilayers it has been shown that a proper description of the magnetoresistance is essential to take into account the interplay between successive interfaces~\cite{js1988,vf1993,skw1987}.
In order to calculate the spin torque transfer, the spin transport properties have to be calculated far beyond the interface.
\par The original model of Berger and Slonczewski has been extended by taking into account the diffusion process of the spin accumulation by Shpiro et al.\ for one-dimensional systems \cite{slz2003} and by Garc\'ia-Cervera and Wang~\cite{gcw2007a,gcw2007b} for three-dimensional systems.
There, the overall system of PDEs (SDLLG) is a quasilinear diffusion equation for the evolution of the spin accumulation coupled to the Landau-Lifshitz-Gilbert equation (LLG) for the magnetization dynamics.
Existence of global weak solutions to LLG goes back to~\cite{as1992}, while, in the same spirit, existence of global weak solutions to SDLLG is proved in~\cite{gcw2007a}.
\par The reliable numerical integration of LLG (and, in particular, SDLLG) faces several challenges due to the nonuniqueness of weak solutions,
the explicit nonlinearity, and an inherent nonconvex modulus constraint.
Numerical approximation schemes for weak solutions of LLG are first proposed in~\cite{aj2006,bkp2008}.
First unconditional convergence results can be found in~\cite{bp2006,alouges2008a}, which consider the small-particle limit of LLG with exchange only.
On the one hand, the integrator of~\cite{bp2006} relies on the midpoint rule and reduced integration, and thus has to solve one nonlinear system of equations per time-step.
On the other hand, the tangent plane integrator of~\cite{alouges2008a}, which extends the prior works~\cite{aj2006,bkp2008}, relies on a reformulation of LLG which is solved for the discrete time derivative.
Each time-step consists of the solution of one linear system of equations plus nodal projection.
It has been generalized to linear-implicit time integration and full effective field in~\cite{akt2012,bffgpprs2014}.
\par Numerical integration of the coupling of LLG to other time-dependent PDEs has been analyzed in~\cite{bbp2008,bpp2013} for the full Maxwell equations (MLLG), in~\cite{lt2013,lppt2013} for the eddy current formulation, and in~\cite{bppr2013} for LLG with magnetostriction.
While~\cite{bbp2008} analyzes an extension of the midpoint scheme of~\cite{bp2006}, the works~\cite{bpp2013,lt2013,lppt2013,bppr2013} extend the tangent plane scheme from~\cite{alouges2008a}, and emphasis is on the decoupling of the time-marching scheme in~\cite{bpp2013,lppt2013,bppr2013}.
\par In the models and works mentioned, e.g., MLLG, the coupling of LLG and Maxwell equations is weak in the sense that the magnetization of LLG only contributes to the right-hand side of the Maxwell system, while the magnetic field from the Maxwell equations gives a contribution to the effective field of LLG.
In SDLLG the principal part of the differential operator of the spin diffusion equation depends nonlinearly on the magnetization.
A first numerical integrator for SDLLG is proposed and empirically validated in~\cite{gcw2007b}.
While this scheme appears to be unconditionally stable, the work does not prove convergence of the discrete solution towards a weak solution of SDLLG.
\par In our work, we extend the tangent plane integrator to SDLLG and prove unconditional convergence.
Altogether, the contributions of the current work can be summarized as follows:
\begin{itemize}
\item The proposed integrator is proven to converge (at least for a subsequence) towards a weak solution of SDLLG.
This convergence is unconditional, i.e., there is no CFL-type coupling of the time and space discretizations.
Despite the nonlinearity of SDLLG, each time-step requires only the solution of two successive linear systems, one for (the discrete time derivative of) the magnetization and one for the spin accumulation.
\item Our analysis thus provides, in particular, an alternate proof for the existence of (global) weak solutions of SDLLG, which has first been proved in~\cite{gcw2007a}.
In addition to~\cite{gcw2007a}, we prove that any weak limit of the proposed integrator satisfies an energy estimate similar to the theoretical behavior of (formal) strong solutions of SDLLG.
\item Unlike prior work on the tangent plane integrator, we adopt an idea from~\cite{bartels2013} and show that the nodal projection step of the tangent plane scheme is \emph{not} necessary.
In particular and unlike the cited works, our analysis can therefore avoid a technical angle condition on the triangulations used.
This result also transfers to the models and analysis of~\cite{alouges2008a,akt2012,bffgpprs2014,bpp2013,bppr2013,lppt2013} and simplifies their (extended) 
tangent plane integrators.
\end{itemize}
\subsection{Outline}
\noindent The paper is organized as follows:
In Section~\ref{sec:problem}, we introduce and accurately describe the mathematical model, see~\eqref{eq:spin_diff_llg_system} for the nondimensional formulation of SDLLG.
In Section~\ref{sec:algo}, we formulate a decoupled time-marching scheme (Algorithm~\ref{alg}) for the numerical integration of SDLLG and prove its 
well-posedness (Proposition~\ref{prop:discrete_wellposedness}).
Section~\ref{sec:convergence} contains the main result of our work (Theorem~\ref{thm:convergence}), which states unconditional convergence of the scheme towards weak solutions of SDLLG.
Following~\cite{gcw2007a}, weak solutions of SDLLG have finite energy.
In Section~\ref{sec:energy}, we prove that any weak limit obtained by the proposed numerical integrator shows the same energy behavior as formal strong solutions of SDLLG (Theorem~\ref{thm:energy}).
Numerical examples as well as the empirical validation of the proposed algorithm are postponed to a forthcoming paper~\cite{ahprs2014b}.
\subsection{Notation}
\noindent We use the standard notation~\cite{evans2010} for Lebesgue and Sobolev spaces and norms.
For any domain $D$, we denote the $L^2$ scalar product by $\left(r,s \right)_D=\int_D r s$ for all $r,s \in L^2(D)$.
In the case of (spaces of) vector-valued functions, we use bold letters.
For a sequence $\left\{ x_n \right\}_{n \geq 1}$ in a Banach space $X$ and $x \in X$, we write $x_n \to x$ (resp.\ $x_n \weakto x$) in $X$ if the sequence converges strongly (resp.\ weakly) to $x$ in $X$. Similarly, we write $x_n \subto x$ (resp.\ $x_n \subweakto x$) in $X$ if there exists a subsequence of $\left\{ x_n \right\}_{n \geq 1}$ which converges strongly (resp.\ weakly) to $x$ in $X$.
Throughout the paper, $C$ denotes a generic positive constant, independent of the discretization parameters, not necessarily the same at each occurrence.
Alternatively, we write $A \lesssim B$ to abbreviate $A \leq C\,B$.
Given $\aaa,\bbb \in \R ^3$, we denote by $\aaa\otimes\bbb\in\R ^{3 \times 3}$ the tensor product defined by $\left(\aaa\otimes\bbb\right)_{jk}=a_j b_k$ for all $1 \leq j,k \leq 3$.
By $\abs{\cdot}$, we denote both the Frobenius norm of a matrix and the Euclidean norm of a vector.
Since the meaning is clear from the argument, this does not lead to any ambiguity.
\section{Model problem}
\label{sec:problem}
\noindent In this section, we present the mathematical model, for which we introduce a nondimensional formulation, as well as the notion of a weak solution.
We use physical units in the International System of Units (SI).
\subsection{Physical background}
\noindent We consider a magnetic multilayer.
Let $\omega\subset\Omega$ be polyhedral Lipschitz domains in $\R ^3$, where $\Omega$ corresponds to the volume occupied by the multilayer, and $\omega$ corresponds to the ferromagnetic part.
A possible experimental setup is shown in~Figure~\ref{fig:multilayer}.
\begin{figure}[ht] \label{fig:multilayer}
\begin{center}
\includegraphics[height=18mm]{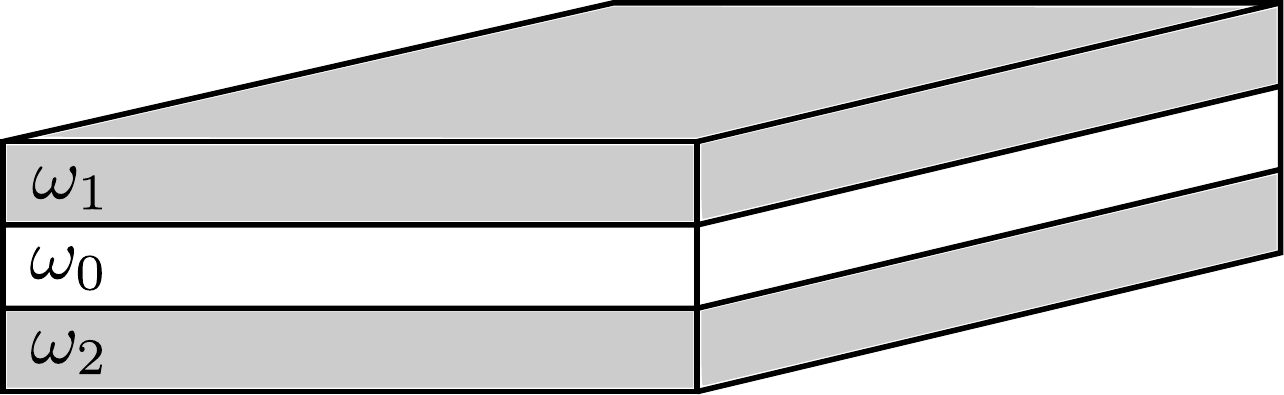}
\end{center}
\caption{Schematic of a magnetic nanopillar structure (trilayer) consisting of two ferromagnetic films, $\omega_1$ and $\omega_2$, separated by a nonmagnetic interlayer $\omega_0$. The current is assumed to flow perpendicularly from $\omega_1$ to a bottom electrode connected to $\omega_2$. In this case, $\omega=\omega_1 \cup \omega_2$ and $\Omega=\omega_1 \cup \omega_0 \cup \omega_2$.}
\end{figure}
Given some finite time $T>0$, we consider the time-space domains $\omega_T := (0,T) \times \omega$ and $\Omega_T := (0,T) \times \Omega$.
\par In micromagnetics, the quantity of interest is the magnetization $\MMM:\omega_T \rightarrow \R^3$, measured in ampere per meter (\si{\ampere\per\meter}).
If the temperature is constant and far below from the Curie temperature of the ferromagnetic material, $\MMM$ is a vector field of constant modulus $\abs{\MMM}=\Ms$, with $\Ms$ being the saturation magnetization (in \si{\ampere\per\meter}).
In the absence of spin currents, the dynamics of $\MMM$ is described by the Landau-Lifshitz-Gilbert equation (LLG), which, in the so-called Gilbert form, reads
\begin{equation} \label{eq:Llg}
\MMMt = -\gamma\mu_0 \MMM \times \HEFF + \frac{\alpha}{\Ms}\MMM \times \MMMt \quad \text{ in } \omega_T.
\end{equation}
Here, $\gamma=$ \SI{1.76e11}{\radian\per\second\per\tesla} (radian per second per tesla) and $\mu_0=$ \SI{4\pi e-7}{\newton\per\ampere\squared} (newton per square ampere) are the gyromagnetic ratio and the permeability of vacuum, respectively, while $\alpha>0$ is the nondimensional empiric Gilbert damping parameter.
The effective field $\HEFF:\Omega_T \rightarrow \R^3$, measured in \si{\ampere\per\meter}, depends on $\MMM$ and is proportional to the negative functional derivative of the total magnetic Gibbs free energy with respect to $\MMM$, i.e.,
\begin{equation}  \label{eq:func_deriv}
\mu_0 \HEFF(\MMM) = - \frac{\delta \EE (\MMM)}{\delta \MMM}.
\end{equation}
In~\eqref{eq:func_deriv} the energy functional reads
\begin{equation} \label{eq:ll_energy}
\EE (\MMM)
= \frac{A}{\Ms^2} \int_{\omega}\abs{\nabla\MMM}^2
+ K \int_{\omega}\phi\left(\MMM/\Ms\right)
- \mu_0 \int_{\omega}\HHH_e\cdot\MMM
+ \frac{\mu_0}{2} \int_{\R^3}\abs{\nabla u}^2
\end{equation}
and consists of four terms, which correspond to the exchange energy, the anisotropy energy, Zeeman's energy, and the magnetostatic energy, respectively.
In~\eqref{eq:ll_energy}, $A>0$ is the so-called exchange stiffness constant, measured in joule per meter (\si{\joule\per\meter}), and $K>0$ is the anisotropic constant (in \si{\joule\per\meter\cubed}), while $\phi:\mathbb{S}^2\to\R$ is a (nondimensional) smooth function, which takes into account the anisotropy of the ferromagnetic material.
Moreover, $\HHH_e$ is a given external field (in \si{\ampere\per\meter}), while $u:\R ^3 \to \R$ refers to the magnetostatic potential, which is the unique solution of the full-space transmission problem
\begin{equation*}
\aligned
&\Delta u = \nabla\cdot\MMM  && \text{in } \omega, \\
&\Delta u = 0 && \text{in } \R^3\setminus\overline\omega, \\
&\left[ u \right]= 0 && \text{on } \partial\omega, \\
&\left[ \partial_{\nnn} u \right]= -\MMM\cdot\nnn && \text{on } \partial\omega, \\
&u(\xxx)= \bigO{1/\abs{\xxx}} && \text{as } \abs{\xxx}\to\infty.
\endaligned
\end{equation*}
Combining~\eqref{eq:func_deriv} and~\eqref{eq:ll_energy}, we obtain the following expression for the effective field
\begin{equation} \label{eq:Eff_field}
\HEFF(\MMM) = \frac{2 A}{\mu_0 \Ms^2} \Delta\MMM - \frac{K}{\mu_0 \Ms} \nabla\phi\left(\MMM/\Ms\right) + \HHH_e + \HHH_s,
\end{equation}
where $\HHH_s=-\nabla u$ denotes the stray field (in \si{\ampere\per\meter}).
\par The dynamics of the spin accumulation $\SSS:\Omega_T \rightarrow \R^3$, measured in \si{\ampere\per\meter}, is described by the diffusion equation
\begin{equation} \label{eq:Spin_diff}
\SSSt = -\nabla\cdot\JJJ_S - \frac{2 \wtd D_0}{\lambda_{sf}^2} \SSS - \frac{2 \wtd D_0}{\Ms \lambda_J^2} \SSS \times \MMM \quad \text{ in } \Omega_T,
\end{equation}
where $\wtd D_0:\Omega \to \R$ is the diffusion coefficient (in \si{\square\meter\per\second}), $\lambda_{sf}$ is the characteristic length of the spin-flip relaxation, and $\lambda_J$ is related to the mean free path of an electron (both measured in \si{\meter}).
The spin current $\JJJ_S:\Omega_T \to \R^{3 \times 3}$, measured in \si{\ampere\per\second}, is defined by
\begin{equation} \label{eq:Spin_current}
\JJJ_S = \frac{\beta\mu_B}{e \Ms} \MMM \otimes \JJJ_e - 2 \wtd D_0 \left( \nabla \SSS - \frac{\beta \beta'}{\Ms^2} \MMM \otimes \left( \nabla\SSS \cdot \MMM \right) \right) \quad \text{ in } \Omega_T,
\end{equation}
where $\mu_B=$ \SI{9.2741e-24}{\ampere\square\meter} is the Bohr magneton, $e=$ \SI{-1.602e-19}{\ampere\second} is the charge of the electron, and $\JJJ_e:\Omega_T \to \R^3$ is the applied current density field (in \si{\ampere\per\square\meter}), while the constants $0<\beta,\beta'<1$ are the nondimensional spin polarization parameters of the magnetic layers.
In~\eqref{eq:Spin_current} we denote by $\nabla\SSS \cdot \MMM \in \R^3$ the matrix-vector product between the transpose of the Jacobian $\nabla\SSS$ and $\MMM$, i.e., $\nabla\SSS \cdot \MMM= M_1 \nabla S_1 +  M_2 \nabla S_2 + M_3 \nabla S_3$.
In~\eqref{eq:Spin_diff}--\eqref{eq:Spin_current}, it is implicitly assumed that $\MMM=0$ in the nonmagnetic but conducting material $\Omega\setminus\overline\omega$.
\par To describe the dynamics of the magnetization, we take into account the interaction between the spin accumulation and the magnetization.
Thus, we consider an augmented version of~\eqref{eq:Llg}, namely
\begin{equation} \label{eq:Spin_llg}
\MMMt = -\gamma \MMM \times \left( \mu_0 \HEFF(\MMM) + J\SSS \right) + \frac{\alpha}{\Ms}\MMM \times \MMMt \quad \text{ in } \omega_T,
\end{equation}
where the constant $J$ in \si{\newton\per\square\ampere} is the strength of the interaction between the spin accumulation and the magnetization.
Finally, to complete the setting, \eqref{eq:Spin_diff}--\eqref{eq:Spin_llg} are supplemented by initial conditions
\begin{equation*}
\MMM(0) = \MMM^0  \text{ in } \omega \quad \text{ and } \quad \SSS(0) = \SSS^0  \text{ in } \Omega,
\end{equation*}
for some given initial states $\MMM^0:\omega\to\R ^3$ and $\SSS^0:\Omega\to\R ^3$ with $\abs{\MMM^0}=\Ms$, and homogeneous Neumann boundary conditions
\begin{equation*}
\frac{\partial\MMM}{\partial\nnn} = 0  \text{ on } (0,T)\times\partial\omega
\quad \text{ and } \quad \frac{\partial\SSS}{\partial\nnn} = 0  \text{ on } (0,T)\times\partial\Omega.
\end{equation*}
\subsection{Nondimensional form of the problem}
\noindent We introduce a nondimensional form of the system~\eqref{eq:Spin_diff}--\eqref{eq:Spin_llg}.
We perform the substitution $t'=\gamma\mu_0 \Ms t$, with $t'$ being the so-called (nondimensional) reduced time, and set $T'=\gamma\mu_0 \Ms T$.
We rescale the spatial variable by $\xxx'=\xxx/L$, with $L$ being a characteristic length of the problem (measured in \si{\meter}), e.g., the intrinsic length scale $L=\sqrt{2A/\mu_0\Ms^2}$.
However, to simplify our notation, we write $t$, $T$, $\xxx$, $\omega$, and $\Omega$, instead of $t'$, $T'$, $\xxx'$, $\omega/L$, and $\Omega/L$, respectively.
We introduce the nondimensional vector unknowns $\mmm=\MMM/\Ms$, so that the modulus constraint becomes $\abs{\mmm}=1$, and $\sss=\SSS/\Ms$.
Furthermore, we set $\heff=\HEFF/\Ms$, $\fff=\HHH_e/\Ms$, $\hhh_s=\HHH_s/\Ms$, $c=J/\mu_0$, $\jjj=\mu_B \JJJ_e / (L e \gamma \mu_0 \Ms^2)$, $D_0=2\wtd D_0 / (\gamma\mu_0 \Ms L^2)$, $\mmm^0=\MMM^0/\Ms$ and $\sss^0=\SSS^0/\Ms$.
With these substitutions, the nondimensional augmented form of LLG becomes
\begin{equation*}
\mmmt = - \mmm \times \left( \heff(\mmm) + c \sss \right) + \alpha \mmm \times \mmmt \quad \text{ in } \omega_T,
\end{equation*}
where the effective field is given by
\begin{equation} \label{eq:eff_field}
\heff(\mmm) = \Ce\Delta\mmm - \Cani\nabla\phi\left(\mmm\right) + \fff + \hhh_s(\mmm),
\end{equation}
with $\Ce=2A/(\mu_0 L^2 \Ms^2)$ and $\Cani=K/(\mu_0 \Ms^2)$, while the diffusion equation~\eqref{eq:Spin_diff} reads
\begin{equation*}
\ssst = -\nabla\cdot\left( \beta \mmm \otimes \jjj - D_0 \left( \nabla \sss - \beta \beta' \mmm \otimes \left( \nabla\sss \cdot \mmm \right) \right) \right)- \frac{L^2 D_0}{\lambda_{sf}^2} \sss - \frac{L^2 D_0}{\lambda_J^2} \sss \times \mmm \quad \text{ in } \Omega_T.
\end{equation*}
To simplify our notation and without loss of generality, we assume that $L=\lambda_{sf}=\lambda_J$.
\par To sum up, we seek for $\mmm:\omega_T \to \R ^3$ with $\abs{\mmm}=1$ and $\sss:\Omega_T \to \R ^3$ such that
\begin{subequations}\label{eq:spin_diff_llg_system}
\begin{align}
\label{eq:spin_llg}
&\mmmt = - \mmm \times \left( \heff(\mmm) + c \sss \right) + \alpha \mmm \times \mmmt && \quad\text{in } \omega_T, \\
\label{eq:spin_diff}
\begin{split}
&\ssst = -\nabla\cdot\left( \beta \mmm \otimes \jjj - D_0 \left( \nabla \sss
- \beta \beta' \mmm \otimes \left( \nabla\sss \cdot \mmm \right) \right) \right) \\
&\quad \qquad -D_0 \sss - D_0 \left(\sss \times \mmm \right)
\end{split}
&& \quad\text{in } \Omega_T, \\
&\partial_\nnn\mmm = 0  && \quad\text{on } (0,T)\times\partial\omega, \\
&\partial_\nnn\sss = 0  && \quad\text{on } (0,T)\times\partial\Omega, \\
&\mmm(0) = \mmm^0  && \quad\text{in } \omega, \\
\label{eq:bound_cond_s}
&\sss(0) = \sss^0  && \quad\text{in } \Omega.
\end{align}
\end{subequations}
Here, $c,\alpha>0$ and $0<\beta,\beta'<1$ are constants.
For the diffusion coefficient $D_0 \in L^{\infty}(\Omega)$, we assume that there exists a positive constant $D_*$ such that $D_0 \geq D_*$ a.e.\ in $\Omega$.
We also assume that $\fff \in \LLL^2(\omega_T)$ and $\jjj \in L^2(0,T;\HHH^1(\Omega))$.
Moreover, in~\eqref{eq:spin_llg} we allow a more general effective field of the form
\begin{equation} \label{eq:gen_eff_field}
\heff(\mmm) = \Ce \Delta\mmm + \ppi(\mmm) + \fff,
\end{equation}
where $\ppi:\LLL^2(\omega)\to \LLL^2(\omega)$ is a general time-independent field contribution.
We emphasize that~\eqref{eq:gen_eff_field} in particular covers~\eqref{eq:eff_field} with $\ppi(\mmm)=-\Cani\nabla\phi(\mmm) + \hhh_s(\mmm)$.
\begin{remark}
The constraint $\abs{\mmm}=1$ directly follows from the PDE formulation, provided $\abs{\mmm^0}=1$  in $\omega_T$.
Indeed, from~\eqref{eq:spin_llg}, we deduce that $\partial_t \abs{\mmm}^2 = 2 \mmm \cdot \mmmt = 0$ in $\omega_T$.
\end{remark}
\subsection{Weak solution of the problem}
\noindent Let $\wtd\HHH^{-1}(\Omega)=\HHH^1(\Omega)^*$ be the dual space of $\HHH^1(\Omega)$ and denote by $\left\langle\cdot,\cdot\right\rangle$ the corresponding duality pairing, understood in the sense of the Gelfand triple $\HHH^1(\Omega) \subset \LLL^2(\Omega) \subset \wtd\HHH^{-1}(\Omega)$.
In view of the weak formulation of~\eqref{eq:spin_diff}, we consider the time-dependent bilinear form $a(t,\cdot,\cdot):\HHH^1(\Omega)\times\HHH^1(\Omega)\to\R$ defined by
\begin{equation*}
\begin{split}
a(t,\zzeta_1,\zzeta_2)=
& \left(D_0 \nabla\zzeta_1,\nabla\zzeta_2\right)_{\Omega}
- \beta\beta'\left( D_0 \mmm(t)\otimes\left(\nabla\zzeta_1\cdot\mmm(t)\right),\nabla\zzeta_2\right)_{\omega} \\
&\quad +\left( D_0 \zzeta_1,\zzeta_2\right)_{\Omega}
+\left( D_0 \left(\zzeta_1 \times \mmm(t) \right),\zzeta_2\right)_{\omega},
\end{split}
\end{equation*}
for all $t \in [0,T]$ and $\zzeta_1,\zzeta_2 \in \HHH^1(\Omega)$.
\par We recall from~\cite[Definition 1]{gcw2007a} the notion of a weak solution of the SDLLG system~\eqref{eq:spin_diff_llg_system}, which extends the definition of weak solutions of LLG from~\cite{as1992}.
\begin{definition} \label{def:weak_sol}
Let $\mmm^0 \in \HHH^1(\omega)$ with $\abs{\mmm^0}=1$ a.e.\ in $\omega$, and $\sss^0 \in \HHH^1(\Omega)$.
The tupel $(\mmm,\sss)$ is called a weak solution of SDLLG if the following properties \rm{(i)--(iv)} are satisfied:
\begin{itemize}
\item[(i)] $\mmm \in \HHH^1(\omega_T)$ with $\abs{\mmm}=1$ a.e.\ in $\omega_T$ and $\mmm(0)=\mmm^0$ in the sense of traces,
\item[(ii)] $\sss \in L^2(0,T;\HHH^1(\Omega)) \cap L^{\infty}(0,T;\LLL^2(\Omega))$, $\ssst \in L^2(0,T;\wtd\HHH^{-1}(\Omega))$ and $\sss(0)=\sss^0$ in the sense of traces,
\item[(iii)] for all $\vphi \in \HHH^1(\omega_T)$, it holds
\begin{subequations}
\begin{equation} \label{eq:weak_spin_llg}
\begin{split}
&\left(\mmmt,\vphi\right)_{\omega_T}
+ \alpha\left(\mmmt \times \mmm,\vphi\right)_{\omega_T} \\
&\quad = -\Ce \left(\nabla\mmm\times\mmm,\nabla\vphi \right)_{\omega_T}
+ \left(\ppi(\mmm)\times \mmm,\vphi \right)_{\omega_T}
+ \left(\fff\times\mmm,\vphi\right)_{\omega_T}
+ c \left(\sss\times\mmm,\vphi\right)_{\omega_T},
\end{split}
\end{equation}
\item[(iv)] for almost all $t \in [0,T]$ and all $\zzeta \in \HHH^1(\Omega)$, it holds
\begin{equation} \label{eq:weak_spin_diff}
\left\langle \ssst(t),\zzeta \right\rangle
+ a(t,\sss(t),\zzeta)
= \beta \left(\mmm(t)\otimes\jjj(t),\nabla\zzeta\right)_{\omega}
- \beta \left(\jjj(t)\cdot\nnn,\mmm(t)\cdot\zzeta\right)_{\partial\Omega\cap\partial\omega}.
\end{equation}
\end{subequations}
\end{itemize}
\end{definition}
\begin{remark}
If $(\mmm,\sss)$ is a weak solution of SDLLG, then it holds $\mmm\in C(0,T;\LLL^2(\omega))$ and $\sss\in C(0,T;\LLL^2(\Omega))$, cf., e.g., \cite[Section~5.9.2, Theorem~2 and Theorem~3]{evans2010}.
\end{remark}
\begin{remark}
The boundary term in~\eqref{eq:weak_spin_diff} is missing in~\cite{gcw2007a}. This error has recently been noticed and corrected, so that the overall result of~\cite{gcw2007a} remains valid~\cite{garciacervera2014}.
The present analysis provides an alternate proof for the existence of solutions and hence validity of the results of~\cite{gcw2007a,garciacervera2014}.
\end{remark}
\noindent The following lemma highlights the parabolic nature of equation~\eqref{eq:spin_diff}.
\begin{lemma} \label{lem:coercivity}
The time-dependent bilinear form $a(t,\cdot,\cdot)$ is continuous and positive definite.
Indeed, it holds
\begin{equation} \label{eq:coercivity_a}
a(t,\zzeta,\zzeta)
\geq (1 - \beta\beta') D_* \norm{\zzeta}{\HHH^1(\Omega)}^2
\end{equation}
for almost all $t \in [0,T]$.
\end{lemma}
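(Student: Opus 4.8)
The plan is to evaluate the bilinear form on the diagonal, i.e.\ to set $\zzeta_1=\zzeta_2=\zzeta$ in the definition of $a(t,\cdot,\cdot)$, and to control the four resulting terms separately, using only $D_0\geq D_*$ a.e., $D_0\in L^\infty(\Omega)$, $\omega\subseteq\Omega$, and the pointwise constraint $\abs{\mmm(t)}=1$ a.e.\ in $\omega$. First, the reaction term involving the cross product drops out: since $(\zzeta\times\mmm(t))\cdot\zzeta=0$ pointwise, one has $\bigl(D_0(\zzeta\times\mmm(t)),\zzeta\bigr)_\omega=0$. The two symmetric terms are bounded below by $\bigl(D_0\nabla\zzeta,\nabla\zzeta\bigr)_\Omega\geq D_*\norm{\nabla\zzeta}{\LLL^2(\Omega)}^2$ and $\bigl(D_0\zzeta,\zzeta\bigr)_\Omega\geq D_*\norm{\zzeta}{\LLL^2(\Omega)}^2$, so only the term with prefactor $-\beta\beta'$ has to be absorbed.

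The decisive step will be precisely that term. Writing the Frobenius contraction in components and using that $(\nabla\zzeta\cdot\mmm(t))_k=\sum_j m_j(t)\,\partial_k\zeta_j$, I would establish the pointwise identity
\[
\bigl(\mmm(t)\otimes(\nabla\zzeta\cdot\mmm(t))\bigr):\nabla\zzeta=\abs{\nabla\zzeta\cdot\mmm(t)}^2\geq 0,
\]
so that this term actually contributes with a negative sign to $a(t,\zzeta,\zzeta)$ — which is exactly why the hypothesis $\beta\beta'<1$ is needed. To dominate it by the Dirichlet-type term, I would apply the Cauchy--Schwarz inequality in the summation index to obtain $\abs{\nabla\zzeta\cdot\mmm(t)}\leq\abs{\mmm(t)}\,\abs{\nabla\zzeta}=\abs{\nabla\zzeta}$ a.e.\ in $\omega$, whence
\[
\beta\beta'\int_\omega D_0\,\abs{\nabla\zzeta\cdot\mmm(t)}^2\leq\beta\beta'\int_\omega D_0\,\abs{\nabla\zzeta}^2\leq\beta\beta'\int_\Omega D_0\,\abs{\nabla\zzeta}^2,
\]
using $\omega\subseteq\Omega$ and $D_0\geq 0$ in the last step. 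Combining this with the lower bound for $\bigl(D_0\nabla\zzeta,\nabla\zzeta\bigr)_\Omega$ leaves a contribution $\geq(1-\beta\beta')\int_\Omega D_0\abs{\nabla\zzeta}^2\geq(1-\beta\beta')D_*\norm{\nabla\zzeta}{\LLL^2(\Omega)}^2$, and adding $\bigl(D_0\zzeta,\zzeta\bigr)_\Omega\geq D_*\norm{\zzeta}{\LLL^2(\Omega)}^2\geq(1-\beta\beta')D_*\norm{\zzeta}{\LLL^2(\Omega)}^2$ (since $0<1-\beta\beta'<1$) gives~\eqref{eq:coercivity_a}; in particular, $a(t,\cdot,\cdot)$ is positive definite.

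Continuity will be routine: bounding each of the four terms by the Cauchy--Schwarz inequality and using $D_0\in L^\infty(\Omega)$ together with $\abs{\mmm(t)}=1$ — so that $\abs{\mmm(t)\otimes(\nabla\zzeta_1\cdot\mmm(t))}\leq\abs{\nabla\zzeta_1}$ and $\abs{\zzeta_1\times\mmm(t)}\leq\abs{\zzeta_1}$ pointwise — yields $\abs{a(t,\zzeta_1,\zzeta_2)}\leq C\,\norm{\zzeta_1}{\HHH^1(\Omega)}\norm{\zzeta_2}{\HHH^1(\Omega)}$ with $C$ depending only on $\norm{D_0}{L^\infty(\Omega)}$ and $\beta\beta'$, uniformly in $t\in[0,T]$. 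The only genuinely delicate point is the index bookkeeping for the tensor-product term and the resulting observation that it is a negative contribution controlled by the factor $\beta\beta'<1$; everything else reduces to direct estimates.
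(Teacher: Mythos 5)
Your proof is correct and follows essentially the same route as the paper: drop the cross-product term, use $\abs{\mmm(t)}=1$ with Cauchy--Schwarz to bound the anisotropic tensor term by $\abs{\nabla\zzeta}^2$, invoke $D_0\ge D_*$ and $\omega\subset\Omega$, and finally weaken $D_*$ to $(1-\beta\beta')D_*$ on the $\LLL^2$ part. The only addition is your explicit observation that the Frobenius contraction equals $\abs{\nabla\zzeta\cdot\mmm(t)}^2\ge 0$, where the paper simply estimates its modulus; this is a nice clarification but not a different argument.
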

\begin{proof}
The continuity directly follows from the regularity assumptions on the data, as $\abs{\mmm}=1$ a.e.\ in $\Omega_T$.
As for the positive definiteness, we note
\begin{equation*}
\abs{\mmm(t)\otimes\left(\nabla\zzeta\cdot\mmm(t)\right)\cdot\nabla\zzeta}
\leq \abs{\nabla\zzeta}^2
\quad \text{for all } \zzeta\in\HHH^1(\Omega).
\end{equation*}
As a consequence, since $D_0 \geq D_*$ and $0 < \beta\beta' <1$, we get
\begin{equation*}
\begin{split}
a(t,\zzeta,\zzeta)
& = \left(D_0 \nabla\zzeta,\nabla\zzeta\right)_{\Omega}
- \beta\beta'\left( D_0 \mmm(t)\otimes\left(\nabla\zzeta\cdot\mmm(t)\right),\nabla\zzeta\right)_{\omega}
+\left( D_0 \zzeta,\zzeta\right)_{\Omega} \\
& \geq \left(D_0 \nabla\zzeta,\nabla\zzeta\right)_{\Omega}
- \beta\beta'\left(D_0\abs{\mmm(t)\otimes\left(\nabla\zzeta\cdot\mmm(t)\right)},\abs{\nabla\zzeta}\right)_{\omega}
+\left( D_0 \zzeta,\zzeta\right)_{\Omega} \\
& \geq (1 - \beta\beta') D_* \norm{\nabla\zzeta}{\LLL^2(\Omega)}^2
+ D_* \norm{\zzeta}{\LLL^2(\Omega)}^2.
\end{split}
\end{equation*}
This establishes~\eqref{eq:coercivity_a} and concludes the proof.
\end{proof}
\section{Numerical algorithm}
\label{sec:algo}
\noindent For the time discretization, we consider a uniform partition $0 = t_0 < t_1 < \dots < t_N = T$ of the time interval $[0,T]$ with time-step size $k=T/N$, i.e., $t_j=jk$ for $0 \leq j \leq N$.
\par Given a sequence of functions $\left\{\vphi^j\right\}_{0 \leq j \leq N}$, such that any $\vphi^j$ is associated with the time-step $t_j$, for $0 \leq j \leq N-1$ we define the difference quotient $\dt\vphi^{j+1} := (\vphi^{j+1} - \vphi^j)/k$.
We consider the piecewise linear and the two piecewise constant time-approximations defined as follows:
For $0 \leq j \leq N-1$ and $t \in [t_j,t_{j+1})$, we have
\begin{equation} \label{eq:time_approx}
\vphi_{k}(t) := \frac{t-t_j}{k}\vphi^{j+1} + \frac{t_{j+1} - t}{k}\vphi^j, \quad \vphi_{k}^-(t):= \vphi^j, \quad \vphi^+(t):= \vphi^{j+1}.
\end{equation}
Obviously, it holds $\partial_t \vphi_k(t) = \dt \vphi^{j+1}$ for all $t \in [t_j,t_{j+1})$.
\par For the spatial discretization, let $\left\{\TT_h^{\Omega}\right\}_{h>0}$ be a shape-regular and (globally) quasi-uniform family of regular tetrahedral triangulations of $\Omega$, parameterized by the meshsize $h=\max h_K$, where $h_K = \diam(K)$ for all $K \in \TT_h^{\Omega}$.
By $\TT_h^{\omega}$, we denote the restriction of $\TT_h^{\Omega}$ to $\omega$.
We assume that $\omega$ is resolved, i.e.,
\begin{equation*}
\TT_h^{\omega} = {\TT_h^{\Omega}}\vert_{\omega} = \set{K \in \TT_h^{\Omega}}{K \cap \omega \neq \emptyset} \quad \text{and} \quad \overline \omega = \bigcup_{K \in \TT_h^{\omega}} K.
\end{equation*}
Let us denote by $\SS^1(\TT_h^{\Omega})^3$ the standard finite element space of globally continuous and piecewise affine functions from $\Omega$ to $\R^3$.
Correspondingly, we also consider $\SS^1(\TT_h^{\omega})^3$.
By $\II_h^{\Omega}: \CCC(\overline\Omega) \to \SS^1(\TT_h^{\Omega})^3$ and $\II_h^{\omega}: \CCC(\overline\omega) \to \SS^1(\TT_h^{\omega})^3$, we denote the nodal interpolation operators onto these spaces.
Since $\omega$ is resolved, these operators coincide on $\omega$, i.e., $\II_h^{\Omega}(\vphi)\vert_{\omega}=\II_h^{\omega}\left(\vphi\vert_{\omega}\right)$ for all $\vphi \in \CCC(\overline\Omega)$.
In particular, there is no ambiguity, if we denote both operators by $\II_h$.
The set of nodes of the triangulation $\TT_h^{\omega}$ is denoted by $\NN_h^{\omega}$.
\par We recall that, under the constraint $\abs{\mmm}=1$, the strong form of~\eqref{eq:spin_llg} can equivalently be stated as
\begin{equation} \label{eq:spin_llg_alg}
\alpha \mmmt + \mmm \times \mmmt = \heff(\mmm) + c \sss - \left(\left(\heff(\mmm) + c \sss \right)  \cdot \mmm \right)\mmm.
\end{equation}
This formulation is used to construct the upcoming numerical scheme.
Since~\eqref{eq:spin_llg_alg} is linear in $\mmmt$, the main idea is to introduce an additional free variable $\vvv = \mmmt$.
To discretize $\vvv$, we introduce the discrete tangent space defined by
\begin{equation*}
\KK_{\pphi_h} := \left\{\ppsi_h \in \SS^1(\TT_h^{\omega})^3 : \ppsi_h(\zzz) \cdot \pphi_h(\zzz) = 0 \text{ for all } \zzz \in \NN_h^{\omega}\right\}
\end{equation*}
for any $\pphi_h \in \SS^1(\TT_h^{\omega})^3$.
Moreover, we consider the set
\begin{equation*}
\MM_h := \left\{\pphi_h \in \SS^1(\TT_h^{\omega})^3: \abs{\pphi_h(\zzz)}=1 \text{ for all } \zzz \in \NN_h^{\omega}\right\}.
\end{equation*}
These sets reflect two main properties of $\mmm$ and $\vvv$, namely the orthogonality $\mmm\cdot\vvv=0$ and the unit-length constraint $\abs{\mmm}=1$.
\par Let $\UU_h=\left\{\pphi_h \in \SS^1(\TT_h^{\omega})^3: \abs{\pphi_h(\zzz)} \geq 1 \text{ for all } \zzz \in \NN_h^{\omega}\right\}$.
We consider the nodal projection map $\Pi_h:\UU_h\to\MM_h$ defined by $\Pi_h\pphi_h(\zzz) = \pphi_h(\zzz)/\abs{\pphi_h(\zzz)}$ for all $\zzz \in \NN_h^{\omega}$ and $\pphi_h\in\UU_h$.
A simple argument based on the elementwise use of barycentric coordinates shows that $\norm{\Pi_h\pphi_h}{\LLL^\infty(\omega)} = 1$ for all $\pphi_h\in\UU_h$.
Moreover, we have the estimate
\begin{equation} \label{eq:energyDecay}
\norm{\nabla\Pi_h\pphi_h}{\LLL^2(\omega)} \leq c_{\Pi} \norm{\nabla\pphi_h}{\LLL^2(\omega)} \quad \text{for all } \pphi_h \in \UU_h,
\end{equation}
where the constant $c_{\Pi}>0$ depends only on the shape-regularity of the triangulation, cf., e.g.,~\cite[Lemma 2.2]{bartels2013}.
With an additional angle condition on $\TT_h^{\omega}$, it is well known that~\eqref{eq:energyDecay} holds even with $c_{\Pi}=1$, cf.~\cite{bartels2005}.
\par Let $\mmmh^0 \in \MM_h$ and $\sssh^0 \in \SS^1(\TT_h^{\Omega})^3$ be suitable approximations of the initial conditions.
Moreover, we consider a numerical realization $\ppih:\LLL^2(\omega) \to \LLL^2(\omega)$ of $\ppi$, which is assumed to fulfill a certain set of properties, see~(H2)--(H3) below.
This allows us to include the approximation errors, e.g., those which arise from the numerical computation of the stray field, into the overall convergence analysis.
For ease of presentation, we assume that $\fff$ and $\jjj$ are continuous in time, i.e., $\fff \in C(0,T;\LLL^2(\omega))$ and $\jjj \in C(0,T;\HHH^1(\Omega))$, so that the expressions $\fff^j=\fff(t_j)$ and $\jjj^j=\jjj(t_j)$ are meaningful for all $0 \leq j \leq N$.
It is even possible to replace $\fff^j$ and $\jjj^j$ by some numerical approximation $\fff^j_h$ and $\jjj_h^j$ as long as some weak convergence properties are fulfilled, cf.~\cite{bffgpprs2014}.
\par Analogously to what we have done in Section~\ref{sec:problem} for the continuous problem, for $0 \leq i \leq N-1$ we define the bilinear form $a_h^{i+1}:\SS^1(\TT_h^{\Omega})^3 \times \SS^1(\TT_h^{\Omega})^3 \to \R$ by
\begin{equation*}
\begin{split}
a_h^{i+1}(\zzeta_1,\zzeta_2)
&= \left( D_0 \nabla \zzeta_1, \nabla \zzeta_2\right)_{\Omega}
- \beta\beta' \left( D_0 \hmmmh^{i+1} \otimes \left(\nabla \zzeta_1\cdot \hmmmh^{i+1} \right), \nabla \zzeta_2 \right)_{\omega} \\
&\quad + \left( D_0 \zzeta_1, \zzeta_2 \right)_{\Omega}
+ \left( D_0\left(\zzeta_1 \times \hmmmh^{i+1}\right), \zzeta_2 \right)_{\omega}
\end{split}
\end{equation*}
for all $\zzeta_1,\zzeta_2 \in \SS^1(\TT_h^{\Omega})^3$.
For the numerical integration of the SDLLG system~\eqref{eq:spin_diff_llg_system}, we propose the following algorithm.
\begin{algorithm}\label{alg}
Input: $\mmmh^0$, $\sssh^0$, $\left\{ \fff^i \right\}_{0 \leq i \leq N-1}$, $\left\{ \jjj^{i+1} \right\}_{0 \leq i \leq N-1}$, parameter $0 \leq \theta \leq 1$.\\
For all $0 \leq i \leq N-1$ iterate:
\begin{itemize}
\item[(i)] compute $\vvvh^i \in \KK_{\mmmh^i}$ such that
\begin{subequations}
\begin{equation}\label{eq:alg:1}
\begin{split}
&\alpha\left( \vvvh^i,\pphi_h \right)_{\omega}
+\left( \mmmh^i \times \vvvh^i,\pphi_h \right)_{\omega}
+ \Ce \theta k \left( \nabla\vvvh^i, \nabla \pphi_h \right)_{\omega} \\
&\quad = -\Ce \left(\nabla\mmmh^i, \nabla\pphi_h \right)_{\omega}
+ \left( \ppih(\mmmh^i), \pphi_h \right)_{\omega}
+ \left( \fff^i, \pphi_h \right)_{\omega}
+ c \left( \sssh^i, \pphi_h \right)_{\omega}
\end{split}
\end{equation}
for all $\pphi_h \in \KK_{\mmmh^i}$;
\item[(ii)] define $\mmmh^{i+1} \in \SS^1(\TT_h)^3$ by
\begin{equation}\label{eq:alg:2}
\mmmh^{i+1} = \mmmh^i + k\vvvh^i;
\end{equation}
\item[(iii)] compute $\sssh^{i+1} \in \SS^1(\TT_h^{\Omega})^3$ such that
\begin{equation}\label{eq:alg:3}
\left( \dt \sssh^{i+1},\zzeta_h \right)_{\Omega}
+ a_h^{i+1}(\sssh^{i+1},\zzeta_h)
= \beta \left(\hmmmh^{i+1} \otimes \jjj^{i+1},\nabla\zzeta_h \right)_{\omega}
- \beta \left(\jjj^{i+1}\cdot\nnn,\hmmmh^{i+1}\cdot\zzeta_h\right)_{\partial\Omega\cap\partial\omega}
\end{equation}
\end{subequations}
for all $\zzeta_h \in \SS^1(\TT_h^{\Omega})^3$.
\end{itemize}
Output: Sequence of discrete functions $\left\{\left(\vvvh^i,\mmmh^{i+1},\sssh^{i+1}\right)\right\}_{0 \leq i \leq N-1}$.
\end{algorithm}
\noindent The overall system~\eqref{eq:spin_diff_llg_system} is a nonlinearly coupled system of a linear diffusion equation for $\sss$ with the nonlinear LLG equation for $\mmm$.
However, our scheme only requires the solution of two linear systems per time-step, since the treatment of the micromagnetic part and the spin diffusion part is completely decoupled for the time-integration.
This greatly simplifies an actual numerical implementation as well as the possible preconditioning of iterative solvers.
\begin{remark}
Unlike this work, earlier results on the tangent plane integrator~\cite{alouges2008a,akt2012,bffgpprs2014,bbp2008,bppr2013,lppt2013} define $\mmmh^{i+1} := \Pi_h(\mmmh^i+k\vvvh^i)$ in~\eqref{eq:alg:2}.
Unconditional convergence in the sense of Theorem~\ref{thm:convergence} can then be achieved with an additional angle condition on the triangulation $\TT_h^{\omega}$, which ensures~\eqref{eq:energyDecay} with $c_{\Pi}=1$.
This assumption is avoided in the present work.
\end{remark}
\noindent The following result follows from standard scaling arguments.
\begin{lemma}\label{lem:norm_equiv}
Let $\left\{\TT_h\right\}_{h>0}$ be a quasi-uniform family of triangulations of $\Omega$ and $r \in [1,\infty)$.
Then,
\begin{equation*}
C^{-1} \norm{w_h}{L^r(\Omega)}^r \leq h^3 \sum_{z \in \NN_h} \abs{w_h(z)}^r \leq C \norm{w_h}{L^r(\Omega)}^r
\quad \text{ for all } w_h \in \SS^1(\TT_h).
\end{equation*}
The constant $C>0$ depends only on $r$, but is independent of the meshsize $h$. \qed
\end{lemma}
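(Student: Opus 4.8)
This is a standard finite element scaling estimate, and the plan is to localize to a single element, exploit the equivalence of norms on the fixed, finite-dimensional space of affine functions on a reference simplex, and then reassemble using quasi-uniformity and shape-regularity.

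First I would fix the reference tetrahedron $\widehat K$ with vertices $\widehat z_0,\dots,\widehat z_3$ and observe that on the finite-dimensional space $\mathbb{P}_1(\widehat K)$ of affine functions $\widehat K\to\R$ both $\widehat w\mapsto\norm{\widehat w}{L^r(\widehat K)}$ and $\widehat w\mapsto\bigl(\sum_{j=0}^{3}\abs{\widehat w(\widehat z_j)}^r\bigr)^{1/r}$ are norms, the latter because an affine function on $\widehat K$ is uniquely determined by its four vertex values. By equivalence of norms on a finite-dimensional space there is $\widehat C=\widehat C(r)\geq 1$ with
\begin{equation*}
\widehat C^{-1}\sum_{j=0}^{3}\abs{\widehat w(\widehat z_j)}^r
\leq \norm{\widehat w}{L^r(\widehat K)}^r
\leq \widehat C\sum_{j=0}^{3}\abs{\widehat w(\widehat z_j)}^r
\qquad\text{for all }\widehat w\in\mathbb{P}_1(\widehat K).
\end{equation*}
Then, for each $K\in\TT_h$, I would let $F_K:\widehat K\to K$ be the affine bijection taking vertices to vertices and write $w_h|_K=\widehat w\circ F_K^{-1}$ with $\widehat w\in\mathbb{P}_1(\widehat K)$. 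Since $F_K$ maps the reference vertices onto the vertices of $K$ and $\norm{w_h}{L^r(K)}^r=\abs{\det DF_K}\,\norm{\widehat w}{L^r(\widehat K)}^r$, while $\abs{\det DF_K}=\abs{K}/\abs{\widehat K}$ is comparable to $h_K^3$ by shape-regularity and to $h^3$ by quasi-uniformity (with comparability constants independent of $K$ and $h$), the reference bound transfers to
\begin{equation*}
C^{-1}h^3\sum_{z\in\NN_h\cap K}\abs{w_h(z)}^r
\leq \norm{w_h}{L^r(K)}^r
\leq C\,h^3\sum_{z\in\NN_h\cap K}\abs{w_h(z)}^r,
\end{equation*}
with $C$ depending only on $r$ and the shape-regularity and quasi-uniformity constants.

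Finally I would sum this over $K\in\TT_h$. The left-hand sides add up to $\norm{w_h}{L^r(\Omega)}^r$, and on the right each node $z\in\NN_h$ is counted once for every element containing it; since this number is bounded by a constant $M$ independent of $h$ (again a consequence of shape-regularity), one obtains
\begin{equation*}
\sum_{z\in\NN_h}\abs{w_h(z)}^r
\leq \sum_{K\in\TT_h}\ \sum_{z\in\NN_h\cap K}\abs{w_h(z)}^r
\leq M\sum_{z\in\NN_h}\abs{w_h(z)}^r,
\end{equation*}
and combining the last three displays yields the claim with a constant $C=C(r)>0$. There is no genuine obstacle here; the only point requiring (routine) care is the bookkeeping of constants, namely using quasi-uniformity to replace the local $h_K$ by the global $h$ and to bound $\abs{\det DF_K}$ from above and below by fixed multiples of $h_K^3$, and using shape-regularity to bound the vertex multiplicity $M$, so that the resulting constant is genuinely independent of $h$.
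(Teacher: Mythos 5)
Your proof is correct and is precisely the ``standard scaling argument'' that the paper invokes without spelling out: equivalence of norms on $\mathbb{P}_1(\widehat K)$, transport to each $K\in\TT_h$ via the affine map with $\abs{\det DF_K}\simeq h_K^3\simeq h^3$ by shape-regularity and quasi-uniformity, and summation with bounded vertex multiplicity. Nothing to add.
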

\noindent The following proposition states that the above algorithm is well defined, cf.~\cite[Proposition~3.1 and Proposition~4.1]{bartels2013} for corresponding results in the frame of harmonic maps and the harmonic map heat flow.
\begin{proposition} \label{prop:discrete_wellposedness}
Algorithm~\ref{alg} is well defined in the following sense: For each time-step $0 \leq i \leq N-1$, there exists a unique solution $(\vvvh^i, \mmmh^{i+1}, \sssh^{i+1})$.
Moreover, it holds
\begin{equation} \label{eq:nodewise}
\abs{\mmmh^{i+1}(\zzz)}^2
= 1 + k^2 \sum_{\ell=0}^i \abs{\vvv_h^{\ell}(\zzz)}^2 \geq 1 \quad \text{for all } \zzz \in \NN_h^{\omega},
\end{equation}
as well as
\begin{equation} \label{eq:boundedness_mh}
C_*^{-1}\norm{\mmm_h^{i+1}}{\LLL^2(\omega)}^2
\leq 1 + k^2 \sum_{\ell=0}^i \norm{\vvvh^{\ell}}{\LLL^2(\omega)}^2,
\end{equation}
where the constant $C_*>0$ depends only on the shape-regularity of $\left\{\TT_h^{\omega}\right\}_{h>0}$, but is independent of $h$ and $k$.
\end{proposition}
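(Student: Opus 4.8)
The plan is to argue by induction on the time-step index $i\in\{0,\dots,N-1\}$, handling the three substeps of Algorithm~\ref{alg} in turn. At the start of step $i$ we have $\mmmh^i\in\SS^1(\TT_h^{\omega})^3$ and $\sssh^i\in\SS^1(\TT_h^{\Omega})^3$ from the previous step (for $i=0$ these are the prescribed inputs, with $\mmmh^0\in\MM_h$). For substep~(i), I would note that the bilinear form on the left-hand side of~\eqref{eq:alg:1}, restricted to $\KK_{\mmmh^i}\times\KK_{\mmmh^i}$, is positive definite: testing with $\pphi_h=\ppsi_h$, the term $(\mmmh^i\times\ppsi_h,\ppsi_h)_{\omega}$ vanishes pointwise, leaving $\alpha\norm{\ppsi_h}{\LLL^2(\omega)}^2+\Ce\theta k\norm{\nabla\ppsi_h}{\LLL^2(\omega)}^2\geq\alpha\norm{\ppsi_h}{\LLL^2(\omega)}^2$, which is strictly positive for $\ppsi_h\neq\zero$ since $\alpha>0$ (so coercivity survives even when $\theta=0$). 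As $\KK_{\mmmh^i}$ is finite-dimensional, the right-hand side of~\eqref{eq:alg:1} is automatically a bounded linear functional on it, and the Lax--Milgram lemma (equivalently, injectivity implies bijectivity in finite dimensions) gives a unique $\vvvh^i\in\KK_{\mmmh^i}$.

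Substep~(ii) is explicit, and it is here that~\eqref{eq:nodewise} falls out. Since $\vvvh^i\in\KK_{\mmmh^i}$ means $\vvvh^i(\zzz)\cdot\mmmh^i(\zzz)=0$ for every $\zzz\in\NN_h^{\omega}$, expanding $\abs{\mmmh^{i+1}(\zzz)}^2=\abs{\mmmh^i(\zzz)+k\vvvh^i(\zzz)}^2$ annihilates the mixed term and yields $\abs{\mmmh^{i+1}(\zzz)}^2=\abs{\mmmh^i(\zzz)}^2+k^2\abs{\vvvh^i(\zzz)}^2$; combining with the induction hypothesis $\abs{\mmmh^i(\zzz)}^2=1+k^2\sum_{\ell=0}^{i-1}\abs{\vvvh^\ell(\zzz)}^2$ (the empty sum at $i=0$ encoding $\mmmh^0\in\MM_h$) closes the induction for~\eqref{eq:nodewise} and in particular gives $\abs{\mmmh^{i+1}(\zzz)}\geq 1$, i.e., $\mmmh^{i+1}\in\UU_h$.

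This last membership is precisely what makes substep~(iii) well posed. Since $\mmmh^{i+1}\in\UU_h$, the nodal projection $\hmmmh^{i+1}=\Pi_h\mmmh^{i+1}$ is defined and satisfies $\norm{\hmmmh^{i+1}}{\LLL^\infty(\omega)}=1$, so the argument used in the proof of Lemma~\ref{lem:coercivity} applies verbatim and gives $a_h^{i+1}(\zzeta_h,\zzeta_h)\geq(1-\beta\beta')D_*\norm{\zzeta_h}{\HHH^1(\Omega)}^2\geq 0$ for all $\zzeta_h\in\SS^1(\TT_h^{\Omega})^3$. Consequently the bilinear form $k^{-1}(\cdot,\cdot)_{\Omega}+a_h^{i+1}(\cdot,\cdot)$ underlying~\eqref{eq:alg:3} is positive definite; its right-hand side is a bounded linear functional on the finite-dimensional space $\SS^1(\TT_h^{\Omega})^3$ (the boundary contribution is meaningful because $\jjj^{i+1}\in\HHH^1(\Omega)$ and $\hmmmh^{i+1},\zzeta_h$ are continuous), and Lax--Milgram produces a unique $\sssh^{i+1}$. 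This completes the induction and hence the well-posedness claim together with~\eqref{eq:nodewise}.

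It remains to derive~\eqref{eq:boundedness_mh}. I would sum~\eqref{eq:nodewise} over $\zzz\in\NN_h^{\omega}$ and multiply by $h^3$: by Lemma~\ref{lem:norm_equiv} with $r=2$, the left-hand side $h^3\sum_{\zzz}\abs{\mmmh^{i+1}(\zzz)}^2$ is bounded below by $C^{-1}\norm{\mmmh^{i+1}}{\LLL^2(\omega)}^2$, while on the right-hand side $h^3\abs{\NN_h^{\omega}}\lesssim\abs{\omega}\lesssim 1$ (apply the same lemma to the constant function $1$) and $h^3\sum_{\zzz}\abs{\vvvh^\ell(\zzz)}^2\lesssim\norm{\vvvh^\ell}{\LLL^2(\omega)}^2$ for each $\ell$. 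Collecting the constants, all of which depend only on shape-regularity (and on the fixed volume $\abs{\omega}$), yields~\eqref{eq:boundedness_mh}. I do not anticipate a real obstacle here; the only point that needs care is the ordering of the induction, namely that the lower bound $\abs{\mmmh^{i+1}(\zzz)}\geq 1$ of~\eqref{eq:nodewise} must be secured before substep~(iii), as it is what guarantees $\mmmh^{i+1}\in\UU_h$ and thereby both the definition of $\hmmmh^{i+1}$ and the coercivity of $a_h^{i+1}$.
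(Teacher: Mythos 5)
Your proof is correct and follows essentially the same route as the paper's: positive definiteness of the finite-dimensional bilinear forms for unique solvability of substeps (i) and (iii), nodewise orthogonality plus induction for \eqref{eq:nodewise}, and the discrete norm equivalence of Lemma~\ref{lem:norm_equiv} with $r=2$ for \eqref{eq:boundedness_mh}. Your explicit remark that \eqref{eq:nodewise} must be secured before substep~(iii)---since it is what places $\mmmh^{i+1}$ in $\UU_h$ and thereby makes $\hmmmh^{i+1}$ meaningful and $a_h^{i+1}$ coercive---is also exactly the point the paper makes, just stated a bit more emphatically.
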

\begin{proof}
Let $0 \leq i \leq N-1$.
For step~(i) of the algorithm, it is straightforward to show that problem~\eqref{eq:alg:1} is characterized by a positive definite bilinear form.
Unique solvability thus follows from linearity and finite space dimension.
Step~(ii) is clearly well defined.
For all $\zzz \in \NN_h$, the nodewise orthogonality from $\KK_{\mmmh^i}$ proves
\begin{equation*}
\abs{\mmmh^{i+1}(\zzz)}^2
= \abs{\mmmh^i(\zzz) + k \vvv_h^i(\zzz)}^2
= \abs{\mmmh^i(\zzz)}^2 + k^2 \abs{\vvv_h^i(\zzz)}^2.
\end{equation*}
Since $\mmmh^0 \in \MM_h$, mathematical induction proves
\begin{equation*}
\abs{\mmmh^{i+1}(\zzz)}^2
= \abs{\mmmh^0(\zzz)}^2 + k^2 \sum_{\ell=0}^i \abs{\vvv_h^{\ell}(\zzz)}^2
= 1 + k^2 \sum_{\ell=0}^i \abs{\vvv_h^{\ell}(\zzz)}^2  \geq 1.
\end{equation*}
This proves~\eqref{eq:nodewise}.
The norm equivalence from Lemma~\ref{lem:norm_equiv} in the case $r=2$ yields
\begin{equation*}
\begin{split}
\norm{\mmm_h^{i+1}}{\LLL^2(\omega)}^2
& \lesssim h^3 \sum_{\zzz \in \NN_h^{\omega}} \abs{\mmm_h^{i+1}(\zzz)}^2
= h^3 \sum_{\zzz \in \NN_h^{\omega}} \left(1+k^2\sum_{\ell=0}^i\abs{\vvv_h^{\ell}(\zzz)}^2 \right) \\
& = h^3 (\#\NN_h^{\omega}) + k^2 \sum_{\ell=0}^i h^3 \sum_{\zzz \in \NN_h^{\omega}}\abs{\vvv_h^{\ell}(\zzz)}^2
\lesssim 1 + k^2 \sum_{\ell=0}^i \norm{\vvvh^{\ell}}{\LLL^2(\omega)}^2.
\end{split}
\end{equation*}
This establishes~\eqref{eq:boundedness_mh}.
For step~(iii), we use the same argument as for step~(i).
Due to~\eqref{eq:nodewise}, the nodewise projections in~\eqref{eq:alg:3} are well defined.
Let $b_h^i:\SS^1(\TT_h^{\Omega})^3 \times \SS^1(\TT_h^{\Omega})^3 \to \R$ be the bilinear form associated to problem~\eqref{eq:alg:3}, i.e.,
\begin{equation*}
\begin{split}
b_h^i(\zzeta_1,\zzeta_2)
&=\frac{1}{k} \left( \zzeta_1, \zzeta_2 \right)_{\Omega}
+ \left(D_0 \nabla \zzeta_1, \nabla \zzeta_2  \right)_{\Omega}
- \beta\beta'\left( D_0 \hmmmh^{i+1} \otimes \left(\nabla \zzeta_1\cdot \hmmmh^{i+1} \right), \nabla \zzeta_2  \right)_{\Omega} \\
&\quad + \left(D_0 \zzeta_1 , \zzeta_2  \right)_{\Omega}
+ \left(D_0\left(\zzeta_1 \times \hmmmh^{i+1}\right), \zzeta_2  \right)_{\Omega}.
\end{split}
\end{equation*}
Since $\norm{\hmmmh^{i+1}}{\LLL^\infty(\omega)}=1$, we see
\begin{equation*}
\left(D_0 \hmmmh^{i+1} \otimes \left(\nabla \zzeta_1\cdot \hmmmh^{i+1} \right) , \nabla \zzeta_2  \right)_{\Omega}
\leq \left(D_0 \abs{\nabla\zzeta_1}, \abs{\nabla\zzeta_2}\right)_{\Omega}.
\end{equation*}
It follows that
\begin{equation*}
\begin{split}
b_h^i(\zzeta,\zzeta)
&\geq\frac{1}{k} \left\Vert\zzeta\right\Vert_{\LLL^2(\Omega)}^2
+ (1 - \beta\beta') \left(D_0 \nabla\zzeta,\nabla\zzeta \right)_{\Omega}
+ \left(D_0 \zzeta,\zzeta \right)_{\Omega} \\
&\geq \frac{1+kD_*}{k} \norm{\zzeta}{\LLL^2(\Omega)}^2
+ D_* (1 - \beta\beta') \norm{\nabla\zzeta}{\LLL^2(\Omega)}^2.
\end{split}
\end{equation*}
As $0<\beta\beta'<1$ and $D_*>0$, $b_h^i(\cdot,\cdot)$ is positive definite and problem~\eqref{eq:alg:3} is thus well posed.
\end{proof}
\section{Convergence analysis}
\label{sec:convergence}
\noindent In this section, we consider the convergence properties of Algorithm~\ref{alg} and show that it is indeed unconditionally convergent towards a weak solution of SDLLG in the sense of Definition~\ref{def:weak_sol}.
We emphasize that the proof is constructive in the sense that it even shows existence of weak solutions.
We start by collecting some general assumptions:
\begin{itemize}
\item[(H1)] The discrete initial data $\mmm^0 \in \MM_h$ and $\sss^0 \in \SS^1(\TT_h^{\omega})^3$ satisfy
\begin{equation*}
\mmmh^0 \weakto \mmm^0 \text{ in } \HHH^1(\omega) \quad \text{ and } \quad
\sssh^0 \weakto \sss^0 \text{ in } \LLL^2(\Omega).
\end{equation*}
\item[(H2)] The general field contribution $\ppih$ is bounded, i.e.,
\begin{equation*}
\norm{\ppih(\www)}{\LLL^2(\omega)} \leq C_{\ppi}\norm{\www}{\LLL^2(\omega)} \quad \text{ for all } \www \in \LLL^2(\omega),
\end{equation*}
with a constant $C_{\ppi} > 0$ which depends only on $\abs{\omega}$.
\item[(H3)] It holds
\begin{equation*}
\ppih(\www_{hk}) \weakto \ppi(\www) \text{ in } \LLL^2(\omega_T) \quad \text{ as } (h,k) \to 0
\end{equation*}
for any sequence $\www_{hk} \to \www$ in $\LLL^2(\omega_T)$.
\end{itemize}
\begin{remark} \label{rem:discrete_pi}
Usual stray field discretizations by hybrid FEM-BEM methods, e.g., the Fredkin-Koehler approach from~\cite{fk1990}, or FEM-BEM coupling methods satisfy~{\rm{(H2)--(H3)}}, see~\cite{bffgpprs2014}.
\end{remark}
\begin{remark}
For a discrete operator $\ppih:\HHH^1(\omega) \to \LLL^2(\omega)$, assumption~{\rm{(H2)}} can be relaxed to
\begin{equation*}
\norm{\ppih(\www)}{\LLL^2(\omega)} \leq C_{\ppi}\norm{\www}{\HHH^1(\omega)} \quad \text{ for all } \www \in \HHH^1(\omega).
\end{equation*}
Within this setting, and with an appropriate modification of assumption~{\rm{(H3)}}, the hybrid FEM-BEM method from~\cite{gcr2006} for the computation of the stray field can also be included into our analysis.
Then, the proof of Proposition~\ref{lem:discrete_stability2} below becomes more technical, but the assertion remains true. We refer to the argument of~\cite{bffgpprs2014} which can be adapted accordingly.
\end{remark}
\noindent From now on, we consider the time-approximations $\mmmhk$, $\mmmhk^{\pm}$, $\ssshk$, $\ssshk^{\pm}$ defined by~\eqref{eq:time_approx}.
The next theorem is the main result of this work.
\begin{theorem}\label{thm:convergence}
Let $\left\{\TT_h^{\Omega}\right\}_{h>0}$ be a shape-regular and quasi-uniform family of triangulations.
\begin{itemize}
\item[(a)] Suppose $1/2 < \theta \leq 1$ and that assumptions~{\rm{(H1)--(H2)}} are satisfied.\\
Then, there exist $\mmm\in\LLL^2(\omega_T)$ and $\sss\in L^2(0,T;\HHH^1(\Omega))$ such that
\begin{equation*}
\mmmhk^- \subto \mmm \text{ in } \LLL^2(\omega_T) \quad \text{ and } \quad
\ssshk^- \subweakto \sss \text{ in } L^2(0,T;\HHH^1(\Omega)).
\end{equation*}
\item[(b)] In addition to the above, let assumption~{\rm{(H3)}} be satisfied.
Then, it holds
\begin{equation*}
(\mmmhk,\ssshk) \subweakto (\mmm,\sss) \text{ in } \HHH^1(\omega_T) \times \left[L^2(0,T;\HHH^1(\Omega)) \cap H^1(0,T;\wtd\HHH^{-1}(\Omega))\right],
\end{equation*}
where $(\mmm,\sss)$ is a weak solution of SDLLG.
\end{itemize}
\end{theorem}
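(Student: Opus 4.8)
The plan is to follow the now-standard Alouges-type compactness argument for tangent plane schemes, adapted to the coupled system, in three stages: (1) derive $(h,k)$-uniform \emph{a priori} bounds for the discrete quantities $\vvvh^i$, $\mmmhk$, $\ssshk$; (2) extract weakly/strongly convergent subsequences and identify the limits; (3) pass to the limit in the variational identities \eqref{eq:alg:1} and \eqref{eq:alg:3} to recover \eqref{eq:weak_spin_llg} and \eqref{eq:weak_spin_diff}, and verify the side constraints in Definition~\ref{def:weak_sol}. The crucial point distinguishing this from the Maxwell/eddy-current cases is that the spin diffusion bilinear form $a_h^{i+1}$ depends on $\hmmmh^{i+1}$, so the energy bound for $\sss$ feeds back into the bound for $\mmm$ via the term $c(\sssh^i,\pphi_h)_\omega$ in \eqref{eq:alg:1}, and one must close the two estimates simultaneously.

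\textbf{Step 1: energy estimates.} For the LLG part, test \eqref{eq:alg:1} with $\pphi_h=\vvvh^i\in\KK_{\mmmh^i}$. The cross-product term drops, the $\theta k$-term is nonnegative, and the key identity $\norm{\nabla\mmmh^{i+1}}{\LLL^2(\omega)}^2=\norm{\nabla\mmmh^i}{\LLL^2(\omega)}^2+2k(\nabla\mmmh^i,\nabla\vvvh^i)_\omega+k^2\norm{\nabla\vvvh^i}{\LLL^2(\omega)}^2$ lets one telescope; the condition $\theta>1/2$ is exactly what absorbs the $k^2\norm{\nabla\vvvh^i}{\LLL^2}^2$ term with a negative sign. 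Using (H2) to bound $\ppih(\mmmh^i)$, Young's inequality on the $\fff^i$ and $c\sssh^i$ terms, and \eqref{eq:boundedness_mh} to control $\norm{\mmmh^i}{\LLL^2}$, one gets after summation over $i$ and a discrete Gronwall argument
\begin{equation*}
\max_{0\le j\le N}\norm{\nabla\mmmh^j}{\LLL^2(\omega)}^2 + k\sum_{i=0}^{N-1}\norm{\vvvh^i}{\LLL^2(\omega)}^2 + \theta k^2\sum_{i=0}^{N-1}\norm{\nabla\vvvh^i}{\LLL^2(\omega)}^2 \le C\Big(1 + k\sum_{i=0}^{N-1}\norm{\sssh^i}{\LLL^2(\omega)}^2\Big),
\end{equation*}
with $C$ independent of $h,k$. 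For the spin part, test \eqref{eq:alg:3} with $\zzeta_h=\sssh^{i+1}$ and use the discrete coercivity established as in Lemma~\ref{lem:coercivity} (the bound $\norm{\hmmmh^{i+1}}{\LLL^\infty(\omega)}=1$ is what makes the $\beta\beta'$ term harmless); the trace term on $\partial\Omega\cap\partial\omega$ is handled by a trace inequality and the $\HHH^1(\Omega)$-regularity of $\jjj^{i+1}$, absorbed into the coercivity. Telescoping the $(\dt\sssh^{i+1},\sssh^{i+1})_\Omega=\tfrac1{2k}(\norm{\sssh^{i+1}}{\LLL^2}^2-\norm{\sssh^i}{\LLL^2}^2+\norm{\sssh^{i+1}-\sssh^i}{\LLL^2}^2)$ term and another discrete Gronwall give
\begin{equation*}
\max_{0\le j\le N}\norm{\sssh^j}{\LLL^2(\Omega)}^2 + k\sum_{i=0}^{N-1}\norm{\sssh^{i+1}}{\HHH^1(\Omega)}^2 \le C,
\end{equation*}
uniformly in $h,k$. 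Feeding this back closes the LLG estimate. Translating into the interpolants $\mmmhk,\ssshk$ (and using \eqref{eq:energyDecay} is \emph{not} needed here since no projection is performed, which is the simplification advertised in the introduction), one obtains: $\mmmhk$ bounded in $\HHH^1(\omega_T)$ with $\mmmhk,\mmmhk^\pm$ bounded in $L^\infty(0,T;\HHH^1(\omega))$; $\ssshk^\pm$ bounded in $L^2(0,T;\HHH^1(\Omega))\cap L^\infty(0,T;\LLL^2(\Omega))$. For $\partial_t\sss$, one reads off from \eqref{eq:alg:3} that $\partial_t\ssshk$ is bounded in $L^2(0,T;\wtd\HHH^{-1}(\Omega))$.

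\textbf{Step 2: compactness and limit identification.} By reflexivity extract a subsequence with $\mmmhk\subweakto\mmm$ in $\HHH^1(\omega_T)$ and $\ssshk\subweakto\sss$ in $L^2(0,T;\HHH^1(\Omega))$ with $\partial_t\ssshk\subweakto\partial_t\sss$ in $L^2(0,T;\wtd\HHH^{-1})$. The Rellich--Kondrachov/Aubin--Lions lemma upgrades these to $\mmmhk\to\mmm$ in $\LLL^2(\omega_T)$ and $\ssshk\to\sss$ in $L^2(\Omega_T)$; moreover $\norm{\mmmhk-\mmmhk^\pm}{\LLL^2(\omega_T)}\lesssim k\norm{\vvvh\cdot}{\ldots}\to0$ and similarly for $\ssshk-\ssshk^\pm$, so all three time-interpolants share the same limit. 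Part~(a) is then immediate. The unit-length constraint $\abs{\mmm}=1$ a.e.\ follows from \eqref{eq:nodewise}: $\norm{\abs{\hmmmh^{j}}-1}{}$ and $\norm{\abs{\mmmhk^-}-1}{}$ tend to $0$ because $k^2\sum\abs{\vvvh^\ell(\zzz)}^2$ is controlled by $k\cdot(k\sum\norm{\vvvh^\ell}{\LLL^2}^2)\lesssim k\to0$ after the norm equivalence of Lemma~\ref{lem:norm_equiv}; hence $\Pi_h\mmmh^j$ and $\mmmh^j$ have the same limit $\mmm$ with $\abs{\mmm}=1$. The initial and trace conditions $\mmm(0)=\mmm^0$, $\sss(0)=\sss^0$ come from (H1) together with weak continuity of the trace operators. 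The orthogonality $\mmm\cdot\mmmt=0$ a.e.\ is automatic from $\abs{\mmm}=1$ and $\mmm\in\HHH^1$.

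\textbf{Step 3: passage to the limit in the equations.} For \eqref{eq:weak_spin_llg}, fix $\vphi\in C^\infty(\overline{\omega_T})$, set $\vphi_h(t):=\II_h\vphi(t)$, and use $\pphi_h:=\II_h(\vphi_h\times\mmmh^-_{hk})$-type test functions — more precisely, the standard trick is to test \eqref{eq:alg:1} with $\II_h(\mmmh^i\times\vphi(t_i))\in\KK_{\mmmh^i}$, integrate over $[t_i,t_{i+1})$, sum, and observe that the resulting reformulation reproduces all four terms on the right of \eqref{eq:weak_spin_llg} in the limit. One uses: strong $\LLL^2(\omega_T)$ convergence of $\mmmhk^-$; weak $\LLL^2(\omega_T)$ convergence of $\vvvh=\partial_t\mmmhk$; the bound $\theta k^2\sum\norm{\nabla\vvvh^i}{\LLL^2}^2\le C$ to kill the $\Ce\theta k(\nabla\vvvh^i,\nabla\pphi_h)$ term (it is $O(\sqrt k)$); assumption (H3) for the $\ppih(\mmmhk^-)$ term; and the product rule $\nabla(\mmmh^i\times\vphi)=\nabla\mmmh^i\times\vphi+\mmmh^i\times\nabla\vphi$ together with standard interpolation estimates $\norm{\vphi-\II_h\vphi}{}+h\norm{\nabla(\vphi-\II_h\vphi)}{}\lesssim h^2$. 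For \eqref{eq:weak_spin_diff}, fix $\zzeta\in\HHH^1(\Omega)$ (time-independent suffices by density), test \eqref{eq:alg:3} with $\II_h\zzeta$, multiply by a scalar test function $\psi(t)\in C^\infty_c(0,T)$ and integrate: the $(\dt\ssshk^{+},\II_h\zzeta)$ term converges to $\langle\partial_t\sss,\zzeta\rangle$ by the weak $\wtd\HHH^{-1}$ convergence; the bilinear form $a_h^{i+1}(\ssshk^+,\II_h\zzeta)$ converges to $\int a(t,\sss,\zzeta)$ because $\hmmmh^+\to\mmm$ strongly in $\LLL^2(\omega_T)$ (and is uniformly $\LLL^\infty$-bounded, so by dominated convergence the products $\hmmmh^+\otimes(\nabla\ssshk^+\cdot\hmmmh^+)$ converge weakly against $\nabla\II_h\zzeta\to\nabla\zzeta$), using $\ssshk^+\subweakto\sss$ in $L^2(0,T;\HHH^1)$; the right-hand side converges since $\jjj^{+}\to\jjj$ in $L^2(0,T;\HHH^1(\Omega))$ and $\hmmmh^+\to\mmm$ in $\LLL^2(\omega_T)$, with the trace term handled by the trace theorem. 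Localizing in $t$ recovers the pointwise-in-$t$ identity \eqref{eq:weak_spin_diff} for a.e.\ $t$.

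\textbf{Main obstacle.} The delicate point is the limit of the quasilinear term $\beta\beta'\,(D_0\,\hmmmh^{i+1}\otimes(\nabla\ssshk^{+}\cdot\hmmmh^{i+1}),\nabla\II_h\zzeta)$: here a \emph{product of two factors converging only weakly} ($\nabla\ssshk^+\subweakto\nabla\sss$) with two converging strongly ($\hmmmh^+\to\mmm$ in $\LLL^2$, hence — after passing to a further subsequence — a.e., and $\LLL^\infty$-bounded) must be shown to converge to $\beta\beta'(D_0\,\mmm\otimes(\nabla\sss\cdot\mmm),\nabla\zzeta)$. This is resolved by writing $\hmmmh^+\otimes(\nabla\ssshk^+\cdot\hmmmh^+)\cdot\nabla\II_h\zzeta = (\nabla\ssshk^+\cdot\hmmmh^+)\cdot(\hmmmh^+\cdot\nabla\II_h\zzeta)$ — schematically — and noting that $\hmmmh^+\cdot\nabla\II_h\zzeta\to\mmm\cdot\nabla\zzeta$ \emph{strongly} in $\LLL^2$ (strong $\times$ strong $\to$ strong, since $\hmmmh^+$ is bounded in $\LLL^\infty$ and converges a.e., and $\nabla\II_h\zzeta\to\nabla\zzeta$ in $\LLL^2$), while $\nabla\ssshk^+\cdot\hmmmh^+\subweakto\nabla\sss\cdot\mmm$ weakly in $\LLL^2$ by the same strong/weak pairing; a weak-times-strong product then passes to the limit. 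The coercivity of Lemma~\ref{lem:coercivity} guarantees this term does not destroy the \emph{a priori} bound in Step~1, so the argument is self-consistent.
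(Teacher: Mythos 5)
Your proposal is essentially the paper's own proof: a priori bounds first (discrete energy for the LLG part, coercivity plus Abel summation for the spin part), then compactness, then passage to the limit via the tangent-space test functions $\II_h(\mmmh^i\times\vphi(t_i))$ and $\II_h\vphi(t)$. You correctly identify the two structural points that make SDLLG different from the Maxwell/eddy-current couplings, namely that $\|\Pi_h\mmmh^{i+1}\|_{\LLL^\infty(\omega)}=1$ decouples the $\sss$-estimate from any bound on $\mmm$, and that the quasilinear term is handled by rearranging $\Pi_h\mmmhk^+\otimes(\nabla\ssshk^+\cdot\Pi_h\mmmhk^+):\nabla\II_h\vphi$ into a weak-times-strong pairing. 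You also correctly note that dropping the nodal projection from step~(ii) removes the need for the angle condition.

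Two small deviations, neither of which invalidates the argument. First, where you invoke a ``discrete Gronwall'' to close the LLG estimate, the paper instead absorbs the term arising from $\|\mmmh^i\|_{\LLL^2}^2\lesssim 1+k^2\sum_\ell\|\vvvh^\ell\|_{\LLL^2}^2$ directly, at the cost of a restriction $k<k_0$ with $k_0$ depending only on the data; this restriction involves $k$ alone, so it does not contradict unconditionality, but you should make explicit that some such smallness enters (your bookkeeping hides it). Second, you use Aubin--Lions to get strong $\LLL^2(\Omega_T)$ convergence of $\ssshk$; the paper does not need this — only weak $L^2(0,T;\HHH^1)$ convergence of $\ssshk^+$ enters the limit passage, precisely because the strong/weak pairing you describe in the ``main obstacle'' paragraph already closes the quasilinear term. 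Finally, a technical nit: you write ``fix $\zzeta\in\HHH^1(\Omega)$'' and then form $\II_h\zzeta$; nodal interpolation requires continuity, so one must first take $\zzeta\in\CCC^\infty(\overline\Omega)$ (or $C^\infty(0,T;\CCC^\infty(\overline\Omega))$ as the paper does) and conclude by density.
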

\begin{remark}
In particular, Theorem \ref{thm:convergence} yields existence of weak solutions, and each accumulation point of $(\mmmhk, \ssshk)$ is a weak solution of SDLLG in the sense of Definition~\ref{def:weak_sol}.
\end{remark}
\noindent The proof of Theorem~\ref{thm:convergence} will roughly be done in three steps, namely
\begin{enumerate}
\item[(i)] boundedness of the discrete quantities and energies,
\item[(ii)] existence of weakly convergent subsequences via compactness,
\item[(iii)] identification of the limits with weak solutions of SDLLG.
\end{enumerate}
For the sake of readability, we split our argument into several lemmata.
\par To start with, we recall the following result, which states a well-known and simple algebraic trick which often simplifies the computation and the estimation of sums.
\begin{lemma}[Abel's summation by parts] \label{lem:abel}
Let $X$ be a vector space endowed with a symmetric bilinear form $(\cdot,\cdot)$.
Given an integer $j \geq 1$, let $\left\{v_i\right\}_{0\leq i \leq j} \subset X$.
Then, it holds
\begin{equation*}
\sum_{i=0}^{j-1} (v_{i+1}-v_i,v_{i+1})
= \frac{1}{2} \left(v_j,v_j \right)
- \frac{1}{2} \left(v_0,v_0 \right)
+ \frac{1}{2} \sum_{i=0}^{j-1} \left(v_{i+1}-v_i,v_{i+1}-v_i \right). \tag*{\qed}
\end{equation*}
\end{lemma}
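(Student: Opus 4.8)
The plan is to reduce the claimed identity to a single termwise polarization identity followed by a telescoping sum. First, I would record the elementary fact that, for any $a,b \in X$, symmetry of $(\cdot,\cdot)$ yields
\[
2(a-b,a) = (a,a) - (b,b) + (a-b,a-b),
\]
which follows at once by expanding $(a-b,a-b) = (a,a) - 2(a,b) + (b,b)$ and rearranging; symmetry is used exactly once here, to merge $(a,b)$ with $(b,a)$.

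Next, I would apply this with $a = v_{i+1}$ and $b = v_i$ for each $0 \le i \le j-1$, giving
\[
2(v_{i+1}-v_i,v_{i+1}) = (v_{i+1},v_{i+1}) - (v_i,v_i) + (v_{i+1}-v_i,v_{i+1}-v_i).
\]
Summing over $i$ from $0$ to $j-1$, the differences $(v_{i+1},v_{i+1}) - (v_i,v_i)$ telescope to $(v_j,v_j) - (v_0,v_0)$, while the remaining sum of squared increments carries over unchanged. Dividing by $2$ then produces the stated formula.

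There is no substantial obstacle here: the assertion is purely algebraic and does not rely on any topological or positivity property of $X$ or of $(\cdot,\cdot)$. The only point meriting attention is that symmetry of the bilinear form --- and nothing more --- is what the argument actually uses, which matters because the lemma is later invoked for forms that need not be positive definite; this is guaranteed by the single application of the polarization identity above.
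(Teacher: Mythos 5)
Your proof is correct, and the paper in fact offers no proof of this lemma at all (the displayed identity is simply followed by \qed), so there is nothing to compare against; your polarization-plus-telescoping argument is the standard one and exactly what the authors leave to the reader. It is worth noting that this same polarization identity also appears explicitly later in the paper's proof of Lemma~\ref{lem:new_stability}, in the form $2(\aaa+\bbb)\cdot\aaa = \abs{\aaa}^2 + \abs{\aaa+\bbb}^2 - \abs{\bbb}^2$, so your observation that symmetry of the bilinear form is the only structural ingredient is consistent with how the authors use the tool elsewhere.
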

\noindent The first ingredient for step (i) is the following proposition.
\begin{proposition}  \label{lem:discrete_stability1}
Let $1\leq j \leq N$ and suppose that the assumptions of Theorem~\ref{thm:convergence}{\rm{(a)}} are satisfied.
Then, the discrete functions $\left\{\sssh^{i+1}\right\}_{0\leq i \leq j-1}$ obtained through Algorithm~\ref{alg} fulfill
\begin{equation} \label{eq:stability1}
\norm{\sssh^j}{\LLL^2(\Omega)}^2
+ k \sum_{i=0}^{j-1}\norm{\sssh^{i+1}}{\HHH^1(\Omega)}^2
+ \sum_{i=0}^{j-1}\norm{\sssh^{i+1}-\sssh^i}{\LLL^2(\Omega)}^2
\leq C.
\end{equation}
The constant $C>0$ depends only on the data, but is in particular independent of the discretization parameters $h$ and $k$.
\end{proposition}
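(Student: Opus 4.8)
The plan is to derive the stability estimate~\eqref{eq:stability1} by testing the discrete spin-diffusion equation~\eqref{eq:alg:3} with $\zzeta_h = \sssh^{i+1}$, summing over $i$ from $0$ to $j-1$, and absorbing the troublesome terms via the coercivity of $a_h^{i+1}$ and Young's inequality. First I would recall the elementary identity $(\dt\sssh^{i+1},\sssh^{i+1})_\Omega = \tfrac{1}{2k}\big(\norm{\sssh^{i+1}}{\LLL^2(\Omega)}^2 - \norm{\sssh^i}{\LLL^2(\Omega)}^2 + \norm{\sssh^{i+1}-\sssh^i}{\LLL^2(\Omega)}^2\big)$, which is precisely Abel's summation by parts (Lemma~\ref{lem:abel}) applied termwise, and which produces the three quantities appearing on the left-hand side of~\eqref{eq:stability1} once multiplied by $k$ and summed. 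For the bilinear form, the same computation as in the proof of Proposition~\ref{prop:discrete_wellposedness} — using $\norm{\hmmmh^{i+1}}{\LLL^\infty(\omega)}=1$ and $0<\beta\beta'<1$ — gives $a_h^{i+1}(\sssh^{i+1},\sssh^{i+1}) \geq (1-\beta\beta')D_*\,\norm{\sssh^{i+1}}{\HHH^1(\Omega)}^2$, exactly the discrete analogue of Lemma~\ref{lem:coercivity}; note the skew-symmetric cross-product term drops out. This furnishes the $k\sum_i\norm{\sssh^{i+1}}{\HHH^1(\Omega)}^2$ contribution.

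Next I would bound the right-hand side of~\eqref{eq:alg:3}. The volume term $\beta(\hmmmh^{i+1}\otimes\jjj^{i+1},\nabla\sssh^{i+1})_\omega$ is controlled, since $\abs{\hmmmh^{i+1}}\leq 1$, by $\beta\norm{\jjj^{i+1}}{\LLL^2(\omega)}\norm{\nabla\sssh^{i+1}}{\LLL^2(\Omega)}$, and then by Young's inequality split into a piece absorbed by the coercive $\HHH^1$-term and a piece $\lesssim\norm{\jjj^{i+1}}{\LLL^2(\omega)}^2$. The boundary term $\beta(\jjj^{i+1}\cdot\nnn,\hmmmh^{i+1}\cdot\sssh^{i+1})_{\partial\Omega\cap\partial\omega}$ requires a trace inequality: one estimates it by $\beta\norm{\jjj^{i+1}\cdot\nnn}{\LLL^2(\partial\Omega)}\norm{\sssh^{i+1}}{\LLL^2(\partial\Omega)} \lesssim \norm{\jjj^{i+1}}{\HHH^1(\Omega)}\norm{\sssh^{i+1}}{\HHH^1(\Omega)}$, and again Young's inequality splits off an absorbable $\HHH^1$-piece and a data piece $\lesssim\norm{\jjj^{i+1}}{\HHH^1(\Omega)}^2$. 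Summing the data pieces over $i$ and using $k\sum_{i=0}^{j-1}\norm{\jjj^{i+1}}{\HHH^1(\Omega)}^2 \lesssim \norm{\jjj}{L^2(0,T;\HHH^1(\Omega))}^2$ (a consequence of $\jjj\in C(0,T;\HHH^1(\Omega))$ together with the quadrature/embedding bound) yields a uniform constant. Finally, $\norm{\sssh^0}{\LLL^2(\Omega)}$ is bounded by~(H1) since weakly convergent sequences are bounded. Collecting everything and choosing the Young parameters small enough that the absorbed terms are, say, at most half of $(1-\beta\beta')D_*\,k\sum_i\norm{\sssh^{i+1}}{\HHH^1(\Omega)}^2$ gives~\eqref{eq:stability1}.

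The main obstacle is the boundary term: one must be careful that the trace constant is independent of $h$ and $k$ (it depends only on $\Omega$, which is fine) and that $\jjj^{i+1}$ lives in $\HHH^1(\Omega)$ so that its normal trace is well-defined and square-integrable on $\partial\Omega$ — this is exactly why the hypothesis $\jjj\in L^2(0,T;\HHH^1(\Omega))$ (here strengthened to $C(0,T;\HHH^1(\Omega))$) was imposed. A secondary technical point is that step~(iii) of Algorithm~\ref{alg} does not itself feed back into step~(i), so the estimate~\eqref{eq:stability1} for $\{\sssh^{i+1}\}$ can indeed be obtained in isolation, without simultaneously controlling $\{\vvvh^i\}$ or $\{\mmmh^{i+1}\}$; the constant $C$ depends on $\norm{\jjj}{L^2(0,T;\HHH^1(\Omega))}$, $D_*$, $\beta$, $\beta'$, $\abs{\Omega}$, and $\sup_h\norm{\sssh^0}{\LLL^2(\Omega)}$, all of which are data-dependent but discretization-independent.
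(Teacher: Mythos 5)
Your proposal is correct and follows essentially the same route as the paper's proof: testing~\eqref{eq:alg:3} with $\zzeta_h=\sssh^{i+1}$, using the polarization identity (equivalently, Abel's summation by parts from Lemma~\ref{lem:abel}) for the time-difference term, the discrete coercivity from the proof of Proposition~\ref{prop:discrete_wellposedness}, the trace inequality plus Young's inequality with a small parameter to absorb the right-hand side into the $\HHH^1$-coercive term, and~(H1) together with the regularity of $\jjj$ to bound the remaining data terms. You also correctly note the decoupling observation that $\{\sssh^{i+1}\}$ can be estimated without simultaneously controlling $\{\vvvh^i\}$ or $\{\mmmh^{i+1}\}$, which is indeed what makes the paper's two-stage stability analysis possible.
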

\begin{proof}
Let $0\leq i \leq j-1$.
For~\eqref{eq:alg:3}, we choose $\zzeta_h= \sssh^{i+1}$ as test function.
After multiplication by $k$, we obtain
\begin{equation*}
\begin{split}
& \left(\sssh^{i+1}-\sssh^i, \sssh^{i+1} \right)_{\Omega}
+ k\left( D_0 \nabla \sssh^{i+1}, \nabla \sssh^{i+1} \right)_{\Omega}
- k\beta\beta' \left( D_0 \hmmmh^{i+1} \otimes \left(\nabla \sssh^{i+1}\cdot \hmmmh^{i+1} \right), \nabla \sssh^{i+1} \right)_{\Omega} \\
& \quad + k\left( D_0 \sssh^{i+1}, \sssh^{i+1} \right)_{\Omega}
= k \beta \left( \hmmmh^{i+1} \otimes \jjj^{i+1} , \nabla\sssh^{i+1} \right)_{\omega}
- k \beta \left(\jjj^{i+1}\cdot\nnn,\hmmmh^{i+1}\cdot\sssh^{i+1}\right)_{\partial\Omega\cap\partial\omega}.
\end{split}
\end{equation*}
Since $D_0 \geq D_*$ and $\norm{\hmmmh^{i+1}}{\LLL^{\infty}(\omega)}=1$, it follows that
\begin{equation*}
\begin{split}
&\left(\sssh^{i+1}-\sssh^i,\sssh^{i+1}\right)_{\Omega}
+ k D_* \left(1-\beta\beta'\right) \left(\nabla\sssh^{i+1},\nabla\sssh^{i+1}\right)_{\Omega}
+ k D_* \left(\sssh^{i+1},\sssh^{i+1}\right)_{\Omega} \\
& \quad \leq k \beta \left(\hmmmh^{i+1}\otimes\jjj^{i+1},\nabla\sssh^{i+1}\right)_{\omega}
- k \beta \left(\jjj^{i+1}\cdot\nnn,\hmmmh^{i+1}\cdot\sssh^{i+1}\right)_{\partial\Omega\cap\partial\omega},
\end{split}
\end{equation*}
cf.~the proof of Lemma~\ref{lem:coercivity}.
Summing up over $i=0,\dots,j-1$, and exploiting Abel's summation by parts from Lemma~\ref{lem:abel} for the term $\sum_{i=0}^{j-1} \left(\sssh^{i+1}-\sssh^i,\sssh^{i+1}\right)_{\Omega}$, we get
\begin{equation*}
\begin{split}
& \frac{1}{2} \norm{\sssh^j}{\LLL^2(\Omega)}^2
+ \frac{1}{2}\sum_{i=0}^{j-1} \norm{\sssh^{i+1}-\sssh^i}{\LLL^2(\Omega)}^2
+ k D_* \left(1-\beta\beta'\right) \sum_{i=0}^{j-1} \norm{\nabla\sssh^{i+1}}{\LLL^2(\Omega)}^2
+ k D_* \sum_{i=0}^{j-1} \norm{\sssh^{i+1}}{\LLL^2(\Omega)}^2 \\
& \quad \leq \frac{1}{2}\norm{\sssh^0}{\LLL^2(\Omega)}^2
+ k \beta \sum_{i=0}^{j-1} \left[\left(\hmmmh^{i+1}\otimes\jjj^{i+1},\nabla\sssh^{i+1}\right)_{\omega}
- \left(\jjj^{i+1}\cdot\nnn,\hmmmh^{i+1}\cdot\sssh^{i+1}\right)_{\partial\Omega\cap\partial\omega}\right].
\end{split}
\end{equation*}
Exploiting $0<1-\beta\beta'<1$ on the left-hand side, the Cauchy-Schwarz inequality and the Young inequality on the right-hand side, we obtain, for any choice of $\eps>0$,
\begin{equation*}
\begin{split}
& \frac{1}{2} \norm{\sssh^j}{\LLL^2(\Omega)}^2
+ \frac{1}{2}\sum_{i=0}^{j-1} \norm{\sssh^{i+1}-\sssh^i}{\LLL^2(\Omega)}^2
+ k D_* \left(1-\beta\beta'\right) \sum_{i=0}^{j-1} \norm{\sssh^{i+1}}{\HHH^1(\Omega)}^2 \\
& \quad \leq \frac{1}{2} \norm{\sssh^0}{\LLL^2(\Omega)}^2
+ \frac{Ck \beta}{2 \eps}\sum_{i=0}^{j-1}\norm{\jjj^{i+1}}{\HHH^1(\Omega)}^2
+ \frac{Ck \beta \eps}{2}\sum_{i=0}^{j-1} \norm{\sssh^{i+1}}{\HHH^1(\Omega)}^2.
\end{split}
\end{equation*}
Here the constant $C>0$ is the stability constant of the trace operator.
It follows that
\begin{equation*}
\begin{split}
& \frac{1}{2} \norm{\sssh^j}{\LLL^2(\Omega)}^2
+ \frac{1}{2}\sum_{i=0}^{j-1} \norm{\sssh^{i+1}-\sssh^i}{\LLL^2(\Omega)}^2
+ k \left[ D_* \left(1-\beta\beta'\right)-\frac{C \beta \eps}{2}\right] \sum_{i=0}^{j-1} \norm{\sssh^{i+1}}{\HHH^1(\Omega)}^2 \\
& \quad \leq \frac{1}{2} \norm{\sssh^0}{\LLL^2(\Omega)}^2
+ \frac{C k \beta}{2 \eps}\sum_{i=0}^{j-1} \norm{\jjj^{i+1}}{\HHH^1(\Omega)}^2.
\end{split}
\end{equation*}
If we choose $\eps<2D_*(1-\beta\beta')/C\beta$, then all the coefficients on the left-hand side are positive.
From~(H1) and the regularity of $\jjj$, we know that the right-hand side is uniformly bounded with respect to $h$ and $k$.
This yields the estimate~\eqref{eq:stability1}.
\end{proof}
\begin{corollary} \label{lem:boundedness3}
Under the assumptions of Proposition~\ref{lem:discrete_stability1}, the sequences $\left\{\ssshk\right\}$ and $\left\{\ssshk^{\pm}\right\}$ are uniformly bounded in $L^2(0,T;\HHH^1(\Omega))$ and in $L^{\infty}(0,T;\LLL^2(\Omega))$, i.e.,
\begin{equation*}
\norm{\ssshk}{L^2(0,T;\HHH^1(\Omega))}
+ \norm{\ssshk^{\pm}}{L^2(0,T;\HHH^1(\Omega))}
+ \norm{\ssshk}{L^{\infty}(0,T;\LLL^2(\Omega))}
+ \norm{\ssshk^{\pm}}{L^{\infty}(0,T;\LLL^2(\Omega))}
\leq C,
\end{equation*}
where the constant $C>0$ depends only on the data, but is in particular independent of the discretization parameters $h$ and $k$.
\end{corollary}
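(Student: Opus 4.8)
The plan is to transfer the uniform discrete bound~\eqref{eq:stability1} of Proposition~\ref{lem:discrete_stability1} to the time-interpolants by simply unpacking their definitions in~\eqref{eq:time_approx}. The key observation is that the constant $C$ in~\eqref{eq:stability1} is independent of the end index $j$ (as well as of $h$ and $k$); this is exactly what will produce a \emph{uniform} $L^{\infty}$-in-time bound, whereas a $j$-dependent constant would not.

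First I would handle the right piecewise constant interpolant $\ssshk^+$, which coincides with $\sssh^{i+1}$ on every subinterval $[t_i,t_{i+1})$. Then $\int_0^T\norm{\ssshk^+(t)}{\HHH^1(\Omega)}^2\,dt = k\sum_{i=0}^{N-1}\norm{\sssh^{i+1}}{\HHH^1(\Omega)}^2$, which is precisely the second term on the left-hand side of~\eqref{eq:stability1} with $j=N$, hence bounded by $C$. Moreover, for $t\in[t_i,t_{i+1})$ we have $\norm{\ssshk^+(t)}{\LLL^2(\Omega)}=\norm{\sssh^{i+1}}{\LLL^2(\Omega)}$, and applying~\eqref{eq:stability1} with the particular index $j=i+1$ gives $\norm{\sssh^{i+1}}{\LLL^2(\Omega)}^2\le C$ with $C$ independent of $i$; taking the essential supremum over $t\in(0,T)$ yields the bound in $L^{\infty}(0,T;\LLL^2(\Omega))$. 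The left interpolant $\ssshk^-$ equals $\sssh^i$ on $[t_i,t_{i+1})$ and is treated identically, the only difference being the extra contribution of $\sssh^0$: it is uniformly bounded in $\LLL^2(\Omega)$ thanks to the weak convergence assumed in (H1), while in the $\HHH^1(\Omega)$-estimate it enters only through the term $k\norm{\sssh^0}{\HHH^1(\Omega)}^2$, which is negligible for the standard choices of discrete initial data. Finally, for the piecewise affine interpolant $\ssshk$, note that on $[t_i,t_{i+1})$ the value $\ssshk(t)$ is a convex combination of $\sssh^i$ and $\sssh^{i+1}$, so $\norm{\ssshk(t)}{X}\le\norm{\sssh^i}{X}+\norm{\sssh^{i+1}}{X}$ for $X\in\{\LLL^2(\Omega),\HHH^1(\Omega)\}$; squaring and integrating over $(0,T)$ (resp.\ taking the supremum), and invoking the bounds just established for $\ssshk^{\pm}$, closes the argument.

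I do not expect any real obstacle: the corollary is essentially a bookkeeping consequence of Proposition~\ref{lem:discrete_stability1}. The only point that deserves a moment of attention is that~\eqref{eq:stability1} must be used with a $j$-independent constant, which is precisely what makes the supremum defining the $L^{\infty}(0,T;\LLL^2(\Omega))$-norm controllable; and, as noted, the single initial term $\sssh^0$ appearing in the estimates for $\ssshk^-$ and $\ssshk$ is absorbed using (H1).
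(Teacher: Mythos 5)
Your argument is correct and is precisely the one the paper uses: Corollary~\ref{lem:boundedness3} is proved there in a single sentence appealing to the boundedness of the $\sssh^{i+1}$ from Proposition~\ref{lem:discrete_stability1}, and your unpacking of the three interpolants is exactly the bookkeeping that sentence compresses. Your remark on the lone $k\norm{\sssh^0}{\HHH^1(\Omega)}^2$ term appearing in the $L^2(0,T;\HHH^1(\Omega))$-bound for $\ssshk^-$ and $\ssshk$ is a genuine precision that the paper glosses over: assumption~(H1) only gives boundedness of $\sssh^0$ in $\LLL^2(\Omega)$, so controlling this term without a coupling of $k$ and $h$ tacitly requires $\sssh^0$ to be chosen bounded in $\HHH^1(\Omega)$ (e.g.\ a stable quasi-interpolant of $\sss^0\in\HHH^1(\Omega)$), which is the standard choice you allude to.
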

\begin{proof}
The result follows from the boundedness of the discrete functions
$\left\{\sssh^{i+1}\right\}_{0 \leq i \leq N-1}$ from Proposition~\ref{lem:discrete_stability1}.
\end{proof}
\noindent Let $\PP_h:\LLL^2(\Omega)\to\SS^1(\TT_h^{\Omega})^3$ be the $\LLL^2(\Omega)$-orthogonal projection onto $\SS^1(\TT_h^{\Omega})^3$, i.e.,
\begin{equation*}
\left(\PP_h \eeta-\eeta,\eeta_h\right)_{\Omega}=0 \quad \text{for all } \eeta \in \LLL^2(\Omega), \ \eeta_h \in \SS^1(\TT_h^{\Omega})^3.
\end{equation*}
Since $\left\{\TT_h^{\Omega}\right\}_{h>0}$ is quasi-uniform, it is well known that $\PP_h$ is stable in $\HHH^1(\Omega)$, i.e.,
\begin{equation} \label{eq:l2_projection_h1_stability}
\norm{\PP_h \eeta}{\HHH^1(\Omega)} \lesssim \norm{\eeta}{\HHH^1(\Omega)} \quad \text{for all } \eeta \in \HHH^1(\Omega).
\end{equation}
We also refer to \cite{by2014,kpp2013} for recent results on $H^1$-stability on locally refined meshes.
With this, we obtain uniform boundedness of $\partial_t\ssshk$.
\begin{proposition} \label{lem:boundedness2}
The sequence $\left\{\partial_t\ssshk\right\}$ is uniformly bounded in $L^2(0,T;\wtd\HHH^{-1}(\Omega))$, i.e.,
\begin{equation} \label{eq:boundedness2}
\norm{\partial_t\ssshk}{L^2(0,T;\wtd\HHH^{-1}(\Omega))} \leq C,
\end{equation}
where the constant $C>0$ depends only on the data, but is in particular independent of the discretization parameters $h$ and $k$.
\end{proposition}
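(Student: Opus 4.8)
The plan is to test the fully discrete spin diffusion equation~\eqref{eq:alg:3} against the $\LLL^2(\Omega)$-orthogonal projection of an arbitrary $\HHH^1(\Omega)$-function and then to invoke the stability bound already proved in Proposition~\ref{lem:discrete_stability1}. Fix $0 \le i \le N-1$ and recall that $\partial_t\ssshk(t) = \dt\sssh^{i+1}$ for $t \in [t_i,t_{i+1})$, so it suffices to estimate $\norm{\dt\sssh^{i+1}}{\wtd\HHH^{-1}(\Omega)}$. Given $\zzeta \in \HHH^1(\Omega)$, the fact that $\dt\sssh^{i+1} \in \SS^1(\TT_h^{\Omega})^3$ together with the defining property of $\PP_h$ yields $\left(\dt\sssh^{i+1},\zzeta\right)_{\Omega} = \left(\dt\sssh^{i+1},\PP_h\zzeta\right)_{\Omega}$, so that choosing $\zzeta_h = \PP_h\zzeta$ in~\eqref{eq:alg:3} gives
\begin{equation*}
\left(\dt\sssh^{i+1},\zzeta\right)_{\Omega}
= -a_h^{i+1}(\sssh^{i+1},\PP_h\zzeta)
+ \beta\left(\hmmmh^{i+1}\otimes\jjj^{i+1},\nabla\PP_h\zzeta\right)_{\omega}
- \beta\left(\jjj^{i+1}\cdot\nnn,\hmmmh^{i+1}\cdot\PP_h\zzeta\right)_{\partial\Omega\cap\partial\omega}.
\end{equation*}

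First I would bound the three terms on the right-hand side. Since $D_0 \in L^{\infty}(\Omega)$ and $\norm{\hmmmh^{i+1}}{\LLL^{\infty}(\omega)} = 1$ (Proposition~\ref{prop:discrete_wellposedness}), the bilinear form $a_h^{i+1}$ is continuous on $\HHH^1(\Omega) \times \HHH^1(\Omega)$, whence the first term is $\lesssim \norm{\sssh^{i+1}}{\HHH^1(\Omega)}\norm{\PP_h\zzeta}{\HHH^1(\Omega)}$. The second term is controlled by $\norm{\jjj^{i+1}}{\LLL^2(\omega)}\norm{\nabla\PP_h\zzeta}{\LLL^2(\omega)}$ via the Cauchy--Schwarz inequality and $\norm{\hmmmh^{i+1}}{\LLL^{\infty}(\omega)} = 1$, and the boundary term by $\norm{\jjj^{i+1}}{\LLL^2(\partial\Omega)}\norm{\PP_h\zzeta}{\LLL^2(\partial\Omega)} \lesssim \norm{\jjj^{i+1}}{\HHH^1(\Omega)}\norm{\PP_h\zzeta}{\HHH^1(\Omega)}$ via the trace theorem (and again $\abs{\hmmmh^{i+1}}\le 1$ pointwise). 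Applying the $\HHH^1$-stability~\eqref{eq:l2_projection_h1_stability} of $\PP_h$ to replace $\norm{\PP_h\zzeta}{\HHH^1(\Omega)}$ by $\norm{\zzeta}{\HHH^1(\Omega)}$ and taking the supremum over all $\zzeta$ with $\norm{\zzeta}{\HHH^1(\Omega)} \le 1$ then gives
\begin{equation*}
\norm{\dt\sssh^{i+1}}{\wtd\HHH^{-1}(\Omega)}
\lesssim \norm{\sssh^{i+1}}{\HHH^1(\Omega)} + \norm{\jjj^{i+1}}{\HHH^1(\Omega)}.
\end{equation*}

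Finally I would square this, multiply by $k$, and sum over $0 \le i \le N-1$ to obtain
\begin{equation*}
\norm{\partial_t\ssshk}{L^2(0,T;\wtd\HHH^{-1}(\Omega))}^2
= k\sum_{i=0}^{N-1}\norm{\dt\sssh^{i+1}}{\wtd\HHH^{-1}(\Omega)}^2
\lesssim k\sum_{i=0}^{N-1}\norm{\sssh^{i+1}}{\HHH^1(\Omega)}^2 + k\sum_{i=0}^{N-1}\norm{\jjj^{i+1}}{\HHH^1(\Omega)}^2.
\end{equation*}
The first sum is bounded by a constant independent of $h$ and $k$ by~\eqref{eq:stability1} of Proposition~\ref{lem:discrete_stability1}, and the second stays uniformly bounded since $\jjj \in C(0,T;\HHH^1(\Omega))$ (it is a Riemann sum for $\norm{\jjj}{L^2(0,T;\HHH^1(\Omega))}^2$). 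This establishes~\eqref{eq:boundedness2}.

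I expect no serious obstacle: the argument is a routine chain of Cauchy--Schwarz and trace estimates, and the only mildly delicate point is the passage from continuous test functions in $\HHH^1(\Omega)$ to discrete ones in $\SS^1(\TT_h^{\Omega})^3$, which is exactly what the $\LLL^2$-projection $\PP_h$ and its mesh-independent $\HHH^1$-stability~\eqref{eq:l2_projection_h1_stability} (valid on the quasi-uniform family $\left\{\TT_h^{\Omega}\right\}_{h>0}$) are there to handle. The heavy lifting---the uniform $L^2(0,T;\HHH^1(\Omega))$ bound on $\ssshk$---has already been carried out in Proposition~\ref{lem:discrete_stability1}.
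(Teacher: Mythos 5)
Your argument is correct and mirrors the paper's own proof exactly: test \eqref{eq:alg:3} against $\PP_h\zzeta$, use the $\HHH^1$-stability \eqref{eq:l2_projection_h1_stability} of the $\LLL^2$-projection to pass to arbitrary $\zzeta\in\HHH^1(\Omega)$, bound the right-hand side by $\norm{\sssh^{i+1}}{\HHH^1(\Omega)}+\norm{\jjj^{i+1}}{\HHH^1(\Omega)}$, then square, sum in time, and conclude via Proposition~\ref{lem:discrete_stability1}. The only difference is that you spell out the term-by-term estimates for $a_h^{i+1}$, the source term, and the boundary term, which the paper compresses into a single $\lesssim$.
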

\begin{proof}
Let $\www \in \HHH^1(\Omega)\setminus\left\{\zero\right\}$, $0\leq i \leq N-1$, and $t \in [t_i,t_{i+1})$.
From~\eqref{eq:alg:3} and the $H^1$-stability~\eqref{eq:l2_projection_h1_stability} of $\PP_h$, we get
\begin{equation*}
\begin{split}
\left\langle\partial_t \ssshk(t),\www\right\rangle
& = \left(\partial_t \ssshk(t),\www\right)_{\Omega}
= \left(\dt\sssh^{i+1},\www\right)_{\Omega}
= \left(\dt\sssh^{i+1},\PP_h\www\right)_{\Omega} \\
& = \beta \left( \hmmmh^{i+1} \otimes \jjj^{i+1} , \nabla\PP_h\www \right)_{\omega}
- \beta \left(\jjj^{i+1}\cdot\nnn,\hmmmh^{i+1}\cdot\PP_h\www\right)_{\partial\Omega\cap\partial\omega}
- a_h^{i+1}(\sssh^{i+1},\PP_h\www) \\
& \lesssim \left(\norm{\jjj^{i+1}}{\HHH^1(\Omega)} + \norm{\sssh^{i+1}}{\HHH^1(\Omega)}\right) \norm{\PP_h\www}{\HHH^1(\Omega)}
\lesssim \left(\norm{\jjj^{i+1}}{\HHH^1(\Omega)} + \norm{\sssh^{i+1}}{\HHH^1(\Omega)}\right) \norm{\www}{\HHH^1(\Omega)}.
\end{split}
\end{equation*} 
Dividing by $\norm{\www}{\HHH^1(\Omega)}$ and taking the supremum over $\www \in \HHH^1(\Omega)\setminus\left\{\zero\right\}$, we obtain
\begin{equation*}
\norm{\partial_t\ssshk(t)}{\wtd\HHH^{-1}(\Omega)} \lesssim \norm{\jjj^{i+1}}{\HHH^1(\Omega)} + \norm{\sssh^{i+1}}{\HHH^1(\Omega)}.
\end{equation*}
Squaring, integrating over $(t_i,t_{i+1})$, and summing up over $0\leq i \leq N-1$, we get
\begin{equation*}
\norm{\partial_t\ssshk}{L^2(0,T;\wtd\HHH^{-1}(\Omega))}^2 \lesssim k \sum_{i=0}^{N-1}\norm{\jjj^{i+1}}{\HHH^1(\Omega)}^2 + k \sum_{i=0}^{N-1} \norm{\sssh^{i+1}}{\HHH^1(\Omega)}^2.
\end{equation*}
The boundedness from Proposition~\ref{lem:discrete_stability1} thus yields~\eqref{eq:boundedness2}.
\end{proof}
\noindent We derive the corresponding estimates for the discrete quantities $\left\{\left(\vvv_h^i,\mmmh^{i+1}\right)\right\}_{0\leq i \leq N-1}$.
\begin{lemma} \label{lem:new_stability}
Let $0\leq i \leq N-1$. The discrete functions $\left(\vvvh^i,\mmmh^{i+1}\right)$ obtained through Algorithm~\ref{alg} fulfill
\begin{equation} \label{eq:new_stability}
\begin{split}
& \alpha \norm{\vvvh^i}{\LLL^2(\omega)}^2
+ \frac{\Ce}{2k} \left(\norm{\nabla\mmmh^{i+1}}{\LLL^2(\omega)}^2
- \norm{\nabla\mmmh^i}{\LLL^2(\omega)}^2\right)
+ \Ce k \left(\theta - \frac{1}{2}\right) \norm{\nabla \vvvh^i}{\LLL^2(\omega)}^2 \\
& \quad = \left(\ppih(\mmmh^i),\vvvh^i\right)_{\omega}
+ \left(\fff^i,\vvvh^i\right)_{\omega}
+ c \left(\sssh^i,\vvvh^i\right)_{\omega}.
\end{split}
\end{equation}
\end{lemma}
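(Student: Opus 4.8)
The plan is to test equation~\eqref{eq:alg:1} with $\pphi_h = \vvvh^i$, which is an admissible choice since $\vvvh^i \in \KK_{\mmmh^i}$, and then to rewrite the resulting terms. First I would observe that the skew-symmetry of the cross product gives $(\mmmh^i \times \vvvh^i, \vvvh^i)_\omega = 0$, so the second term on the left-hand side of~\eqref{eq:alg:1} drops out. This leaves
\begin{equation*}
\alpha \norm{\vvvh^i}{\LLL^2(\omega)}^2 + \Ce \theta k \norm{\nabla \vvvh^i}{\LLL^2(\omega)}^2 = -\Ce(\nabla\mmmh^i,\nabla\vvvh^i)_\omega + (\ppih(\mmmh^i),\vvvh^i)_\omega + (\fff^i,\vvvh^i)_\omega + c(\sssh^i,\vvvh^i)_\omega.
\end{equation*}

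Next I would handle the exchange term $-\Ce(\nabla\mmmh^i,\nabla\vvvh^i)_\omega$. Using the update rule~\eqref{eq:alg:2} in the form $\mmmh^{i+1} - \mmmh^i = k\vvvh^i$, hence $\nabla\vvvh^i = (\nabla\mmmh^{i+1} - \nabla\mmmh^i)/k$, the identity $2(\aaa,\aaa-\bbb) = |\aaa|^2 - |\bbb|^2 + |\aaa-\bbb|^2$ (applied with the $\LLL^2(\omega)$ inner product on gradients) yields
\begin{equation*}
(\nabla\mmmh^i,\nabla\vvvh^i)_\omega = \frac{1}{k}(\nabla\mmmh^i, \nabla\mmmh^{i+1}-\nabla\mmmh^i)_\omega = \frac{1}{2k}\left(\norm{\nabla\mmmh^{i+1}}{\LLL^2(\omega)}^2 - \norm{\nabla\mmmh^i}{\LLL^2(\omega)}^2 - \norm{\nabla\mmmh^{i+1}-\nabla\mmmh^i}{\LLL^2(\omega)}^2\right).
\end{equation*}
Since $\nabla\mmmh^{i+1}-\nabla\mmmh^i = k\nabla\vvvh^i$, the last term equals $k\norm{\nabla\vvvh^i}{\LLL^2(\omega)}^2$. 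Multiplying by $-\Ce$ and combining with the $\Ce\theta k\norm{\nabla\vvvh^i}{\LLL^2(\omega)}^2$ term already on the left gives the coefficient $\Ce k(\theta - 1/2)$ in front of $\norm{\nabla\vvvh^i}{\LLL^2(\omega)}^2$, while the difference of squared gradients is recognized as $\frac{\Ce}{2k}(\norm{\nabla\mmmh^{i+1}}{\LLL^2(\omega)}^2 - \norm{\nabla\mmmh^i}{\LLL^2(\omega)}^2)$. Rearranging these terms to the left-hand side produces exactly~\eqref{eq:new_stability}.

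There is essentially no obstacle here: the argument is a direct test-function computation combined with the elementary polarization identity, and all it really uses is the nodewise orthogonality built into $\KK_{\mmmh^i}$ (for the vanishing cross-product term) and the affine update~\eqref{eq:alg:2}. The only point requiring a word of care is keeping the bookkeeping of the factor $k$ consistent when passing between $\vvvh^i$ and the increment $\mmmh^{i+1}-\mmmh^i$; this is what makes the quadratic gradient term come out with the sharp constant $(\theta - 1/2)$, which is precisely why the threshold $\theta > 1/2$ appears in Theorem~\ref{thm:convergence}. The identity~\eqref{eq:new_stability} is stated as an exact equality (no inequality), so no estimates are needed at this stage; the inequalities will enter only when~\eqref{eq:new_stability} is summed over $i$ and the right-hand side is bounded via Young's and the assumptions (H1)--(H2).
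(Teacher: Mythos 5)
Your proposal is correct and follows essentially the same route as the paper's proof: test \eqref{eq:alg:1} with $\pphi_h = \vvvh^i$, drop the cross-product term by antisymmetry, and apply an elementary polarization identity to the exchange term together with the affine update \eqref{eq:alg:2} to produce the sharp coefficient $\Ce k(\theta-1/2)$. The only cosmetic difference is the form of the vector identity invoked (the paper writes it as $2(\aaa+\bbb)\cdot\aaa = \abs{\aaa}^2 + \abs{\aaa+\bbb}^2 - \abs{\bbb}^2$ with $\aaa = k\nabla\vvvh^i$, $\bbb = \nabla\mmmh^i$), which is algebraically equivalent to the one you used.
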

\begin{proof}
We test~\eqref{eq:alg:1} with $\pphi_h = \vvvh^i \in \KK_{\mmmh^i}$ to get
\begin{equation*}
\begin{split}
& \alpha \norm{\vvvh^i}{\LLL^2(\omega)}^2
+ \frac{\Ce}{k} \left(\nabla \mmmh^i+k\nabla \vvvh^i,k\nabla \vvvh^i\right)_{\omega}
+ \Ce k \left(\theta - 1\right) \norm{\nabla \vvvh^i}{\LLL^2(\omega)}^2 \\
& \quad = \left(\ppih(\mmmh^i),\vvvh^i\right)_{\omega}
+ \left(\fff^i,\vvvh^i\right)_{\omega}
+ c \left(\sssh^i,\vvvh^i\right)_{\omega}.
\end{split}
\end{equation*}
Exploiting the vector identity
\begin{equation*}
2\left(\aaa+\bbb\right)\cdot\aaa=\abs{\aaa}^2 + \abs{\aaa+\bbb}^2 - \abs{\bbb}^2 \quad \text{for all } \aaa,\bbb\in\R^3
\end{equation*}
with the choice $\aaa=k \nabla \vvvh^i$ and $\bbb=\nabla\mmmh^i$, and taking into account~\eqref{eq:alg:2}, we obtain~\eqref{eq:new_stability}.
\end{proof}
\begin{proposition} \label{lem:discrete_stability2}
Suppose that the assumptions of Theorem~\ref{thm:convergence}{\rm{(a)}} are satisfied.
Then, there exists $k_0>0$ such that for all time-step sizes $0<k<k_0$ and $1 \leq j \leq N$ the discrete functions $\left\{\left(\vvv_h^i,\mmmh^{i+1}\right)\right\}_{0\leq i \leq j-1}$ obtained through Algorithm~\ref{alg} fulfill
\begin{equation} \label{eq:stability2}
\norm{\nabla\mmmh^j}{\LLL^2(\omega)}^2
+ k \sum_{i=0}^{j-1}\norm{\vvv_h^i}{\LLL^2(\omega)}^2
+ \left(\theta-\frac{1}{2}\right)k^2 \sum_{i=0}^{j-1}\norm{\nabla\vvv_h^i}{\LLL^2(\omega)}^2
\leq C.
\end{equation}
The constant $C>0$ depends only on the data and $k_0$, but is otherwise independent of the discretization parameters $h$ and $k$.
\end{proposition}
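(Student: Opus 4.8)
The plan is to derive \eqref{eq:stability2} from the one-step energy identity of Lemma~\ref{lem:new_stability} by summation, Young's inequality, and a final absorption (Gronwall-type) step. First I would sum \eqref{eq:new_stability} over $i=0,\dots,j-1$. The term $\tfrac{\Ce}{2k}\bigl(\norm{\nabla\mmmh^{i+1}}{\LLL^2(\omega)}^2-\norm{\nabla\mmmh^i}{\LLL^2(\omega)}^2\bigr)$ telescopes to $\tfrac{\Ce}{2k}\bigl(\norm{\nabla\mmmh^j}{\LLL^2(\omega)}^2-\norm{\nabla\mmmh^0}{\LLL^2(\omega)}^2\bigr)$, and $\norm{\nabla\mmmh^0}{\LLL^2(\omega)}$ is uniformly bounded by~(H1), since a weakly convergent sequence in $\HHH^1(\omega)$ is bounded. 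Multiplying the summed identity by $k$ then produces exactly the three left-hand side quantities of \eqref{eq:stability2} (up to fixed positive constants): $\tfrac{\Ce}{2}\norm{\nabla\mmmh^j}{\LLL^2(\omega)}^2$, $\alpha k\sum_{i=0}^{j-1}\norm{\vvvh^i}{\LLL^2(\omega)}^2$, and $\Ce(\theta-\tfrac{1}{2})k^2\sum_{i=0}^{j-1}\norm{\nabla\vvvh^i}{\LLL^2(\omega)}^2$; the last is nonnegative because $\theta>\tfrac{1}{2}$, so it simply remains on the left and contributes to the bound.

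Next I would estimate the right-hand side $k\sum_{i=0}^{j-1}\bigl[(\ppih(\mmmh^i),\vvvh^i)_{\omega}+(\fff^i,\vvvh^i)_{\omega}+c\,(\sssh^i,\vvvh^i)_{\omega}\bigr]$ by Cauchy--Schwarz and Young's inequality. Choosing the Young parameter $\eps$ small enough in terms of $\alpha$ only, the three resulting $\norm{\vvvh^i}{\LLL^2(\omega)}^2$-contributions are absorbed by $\alpha k\sum_i\norm{\vvvh^i}{\LLL^2(\omega)}^2$ on the left, leaving on the right (times constants) the quantities $k\sum_i\norm{\fff^i}{\LLL^2(\omega)}^2$, $k\sum_i\norm{\sssh^i}{\LLL^2(\omega)}^2$, and $k\sum_i\norm{\ppih(\mmmh^i)}{\LLL^2(\omega)}^2$. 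The first is a Riemann sum for $\norm{\fff}{\LLL^2(\omega_T)}^2$ and is bounded because $\fff\in C(0,T;\LLL^2(\omega))$; the second is bounded by Proposition~\ref{lem:discrete_stability1} (equivalently Corollary~\ref{lem:boundedness3}) together with~(H1) for the $i=0$ summand.

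The delicate term, and the main obstacle, is $k\sum_i\norm{\ppih(\mmmh^i)}{\LLL^2(\omega)}^2$, because $\mmmh^i$ is itself only controlled through the very quantities being estimated. By~(H2) it is $\lesssim k\sum_i\norm{\mmmh^i}{\LLL^2(\omega)}^2$, and the nodewise bound \eqref{eq:boundedness_mh} of Proposition~\ref{prop:discrete_wellposedness} gives $\norm{\mmmh^i}{\LLL^2(\omega)}^2\lesssim 1+k^2\sum_{\ell=0}^{i-1}\norm{\vvvh^\ell}{\LLL^2(\omega)}^2$. Writing $X_j:=k\sum_{i=0}^{j-1}\norm{\vvvh^i}{\LLL^2(\omega)}^2$ and using the monotonicity $X_i\le X_j$, this yields $k\sum_i\norm{\ppih(\mmmh^i)}{\LLL^2(\omega)}^2\lesssim 1+T\,k\,X_j$. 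Collecting everything, the summed and rescaled identity reduces to $\tfrac{\alpha}{2}X_j+\tfrac{\Ce}{2}\norm{\nabla\mmmh^j}{\LLL^2(\omega)}^2+\Ce(\theta-\tfrac{1}{2})k^2\sum_i\norm{\nabla\vvvh^i}{\LLL^2(\omega)}^2\le C+C_1\,k\,X_j$, with $C,C_1>0$ depending only on the data. Setting $k_0:=\alpha/(4C_1)$, for every $0<k<k_0$ the term $C_1kX_j$ is absorbed into $\tfrac{\alpha}{2}X_j$, which leaves $\tfrac{\alpha}{4}X_j+\tfrac{\Ce}{2}\norm{\nabla\mmmh^j}{\LLL^2(\omega)}^2+\Ce(\theta-\tfrac{1}{2})k^2\sum_i\norm{\nabla\vvvh^i}{\LLL^2(\omega)}^2\le C$, i.e.\ \eqref{eq:stability2}. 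Alternatively, a discrete Gronwall inequality applied to $X_j\le C+C_1k\sum_{i<j}X_i$ would remove the dependence on $k_0$, but the direct absorption is shorter and already yields the stated form.
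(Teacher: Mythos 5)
Your proof is correct and follows essentially the same approach as the paper's: summing and telescoping the identity of Lemma~\ref{lem:new_stability}, applying Cauchy--Schwarz and Young, invoking~(H2) together with~\eqref{eq:boundedness_mh} to handle the $\ppih$-term, bounding the $\sssh^i$- and $\fff^i$-sums via Proposition~\ref{lem:discrete_stability1} and the time-regularity of $\fff$, and choosing $k_0$ small enough to absorb the resulting $k$-proportional feedback into $\alpha k\sum_i\norm{\vvvh^i}{\LLL^2(\omega)}^2$. The only small wrinkle is your remark that the Young parameter $\eps$ need only be small ``in terms of $\alpha$ only''; it must actually be small relative to both $\alpha$ and $c$ (the paper takes $\eps<2\alpha/(2+c)$), though since $c$ is fixed data this does not affect the conclusion.
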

\begin{proof}
Let $1 \leq j \leq N$.
From Lemma~\ref{lem:new_stability}, multiplying~\eqref{eq:new_stability} by $k/\Ce$, summing up over $0 \leq i \leq j-1$ and exploiting the telescopic sum, we obtain
\begin{equation*}
\begin{split}
& \frac{1}{2} \norm{\nabla\mmmh^j}{\LLL^2(\omega)}^2
+ \frac{\alpha k}{\Ce} \sum_{i=0}^{j-1} \norm{\vvvh^i}{\LLL^2(\omega)}^2
+ k^2 \left(\theta - \frac{1}{2}\right) \sum_{i=0}^{j-1} \norm{\nabla \vvvh^i}{\LLL^2(\omega)}^2 \\
& \quad = \frac{1}{2} \norm{\nabla\mmmh^0}{\LLL^2(\omega)}^2
+ \frac{k}{\Ce} \sum_{i=0}^{j-1} \left[ \left(\ppih(\mmmh^i),\vvvh^i \right)_{\omega} + \left(\fff^i,\vvvh^i\right)_{\omega} + c \left(\sssh^i,\vvvh^i\right)_{\omega} \right].
\end{split}
\end{equation*}
The Cauchy-Schwarz inequality and the Young inequality, together with assumption~(H2), yield for any $\eps>0$
\begin{equation*}
\begin{split}
&\frac{1}{2} \norm{\nabla \mmmh^j}{\LLL^2(\omega)}^2
+ \frac{k}{\Ce} \left(\alpha-\frac{2+c}{2}\eps\right) \sum_{i=0}^{j-1}\norm{\vvv_h^i}{\LLL^2(\omega)}^2
+ k^2 \left(\theta-\frac{1}{2}\right) \sum_{i=0}^{j-1} \norm{\nabla \vvv_h^i}{\LLL^2(\omega)}^2 \\
& \quad \leq \frac{1}{2}\norm{\nabla \mmmh^0}{\LLL^2(\omega)}^2
+ \frac{k}{2 \eps \Ce} \sum_{i=0}^{j-1} \left[ C_{\ppi}^2 \norm{\mmmh^i}{\LLL^2(\omega)}^2 + \norm{\fff^i}{\LLL^2(\omega)}^2 + c \norm{\sssh^i}{\LLL^2(\omega)}^2 \right].
\end{split}
\end{equation*}
From Proposition~\ref{prop:discrete_wellposedness}, we deduce
\begin{equation*}
k \sum_{i=0}^{j-1} \norm{\mmmh^i}{\LLL^2(\omega)}^2
\leq C' \left(1 + k^2 \sum_{i=0}^{j-1} \norm{\vvvh^i}{\LLL^2(\omega)}^2\right),
\end{equation*}
where the constant $C'>0$ depends only on $\abs{\omega}$, $T$ and $C_*$.
We thus obtain
\begin{equation*}
\begin{split}
&\frac{1}{2} \norm{\nabla \mmmh^j}{\LLL^2(\omega)}^2
+ \frac{k}{\Ce} \left(\alpha-\frac{2+c}{2}\eps-\frac{k C_{\ppi}^2 C'}{2 \eps}\right) \sum_{i=0}^{j-1}\norm{\vvv_h^i}{\LLL^2(\omega)}^2
+ k^2 \left(\theta-\frac{1}{2}\right) \sum_{i=0}^{j-1} \norm{\nabla \vvv_h^i}{\LLL^2(\omega)}^2 \\
& \quad \leq \frac{1}{2}\norm{\nabla \mmmh^0}{\LLL^2(\omega)}^2
+ \frac{C'C_{\ppi}^2}{2 \eps \Ce}
+ \frac{k}{2 \eps \Ce} \sum_{i=0}^{j-1} \left[\norm{\fff^i}{\LLL^2(\omega)}^2 + c \norm{\sssh^i}{\LLL^2(\omega)}^2 \right].
\end{split}
\end{equation*}
Note that $\theta>1/2$. If we choose $\eps<2\alpha/(2+c)$, for $k<k_0:=\eps\left(2 \alpha - (2+c)\eps\right)/\left(C_{\ppi}^2 C'\right)$ all the coefficients on the left-hand side are positive.
From the regularity of $\fff$, assumption~(H1), and the boundedness from Proposition~\ref{lem:discrete_stability1}, we know that the right-hand side is uniformly bounded.
This yields the estimate~\eqref{eq:stability2}.
\end{proof}
\begin{corollary} \label{lem:boundedness}
Under the assumptions of Proposition~\ref{lem:discrete_stability2}, and if $k < k_0$, the sequences $\left\{\mmmhk\right\}$, $\left\{\mmmhk^{\pm}\right\}$, $\left\{\Pi_h\mmmhk^+\right\}$ and $\left\{\vvvhk^{-}\right\}$ are uniformly bounded.
In particular, it holds
\begin{equation*}
\norm{\mmmhk}{\HHH^1(\omega_T)}
+ \norm{\mmmhk^{\pm}}{L^2(0,T;\HHH^1(\omega))}
+ \norm{\Pi_h\mmmhk^+}{L^2(0,T;\HHH^1(\omega))}
+ \norm{\vvvhk^{-}}{\LLL^2(\omega_T)} \leq C,
\end{equation*}
where the constant $C>0$ depends only on the data and $k_0$, but is independent of the discretization parameters $h$ and $k$.
\end{corollary}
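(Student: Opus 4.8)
The plan is to assemble all four bounds directly from the a~priori estimates already established, with no new ingredient: the stability estimate~\eqref{eq:stability2} of Proposition~\ref{lem:discrete_stability2}, the nodewise and $\LLL^2$ bounds~\eqref{eq:nodewise}--\eqref{eq:boundedness_mh} of Proposition~\ref{prop:discrete_wellposedness}, the energy-decay property~\eqref{eq:energyDecay} of the nodal projection $\Pi_h$, and the elementary structure of the time interpolants~\eqref{eq:time_approx}. Throughout, $1\le j\le N$ is arbitrary and $0<k<k_0$.

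First I would treat $\vvvhk^{-}$: since $\vvvhk^{-}(t)=\vvvh^i$ on $[t_i,t_{i+1})$, we have $\norm{\vvvhk^{-}}{\LLL^2(\omega_T)}^2 = k\sum_{i=0}^{N-1}\norm{\vvvh^i}{\LLL^2(\omega)}^2\le C$ by~\eqref{eq:stability2} with $j=N$; the same estimate gives $\norm{\nabla\mmmh^j}{\LLL^2(\omega)}^2\le C$ uniformly in $j$. For the $\LLL^2$-part of $\mmmh^j$, I would insert the bound $k\sum_{\ell}\norm{\vvvh^\ell}{\LLL^2(\omega)}^2\le C$ into~\eqref{eq:boundedness_mh}, using $k^2\sum_{\ell=0}^{i}\norm{\vvvh^\ell}{\LLL^2(\omega)}^2 = k\bigl(k\sum_{\ell=0}^{i}\norm{\vvvh^\ell}{\LLL^2(\omega)}^2\bigr)\le kC\le TC$, to obtain $\norm{\mmmh^j}{\LLL^2(\omega)}\le C$ uniformly in $j$ (with~(H1) covering $j=0$). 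Hence $\norm{\mmmh^j}{\HHH^1(\omega)}\le C$ for all $j$, and integrating in time yields $\norm{\mmmhk^{\pm}}{L^2(0,T;\HHH^1(\omega))}^2 = k\sum_j\norm{\mmmh^j}{\HHH^1(\omega)}^2\le TC$.

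For $\Pi_h\mmmhk^{+}$, note that by~\eqref{eq:nodewise} each $\mmmh^{i+1}$ lies in $\UU_h$, so $\Pi_h\mmmh^{i+1}$ is well defined with $\norm{\Pi_h\mmmh^{i+1}}{\LLL^\infty(\omega)}=1$, hence $\norm{\Pi_h\mmmh^{i+1}}{\LLL^2(\omega)}^2\le\abs{\omega}$, while~\eqref{eq:energyDecay} gives $\norm{\nabla\Pi_h\mmmh^{i+1}}{\LLL^2(\omega)}\le c_{\Pi}\norm{\nabla\mmmh^{i+1}}{\LLL^2(\omega)}\le C$; integrating in time bounds $\norm{\Pi_h\mmmhk^{+}}{L^2(0,T;\HHH^1(\omega))}$. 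Finally, for $\mmmhk$ in $\HHH^1(\omega_T)$: since $\partial_t\mmmhk(t)=\vvvh^i=\vvvhk^{-}(t)$ on $[t_i,t_{i+1})$, the time-derivative contribution equals $\norm{\vvvhk^{-}}{\LLL^2(\omega_T)}$, already controlled; and for each $t\in[t_i,t_{i+1})$, $\mmmhk(t)$ is a convex combination of $\mmmh^i$ and $\mmmh^{i+1}$, so by convexity of the norm both $\norm{\mmmhk(t)}{\LLL^2(\omega)}$ and $\norm{\nabla\mmmhk(t)}{\LLL^2(\omega)}$ are dominated by the (already uniformly bounded) corresponding quantities of $\mmmh^i,\mmmh^{i+1}$; integrating over $(0,T)$ completes the estimate.

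I do not expect a genuine obstacle: this is a bookkeeping corollary of the preceding stability results. The only points needing a little care are the elementary trick $k^2\sum=k(k\sum)$, which upgrades the $L^2(0,T;\cdot)$-type control of $\vvvh$ to a bound on $\mmmh^j$ that is uniform in $j$ via~\eqref{eq:boundedness_mh}, and the observation that the piecewise-affine interpolant $\mmmhk$ inherits its pointwise-in-time bounds from its nodal values purely by convexity of norms.
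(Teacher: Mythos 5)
Your proof is correct and follows the same route the paper indicates in its one-line argument (which simply cites Propositions~\ref{prop:discrete_wellposedness} and~\ref{lem:discrete_stability2} together with~\eqref{eq:energyDecay}); you have merely filled in the bookkeeping, including the $k^2\sum = k(k\sum)$ trick to turn~\eqref{eq:boundedness_mh} into a uniform-in-$j$ bound and the convexity argument for the piecewise-affine interpolant.
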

\begin{proof}
The result follows from the boundedness of the discrete functions $\left\{\left(\vvv_h^i,\mmmh^{i+1}\right)\right\}_{0\leq i \leq N-1}$ from Proposition~\ref{prop:discrete_wellposedness} and Proposition~\ref{lem:discrete_stability2}, and from~\eqref{eq:energyDecay}.
\end{proof}
\noindent We can now proceed with step~(ii) of the proof and conclude the existence of weakly convergent subsequences.
\begin{proposition}\label{lem:subsequences}
Suppose that the assumptions of Theorem~\ref{thm:convergence}{{\rm(a)}} are satisfied.
Then, there exist $\mmm\in\HHH^1(\omega_T) \cap L^{\infty}(0,T;\HHH^1(\omega))$ and $\sss\in L^2(0,T;\HHH^1(\Omega)) \cap L^{\infty}(0,T;\LLL^2(\Omega)) \cap H^1(0,T;\wtd\HHH^{-1}(\Omega))$, with $\abs{\mmm}=1$ a.e.\ in $\omega_T$, such that there holds
\begin{subequations}\label{eq:subsequences}
\begin{align}
\label{eq:subsequences1} \mmmhk \subweakto \mmm & \text{ in } \HHH^1(\omega_T),\\
\mmmhk, \mmmhk^\pm, \Pi_h\mmmhk^+ \subweakto \mmm & \text{ in } L^2(0,T;\HHH^1(\omega)), \\
\mmmhk, \mmmhk^\pm, \Pi_h\mmmhk^+ \subto \mmm & \text{ in } \LLL^2(\omega_T),\\
\label{eq:subsequences4} \vvvhk^- \subweakto \mmmt & \text{ in } \LLL^2(\omega_T),\\
\label{eq:subsequences5} \ssshk, \ssshk^\pm \subweakto \sss & \text{ in } L^2(0,T;\HHH^1(\Omega)),\\
\label{eq:subsequences6} \partial_t \ssshk \subweakto \ssst & \text{ in } L^2(0,T;\wtd\HHH^{-1}(\Omega))
\end{align}
\end{subequations}
for $(h,k) \to (0,0)$.
Moreover, there exists one subsequence for which~\eqref{eq:subsequences} holds simultaneously.
\end{proposition}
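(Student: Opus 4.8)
The plan is to combine the a priori bounds established in Proposition~\ref{lem:discrete_stability1}, Corollary~\ref{lem:boundedness3}, Proposition~\ref{lem:boundedness2}, Proposition~\ref{lem:discrete_stability2}, and Corollary~\ref{lem:boundedness} with standard weak-compactness arguments (reflexivity of Hilbert spaces, Banach--Alaoglu, and the Aubin--Lions--Simon compactness lemma). First I would extract, by reflexivity of $\HHH^1(\omega_T)$, a subsequence of $\left\{\mmmhk\right\}$ converging weakly in $\HHH^1(\omega_T)$ to some limit $\mmm$; this immediately gives~\eqref{eq:subsequences1}, and since $\partial_t\mmmhk = \vvvhk^-$ on each time slab, weak convergence of the time derivative in $\LLL^2(\omega_T)$ identifies the weak limit of $\left\{\vvvhk^-\right\}$ as $\mmmt$, yielding~\eqref{eq:subsequences4}. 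Next I would invoke the Aubin--Lions--Simon lemma with the Gelfand triple $\HHH^1(\omega) \subset \LLL^2(\omega) \subset \HHH^1(\omega)^*$: the bound on $\mmmhk$ in $L^2(0,T;\HHH^1(\omega)) \cap H^1(0,T;\LLL^2(\omega))$ (the latter from $\partial_t\mmmhk \in \LLL^2(\omega_T)$) gives a subsequence converging strongly in $\LLL^2(\omega_T)$, necessarily to the same $\mmm$.

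The strong $\LLL^2(\omega_T)$ convergence of $\mmmhk$ transfers to $\mmmhk^\pm$ and to $\Pi_h\mmmhk^+$ by controlling the differences. For $\mmmhk^\pm$ one uses $\norm{\mmmhk - \mmmhk^\pm}{\LLL^2(\omega_T)}^2 \lesssim k^2 \sum_i \norm{\vvvh^i}{\LLL^2(\omega)}^2 \cdot k \lesssim k$ (via~\eqref{eq:stability2} and the fact that on each slab the piecewise-linear and piecewise-constant interpolants differ by at most $k\vvvh^i$), so these differences vanish as $k\to0$; since $\left\{\mmmhk^\pm\right\}$ are also bounded in $L^2(0,T;\HHH^1(\omega))$ they share the weak $L^2(0,T;\HHH^1(\omega))$ limit $\mmm$. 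For $\Pi_h\mmmhk^+$ one estimates the nodal interpolation error $\norm{\Pi_h\mmmh^{i+1} - \mmmh^{i+1}}{\LLL^2(\omega)}$ using Lemma~\ref{lem:norm_equiv} together with the nodewise identity~\eqref{eq:nodewise}: at a node $\zzz$, $\abs{\Pi_h\mmmh^{i+1}(\zzz) - \mmmh^{i+1}(\zzz)} = \abs{\abs{\mmmh^{i+1}(\zzz)} - 1} \le \abs{\mmmh^{i+1}(\zzz)}^2 - 1 = k^2\sum_{\ell}\abs{\vvv_h^\ell(\zzz)}^2$ (using $\abs{\mmmh^{i+1}(\zzz)}\ge1$), which after the norm equivalence and~\eqref{eq:stability2} yields an $\bigO{k}$ bound on the $\LLL^2$-difference; boundedness of $\left\{\Pi_h\mmmhk^+\right\}$ in $L^2(0,T;\HHH^1(\omega))$ from Corollary~\ref{lem:boundedness} then pins down the weak $\HHH^1$ limit. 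The pointwise constraint $\abs{\mmm}=1$ a.e.\ in $\omega_T$ follows because $\abs{\Pi_h\mmmhk^+} = 1$ a.e.\ and $\Pi_h\mmmhk^+ \to \mmm$ strongly in $\LLL^2(\omega_T)$ (a subsequence converges a.e.); alternatively, $\mmmhk \to \mmm$ a.e.\ together with $\abs{\abs{\mmmhk(t,\xxx)} - 1}\to 0$. For the $L^\infty$-in-time bounds on $\mmm$ one passes to the limit in the uniform $L^\infty(0,T;\HHH^1(\omega))$ bound on $\mmmhk^+$ using weak-$*$ lower semicontinuity.

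The spin-accumulation part is more routine: Corollary~\ref{lem:boundedness3} gives uniform bounds on $\left\{\ssshk\right\},\left\{\ssshk^\pm\right\}$ in $L^2(0,T;\HHH^1(\Omega))$ and $L^\infty(0,T;\LLL^2(\Omega))$, and Proposition~\ref{lem:boundedness2} gives a uniform bound on $\left\{\partial_t\ssshk\right\}$ in $L^2(0,T;\wtd\HHH^{-1}(\Omega))$. Banach--Alaoglu (applied in the reflexive space $L^2(0,T;\HHH^1(\Omega))$ and, for the $L^\infty(0,T;\LLL^2(\Omega))$ bound, weak-$*$ in the dual of $L^1(0,T;\LLL^2(\Omega))$) extracts a subsequence with $\ssshk \subweakto \sss$ and $\partial_t\ssshk \subweakto \ssst$, giving~\eqref{eq:subsequences5}--\eqref{eq:subsequences6}; the difference estimate $\norm{\ssshk - \ssshk^\pm}{\LLL^2(\Omega_T)} \lesssim k\,\norm{\partial_t\ssshk}{\LLL^2(\Omega_T)}$ — or rather the corresponding slab-wise bound via $\left\{\sssh^{i+1}-\sssh^i\right\}$ from~\eqref{eq:stability1} — ensures $\ssshk^\pm$ share the same weak limit. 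One must also check that the weak limit of $\partial_t\ssshk$ is indeed the distributional time derivative of $\sss$, which is immediate from testing with smooth time-dependent functions and passing to the limit. Finally, a standard diagonal-subsequence argument combines all the above extractions into a single subsequence along which all of~\eqref{eq:subsequences} hold simultaneously, and the regularity claims for $\sss$ collect the individual bounds.

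The main obstacle I anticipate is handling the nodal projection term $\Pi_h\mmmhk^+$: one must show that, even though $\mmmh^{i+1}$ is \emph{not} renormalized in Algorithm~\ref{alg} (unlike the classical tangent plane scheme), the projected quantity $\Pi_h\mmmhk^+$ still converges to the same limit $\mmm$ and the error is controlled without recourse to an angle condition. This is precisely where the nodewise identity~\eqref{eq:nodewise} and the linear estimate $\abs{t-1} \le t^2 - 1$ for $t \ge 1$ are essential, and it is the step that distinguishes this analysis from the cited works; the rest of the proof is an assembly of well-known compactness tools.
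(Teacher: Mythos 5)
Your overall strategy is the same as the paper's: uniform bounds from Proposition~\ref{lem:discrete_stability1}, Proposition~\ref{lem:boundedness2}, Proposition~\ref{lem:discrete_stability2}, and the corollaries, followed by weak compactness, Rellich/Aubin--Lions for strong $\LLL^2(\omega_T)$ convergence, and identification of limits by controlling differences of the time interpolants. Using Aubin--Lions--Simon where the paper uses the Rellich embedding $\HHH^1(\omega_T)\Subset\LLL^2(\omega_T)$ is a cosmetic difference. The spin-accumulation part is essentially correct once you replace the (incorrect) bound $\norm{\ssshk-\ssshk^\pm}{\LLL^2(\Omega_T)}\lesssim k\norm{\partial_t\ssshk}{\LLL^2(\Omega_T)}$ (which is unavailable, since $\partial_t\ssshk$ is only bounded in $L^2(0,T;\wtd\HHH^{-1}(\Omega))$) by the slab-wise computation using $\sum_i\norm{\sssh^{i+1}-\sssh^i}{\LLL^2(\Omega)}^2$ from~\eqref{eq:stability1}, which you anticipate yourself. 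However, there are two genuine gaps in the treatment of the magnetization and the unit-length constraint, which are precisely the delicate points here.

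First, the claim that $\abs{\Pi_h\mmmhk^+}=1$ a.e.\ is false. The nodal projection $\Pi_h\pphi_h$ is a piecewise \emph{linear} function whose nodal values are unit vectors; in the interior of a tetrahedron its value is a convex combination of the four vertex vectors, and its modulus is in general \emph{strictly less} than one. The paper only asserts $\norm{\Pi_h\pphi_h}{\LLL^\infty(\omega)}=1$, which is a supremum statement, not an a.e.\ pointwise identity. The alternative you offer, ``$\abs{\abs{\mmmhk(t,\xxx)}-1}\to 0$'', is also not established: the nodewise identity~\eqref{eq:nodewise} controls $\abs{\mmmh^j(\zzz)}$ only at nodes $\zzz$, and the behavior of $\abs{\mmmh^j}$ away from nodes has no direct control. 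The paper closes this gap with the nodal interpolant $\II_h\bigl(\abs{\mmmhk^+}^2\bigr)$: an interpolation-plus-inverse estimate gives $\norm{\abs{\mmmhk^+}^2-\II_h(\abs{\mmmhk^+}^2)}{\LLL^2(\omega_T)}\lesssim h^{1/2}$, while~\eqref{eq:nodewise}, the $r=1$ norm equivalence of Lemma~\ref{lem:norm_equiv}, and~\eqref{eq:stability2} give $\norm{\II_h(\abs{\mmmhk^+}^2)-1}{\LLL^1(\omega_T)}\lesssim k$. Combined with $\norm{\abs{\mmm}^2-\abs{\mmmhk^+}^2}{\LLL^1(\omega_T)}\to0$ (which needs the strong $\LLL^2$ convergence of $\mmmhk^+$ and its $\LLL^2$ boundedness, not an $\LLL^\infty$ bound) this yields $\abs{\mmm}=1$ a.e.; this is the missing ingredient in your write-up.

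Second, your direct bound of $\norm{\Pi_h\mmmh^{i+1}-\mmmh^{i+1}}{\LLL^2(\omega)}$ via Lemma~\ref{lem:norm_equiv} with $r=2$ does not go through. The per-node estimate $\abs{\Pi_h\mmmh^{i+1}(\zzz)-\mmmh^{i+1}(\zzz)}\le k^2\sum_\ell\abs{\vvvh^\ell(\zzz)}^2$ becomes, after squaring and applying the $r=2$ norm equivalence, $h^3\sum_\zzz\bigl(k^2\sum_\ell\abs{\vvvh^\ell(\zzz)}^2\bigr)^2$, a quartic in $\vvvh^\ell$ that is \emph{not} controlled by~\eqref{eq:stability2}, which only bounds the quadratic quantity $k\sum_i\norm{\vvvh^i}{\LLL^2(\omega)}^2$. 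Also note that $\mmmh^j$ is \emph{not} uniformly bounded in $\LLL^\infty$ (its nodal values satisfy $\abs{\mmmh^j(\zzz)}^2=1+k^2\sum_\ell\abs{\vvvh^\ell(\zzz)}^2$, which can be large at individual nodes), so one cannot rescue the $\LLL^2$ bound by interpolating against $\norm{\mmmh^j-\Pi_h\mmmh^j}{\LLL^\infty}$ either. The correct route, as in the paper, is to get $\norm{\mmmh^j-\Pi_h\mmmh^j}{\LLL^1(\omega)}\lesssim k$ via the $r=1$ norm equivalence, conclude $\Pi_h\mmmhk^+\subto\mmm$ in $\LLL^1(\omega_T)$ (using the already-available $\LLL^1$ convergence of $\mmmhk^+$ to $\mmm$), and then upgrade to $\LLL^2(\omega_T)$ convergence using that $\Pi_h\mmmhk^+-\mmm$ is bounded in $\LLL^\infty$ (both summands having $\LLL^\infty$ norm $\le1$). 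Your observation that the key tools are~\eqref{eq:nodewise} and the elementary bound $\abs{t-1}\le t^2-1$ for $t\ge1$ is on target; what is missing is the $\LLL^1\!\to\!\LLL^2$ interpolation step and the $\II_h(\abs{\mmmhk^+}^2)$ device for the constraint.
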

\begin{proof}
The boundedness results from Corollary~\ref{lem:boundedness}, in combination with the Eberlein-Smulian theorem, allow us to extract weakly convergent subsequences of $\left\{\mmmhk\right\}$, $\left\{\mmmhk^{\pm}\right\}$, $\left\{\Pi_h\mmmhk^+\right\}$ and $\left\{\vvvhk^-\right\}$.
Let $\mmm\in\HHH^1(\omega_T)$ be such that $\mmmhk\subweakto\mmm$ in $\HHH^1(\omega_T)$.
From the continuous inclusions $\HHH^1(\omega_T) \subset L^2(0,T;\HHH^1(\omega)) \subset \LLL^2(\omega_T)$ and the compact embedding $\HHH^1(\omega_T) \Subset \LLL^2(\omega_T)$, we deduce
\begin{equation*}
\mmmhk \subweakto \mmm \text{ in } L^2(0,T;\HHH^1(\omega)) \quad \text{and} \quad \mmmhk \subto \mmm \text{ in } \LLL^2(\omega_T).
\end{equation*}
With $\norm{\mmmhk-\mmmhk^{\pm}}{\LLL^2(\omega_T)} \leq k \norm{\vvvhk^-}{\LLL^2(\omega_T)}$, we can identify the limits of the subsequences of $\left\{\mmmhk\right\}$ and $\left\{\mmmhk^{\pm}\right\}$.
As $\partial_t\mmmhk=\vvvhk^-$, it clearly holds that $\vvv=\mmmt$ a.e.\ in $\omega_T$.
\par We now prove that the limiting function $\mmm$ satisfies the unit-length constraint.
First, we observe that
\begin{equation} \label{eq:unit1}
\norm{\abs{\mmm}^2-\abs{\mmmhk^+}^2}{L^1(\omega_T)} \leq \norm{\mmm+\mmmhk^+}{\LLL^2(\omega_T)} \norm{\mmm-\mmmhk^+}{\LLL^2(\omega_T)} \subto 0
\end{equation}
for $(h,k)\to 0$.
For $0 \leq i \leq N-1$ and $K \in \TT_h^{\omega}$, a standard interpolation estimate for the piecewise linear function $\mmmh^{i+1} \in \SS^1(\TT_h^{\omega})^3$ yields
\begin{equation*}
\norm{\abs{\mmmh^{i+1}}^2-\II_h\left(\abs{\mmmh^{i+1}}^2\right)}{L^2(K)}
\lesssim h_K^2 \norm{D^2\abs{\mmmh^{i+1}}^2}{L^2(K)}
\lesssim h_K^2 \norm{\nabla\mmmh^{i+1}}{\LLL^4(K)}^2
\lesssim h_K^{1/2} \norm{\nabla\mmmh^{i+1}}{\LLL^2(K)}^2.
\end{equation*}
From Proposition~\ref{lem:discrete_stability2}, we obtain
\begin{equation} \label{eq:unit2}
\norm{\abs{\mmmhk^+}^2-\II_h\left(\abs{\mmmhk^+}^2\right)}{L^2(\omega_T)} \lesssim h^{1/2}.
\end{equation}
For all $1 \leq j \leq N$, Proposition~\ref{prop:discrete_wellposedness} and the discrete norm equivalence of Lemma~\ref{lem:norm_equiv} with $r=2$ yield
\begin{equation*}
\norm{\II_h\left( \abs{\mmmh^j}^2 \right)-1}{L^1(\omega)}
\lesssim \sum_{\zzz \in \NN_h^{\omega}} h^3 \abs{\abs{\mmmh^j(\zzz)}^2 - 1}
\leq k^2 \sum_{i=0}^{j-1} h^3 \sum_{\zzz \in \NN_h^{\omega}} \abs{\vvvh^{i}(\zzz)}^2
\lesssim k^2 \sum_{i=0}^{j-1} \norm{\vvvh^i}{\LLL^2(\omega)}^2.
\end{equation*}
Then, from Proposition~\ref{lem:discrete_stability2}, we deduce
\begin{equation} \label{eq:unit3}
\norm{\II_h\left(\abs{\mmmhk^+}^2\right)-1}{L^1(\omega_T)} \lesssim k.
\end{equation}
Combining~\eqref{eq:unit2}--\eqref{eq:unit3}, the triangle inequality thus yields that $\abs{\mmmhk^+}^2 \subto 1$ in $\LLL^1(\omega_T)$ for $(h,k)\to 0$, whence $\abs{\mmm}=1$ a.e.\ in $\omega_T$ follows from~\eqref{eq:unit1}.
\par For $\xxx \in \R^3$ with $\abs{\xxx}\geq 1$, it holds that
\begin{equation*}
\abs{\xxx-\frac{\xxx}{\abs{\xxx}}} = \abs{\xxx}-1 = \frac{\abs{\xxx}^2-1}{\abs{\xxx}+1} \leq \frac{1}{2}\left(\abs{\xxx}^2-1\right).
\end{equation*}
Due to~\eqref{eq:nodewise}, this yields for all $1 \leq j \leq N$
\begin{equation*}
\abs{\mmmh^j(\zzz)-\Pi_h\mmmh^j(\zzz)} \leq \frac{1}{2}\left(\abs{\mmmh^j(\zzz)}^2 -1 \right) = \frac{1}{2} k^2 \sum_{i=0}^{j-1} \abs{\vvvh^i(\zzz)}^2,
\end{equation*}
whence by virtue of Proposition~\ref{lem:discrete_stability2}
\begin{equation*}
\norm{\mmmh^j-\Pi_h\mmmh^j}{\LLL^1(\omega)}
\lesssim k^2 \sum_{i=0}^{j-1} \norm{\vvvh^j}{\LLL^2(\omega)}^2
\lesssim k.
\end{equation*}
This implies $\Pi_h\mmmhk^+ \subto \mmm$ in $\LLL^1(\omega_T)$ as $(h,k) \to (0,0)$.
Since $\norm{\Pi_h\mmmhk^+}{\LLL^{\infty}(\omega_T)}+\norm{\mmm}{\LLL^{\infty}(\omega_T)}=2$, we have $\Pi_h\mmmhk^+ \subto \mmm$ even in $\LLL^2(\omega_T)$ as well as $\Pi_h\mmmhk^+ \subweakto \mmm$ in $L^2(0,T;\HHH^1(\omega))$.
\par From Corollary~\ref{lem:boundedness3}, we similarly deduce the existence of weakly convergent subsequences of $\left\{\ssshk\right\}$ and $\left\{\ssshk^{\pm}\right\}$.
Due to Proposition~\ref{lem:discrete_stability1}, the quantity $\sum_{i=0}^{N-1} \norm{\sssh^{i+1}-\sssh^i}{\LLL^2(\Omega)}^2$ is bounded.
This allows to identify the weak limits, since
\begin{equation*}
\norm{\ssshk-\ssshk^{\pm}}{\LLL^2(\Omega_T)} \lesssim k \sum_{i=0}^{N-1} \norm{\sssh^{i+1}-\sssh^i}{\LLL^2(\Omega)}^2 \longrightarrow 0
\quad \text{ for } (h,k) \to 0.
\end{equation*}
Finally, from Proposition~\ref{lem:boundedness2}, we deduce the existence of a weakly convergent subsequence of $\left\{\partial_t\ssshk\right\}$, and it is easy to see that its limit is precisely $\ssst$, cf.~\cite[Section~7.1.2, Theorem~3]{evans2010}.
This establishes~\eqref{eq:subsequences5}--\eqref{eq:subsequences6} and thus concludes the proof.
\end{proof}
\begin{remark}
As the constants which guarantee the boundedness of Proposition~\ref{lem:discrete_stability1}, Corollary~\ref{lem:boundedness3} and Proposition~\ref{lem:boundedness2}, are independent of $T$, we deduce that $\sss \in L^2(\R^+;\HHH^1(\Omega))\cap L^2(\R^+;\LLL^2(\Omega))\cap H^1(\R^+;\wtd\HHH^{-1}(\Omega))$.
\end{remark}
\noindent We have collected all the ingredients for the proof of our main theorem.
\begin{proof}[Proof of Theorem~\ref{thm:convergence}]
The result of part~(a) follows directly from Proposition~\ref{lem:subsequences}.
To conclude the proof of part~(b), it remains to identify the limiting functions $(\mmm,\sss)$ with a weak solution of SDLLG in the sense of Definition~\ref{def:weak_sol}.
\par To check~\eqref{eq:weak_spin_llg}, we essentially proceed as in~\cite{alouges2008a}.
Let $\vphi \in \CCC^\infty(\overline{\omega_T})$.
For $0 \leq i \leq N-1$,  we test~\eqref{eq:alg:1} with respect to $\pphi_h = \II_h \left(\left(\mmmhk^-\times \vphi \right)(t_i)\right) \in \KK_{\mmmh^i}$ , with $\II_h$ being the nodal interpolation operator onto $\SS^1(\TT_h^{\omega})^3$.
Multiplication with $k$ and summation over $0 \leq i \leq N-1$ yield
\begin{equation*}
\begin{split}
& \left(\alpha\vvvhk^- + \mmmhk^- \times \vvvhk^-,\II_h \left(\mmmhk^-\times \vphi_k^-\right) \right)_{\omega_T} \\
& \quad = -\Ce \left(\nabla\left(\mmmhk^- + \theta k \vvvhk^- \right), \nabla\II_h \left(\mmmhk^-\times \vphi_k^-\right) \right)_{\omega_T}
+ \left( \ppih(\mmmhk^-), \II_h \left(\mmmhk^-\times \vphi_k^-\right) \right)_{\omega_T} \\
& \qquad + \left( \fff_k^-, \II_h \left(\mmmhk^-\times \vphi_k^-\right) \right)_{\omega_T}
+ c \left( \ssshk^-, \II_h \left(\mmmhk^-\times \vphi_k^-\right) \right)_{\omega_T},
\end{split}
\end{equation*}
where $\II_h \left(\mmmhk^-\times \vphi_k^-\right)(t)=\II_h \left(\left(\mmmhk^-\times \vphi \right)(t_i)\right)$ for all $t \in [t_i,t_{i+1})$.
From the well-known approximation properties of $\II_h$ and the boundedness of $\sqrt{k} \norm{\nabla\vvvhk^-}{\LLL^2(\omega_T)}$ from Proposition~\ref{lem:discrete_stability2} for $\theta \in (1/2,1]$, we deduce
\begin{equation*}
\begin{split}
& \left(\alpha\vvvhk^- + \mmmhk^- \times \vvvhk^-,\mmmhk^-\times \vphi_k^- \right)_{\omega_T}
+ \Ce \left(\nabla\left(\mmmhk^- + \theta k \vvvhk^- \right), \nabla\left(\mmmhk^-\times \vphi_k^-\right) \right)_{\omega_T} \\
& \quad - \left( \ppih(\mmmhk^-), \mmmhk^-\times \vphi_k^- \right)_{\omega_T}
- \left( \fff_k^-, \mmmhk^-\times \vphi_k^- \right)_{\omega_T}
- c \left( \ssshk^-, \mmmhk^-\times \vphi_k^- \right)_{\omega_T}
= \bigO{h}.
\end{split}
\end{equation*}
Passing to the limit for $(h,k) \to (0,0)$, we obtain
\begin{equation*}
\left(\alpha\mmmt + \mmm \times \mmmt,\mmm \times \vphi \right)_{\omega_T}
= - \Ce \left(\nabla\mmm,\nabla\left(\mmm\times\vphi\right) \right)_{\omega_T}
+ \left(\ppi(\mmm)+\fff+c\sss,\mmm\times\vphi\right)_{\omega_T}
\end{equation*}
In the latter, we have used the convergence properties from Proposition~\ref{lem:subsequences}, assumption~(H3) for the general field contribution, as well as $\fff_k^-\weakto\fff$ and $\mmmhk^-\times \vphi_k^- \subto \mmm\times\vphi$ in $\LLL^2(\omega_T)$.
\par Direct calculations and standard properties of the cross product yield the identities
\begin{gather*}
\left(\nabla\mmm,\nabla(\mmm\times\vphi) \right)_{\omega_T} = \left(\nabla\mmm\times\mmm,\nabla\vphi \right)_{\omega_T},
\quad \left(\mmmt,\mmm \times \vphi \right)_{\omega_T} = \left(\mmmt \times \mmm, \vphi \right)_{\omega_T}, \\
\left(\mmm \times \mmmt,\mmm \times \vphi \right)_{\omega_T} = \left(\mmmt,\vphi \right)_{\omega_T},
\quad \left(\ppi(\mmm),\mmm\times\vphi\right)_{\omega_T} = \left(\ppi(\mmm)\times\mmm,\vphi\right)_{\omega_T}, \\
\left(\fff,\mmm\times\vphi\right)_{\omega_T} = \left(\fff\times\mmm,\vphi\right)_{\omega_T},
\quad \left(\sss,\mmm\times\vphi\right)_{\omega_T} = \left(\sss\times\mmm,\vphi\right)_{\omega_T},
\end{gather*}
from which, by density, we deduce~\eqref{eq:weak_spin_llg}.
\par To check~\eqref{eq:weak_spin_diff}, let $\vphi \in C^{\infty}(0,T;\CCC^{\infty}(\overline\Omega))$.
Given $0 \leq i \leq N-1$, let $t \in [t_i,t_{i+1})$.
In~\eqref{eq:alg:3} we choose the test function $\zzeta_h = \II_h \left(\vphi(t)\right) \in \SS^1(\TT_h^{\Omega})^3$.
Integration in time over $(t_i,t_{i+1})$ and summation over $0 \leq i \leq N-1$ yield
\begin{equation} \label{eq:last}
\begin{split}
& \left( \partial_t \ssshk,\II_h\vphi \right)_{\Omega_T}
+ \left( D_0 \nabla \ssshk^+, \nabla \II_h\vphi \right)_{\Omega_T}
- \beta\beta' \left( D_0 \Pi_h\mmmhk^+ \otimes \left(\nabla \ssshk^+\cdot \Pi_h\mmmhk^+ \right), \nabla\II_h\vphi \right)_{\omega_T} \\
&\quad + \left( D_0 \ssshk^+, \II_h\vphi \right)_{\Omega_T}
+ \left( D_0\left(\ssshk^+ \times \Pi_h\mmmhk^+\right), \II_h\vphi \right)_{\omega_T} \\
&\qquad = \beta \left(\Pi_h\mmmhk^+\otimes\jjj_k^+,\nabla\II_h\vphi \right)_{\omega_T}
- \beta \left(\jjj_k^+\cdot\nnn,\Pi_h\mmmhk^+\cdot\II_h\vphi \right)_{(0,T)\times(\partial\Omega\cap\partial\omega)},
\end{split}
\end{equation}
where $\II_h \vphi(t)=\II_h \left(\vphi(t)\right)$ for all $t \in (0,T)$.
Passing~\eqref{eq:last} to the limit for $(h,k) \to 0$, due to the convergence properties stated in Proposition~\ref{lem:subsequences}, in combination with the standard approximation properties of $\II_h$, we deduce
\begin{equation*}
\begin{split}
& \int_0^T \left\langle\partial_t \sss(t),\vphi(t) \right\rangle
+ \left( D_0 \nabla\sss,\nabla\vphi \right)_{\Omega_T}
- \beta\beta' \left( D_0 \mmm\otimes\left(\nabla\sss\cdot\mmm\right),\nabla\vphi \right)_{\omega_T} \\
& \quad + \left( D_0 \sss,\vphi \right)_{\Omega_T}
+ \left( D_0\left(\sss\times\mmm\right),\vphi \right)_{\omega_T}
= \beta \left(\mmm\otimes\jjj,\nabla\vphi \right)_{\omega_T}
- \beta \left(\jjj\cdot\nnn,\mmm\cdot\vphi\right)_{(0,T)\times(\partial\Omega\cap\partial\omega)}.
\end{split}
\end{equation*}
By density, this is also true for all $\vphi \in C^{\infty}(0,T;\HHH^1(\Omega))$.
Hence in particular for each $\zzeta\in\HHH^1(\Omega)$ and a.e.\ $t \in (0,T)$ we have~\eqref{eq:weak_spin_diff}.
\par Since $\mmmhk\subweakto\mmm$ in $\HHH^1(\omega_T)$ with $\mmmhk(0)=\mmmh^0$, and $\ssshk\subweakto\sss$ in $H^1(0,T;\wtd\HHH^{-1}(\Omega))$ with $\ssshk(0)=\sssh^0$, assumption~(H1) allows to deduce $\mmm(0)=\mmm^0$ and $\sss(0)=\sss^0$ in the sense of traces.
\end{proof}
\section{Energy estimate}
\label{sec:energy}
\noindent In this section, we exploit our constructive convergence proof to derive an energy estimate for weak solutions of SDLLG, which is also meaningful from a physical point of view.
The total magnetic Gibbs free energy from~\eqref{eq:ll_energy} is strongly related to the standard form~\eqref{eq:Llg} of LLG and does not take into account the interaction between the magnetization and spin accumulation.
As we are dealing with the augmented form~\eqref{eq:Spin_llg} of LLG, we extend~\eqref{eq:ll_energy} and define the free energy of the system by
\begin{equation} \label{eq:new_energy}
\EE (\MMM,\SSS)
= \frac{A}{\Ms^2} \int_{\omega}\abs{\nabla\MMM}^2
+ K \int_{\omega}\phi\left(\MMM/\Ms\right)
- \mu_0 \int_{\omega}\HHH_e\cdot\MMM
- \frac{\mu_0}{2} \int_{\omega} \HHH_s(\MMM)\cdot\MMM
-J \int_{\omega}\SSS\cdot\MMM.
\end{equation}
This definition is in agreement with~\eqref{eq:Spin_llg}, since in this case it is easy to see that
\begin{equation*}
- \frac{\delta \EE(\MMM,\SSS)}{\delta \MMM} = \mu_0 \HEFF(\MMM) + J \SSS,
\end{equation*}
where the effective field is given by~\eqref{eq:Eff_field}.
A simple formal computation shows that strong solutions to~\eqref{eq:Spin_llg} satisfy
\begin{equation} \label{eq:energy_decay}
\frac{d\EE}{dt}
= - \frac{\alpha}{\gamma \Ms} \int_{\omega} \abs{\MMMt}^2
- \mu_0 \int_{\omega} \frac{\partial\HHH_e}{\partial t}\cdot\MMM
- J \int_{\omega} \frac{\partial\SSS}{\partial t}\cdot\MMM.
\end{equation}
Neglecting the spin accumulation term and assuming that $\HHH_e$ is constant in time, equation~\eqref{eq:energy_decay} reduces to
\begin{equation*}
\frac{d\EE}{dt}
= - \frac{\alpha}{\gamma \Ms} \int_{\omega} \abs{\MMMt}^2 \leq 0,
\end{equation*}
which reveals the well-known dissipative behavior of solutions to the standard form~\eqref{eq:Llg} of LLG.
\par The main aim of this section is to prove a property corresponding to~\eqref{eq:energy_decay} in the context of weak solutions.
To this end, we move to the nondimensional framework introduced in Section~\ref{sec:problem} and consider the following assumptions:
\begin{itemize}
\item[(H4)] the operator $\ppi:\LLL^2(\omega)\to\LLL^2(\omega)$ from~\eqref{eq:gen_eff_field} is linear, self-adjoint, and bounded;
\item[(H5)] it holds
\begin{equation*}
\ppih(\www_{hk}) \to \ppi(\www) \text{ in } \LLL^2(\omega_T) \quad \text{ as } (h,k) \to 0
\end{equation*}
for any sequence $\www_{hk} \to \www$ in $\LLL^2(\omega_T)$, which is slightly stronger than~(H3);
\item[(H6)] the applied field $\fff$ belongs to $H^1(0,T;\LLL^2(\omega))$.
\end{itemize}
\begin{remark}
For some fixed easy axis $\eee\in\mathbb{S}^2$ and the corresponding uniaxial anisotropy density function $\phi(\mmm)=1-(\mmm\cdot\eee)^2$, the operator
\begin{equation} \label{eq:practical_pi}
\ppi(\mmm)
= \hhh_s(\mmm)-\Cani\nabla\phi(\mmm)
= \hhh_s(\mmm) + 2 \Cani (\eee\cdot\mmm)\mmm
\end{equation}
satisfies~{\rm{(H4)}}.
Moreover, all the stray field discretizations mentioned in~Remark~\ref{rem:discrete_pi} satisfy~{\rm{(H5)}}, see~\cite{bffgpprs2014}.
The operator $\ppi$ from~\eqref{eq:practical_pi} is even well defined and bounded as operator $\ppi:\LLL^p(\omega)\to\LLL^p(\omega)$ for all $1<p<\infty$, see~\cite{praetorius2004}.
Unlike~\cite{akt2012}, the proof of our energy estimate, see Theorem~\ref{thm:energy} below, avoids this additional regularity, but only relies on the energy setting $p=2$.
\end{remark}
\noindent Up to an additive constant, the nondimensional counterpart of~\eqref{eq:new_energy} reads
\begin{equation} \label{eq:energy_adimensional}
\EE(\mmm,\sss)
= \frac{1}{2}\Ce\int_{\omega}\abs{\nabla\mmm}^2
- \int_{\omega}\fff\cdot\mmm
- \frac{1}{2} \int_{\omega} \ppi(\mmm)\cdot\mmm
- c \int_{\omega}\sss\cdot\mmm.
\end{equation}
The following theorem proves an energy estimate which generalizes~\eqref{eq:energy_decay} to weak solutions.
\begin{theorem} \label{thm:energy}
Suppose that assumptions~{\rm{(H1)--(H2)}} and~{\rm{(H4)--(H6)}} are satisfied.
Let $(\mmm,\sss)$ be a weak solution of SDLLG obtained as a weak limit of the finite element solutions from Algorithm~\ref{alg} for $1/2 < \theta \leq 1$.
Then, the energy functional from~\eqref{eq:energy_adimensional} satisfies
\begin{equation} \label{eq:weak_energy_decay}
\begin{split}
& \EE(\mmm(t),\sss(t))
+ \alpha \int_{t'=0}^t \norm{\mmmt(t')}{\LLL^2(\omega)}^2
+ \int_{t'=0}^t \left(\partial_t\fff(t'),\mmm(t') \right)_{\omega}
+ c \int_{t'=0}^t \left\langle\ssst(t'),\mmm(t') \right\rangle \\
& \quad \leq \EE(\mmm_0,\sss_0)
\end{split}
\end{equation}
for almost all $t \in (0,T)$.
\end{theorem}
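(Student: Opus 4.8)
The strategy is to derive a \emph{discrete} counterpart of the formal energy identity~\eqref{eq:energy_decay} by testing the tangent-plane update~\eqref{eq:alg:1} with $\vvvh^i$ --- which is exactly Lemma~\ref{lem:new_stability} --- and then to pass to the limit along the subsequence of Proposition~\ref{lem:subsequences}, using weak lower semicontinuity for the dissipative and exchange terms and \emph{strong} convergence for the coupling terms. Concretely, \textbf{Step 1 (discrete energy inequality):} multiply~\eqref{eq:new_stability} by $k$ and sum over $0\le i\le j-1$; the exchange contribution telescopes, and since $k\vvvh^i=\mmmh^{i+1}-\mmmh^i$ the right-hand side can be rewritten in terms of the time-approximations as $A^j+B^j+cD^j$ with
\[
A^j=\int_0^{t_j}\!\big(\ppih(\mmmhk^-),\partial_t\mmmhk\big)_\omega,\quad
B^j=\int_0^{t_j}\!\big(\fff_k^-,\partial_t\mmmhk\big)_\omega,\quad
D^j=\int_0^{t_j}\!\big(\ssshk^-,\partial_t\mmmhk\big)_\omega.
\]
Because $\theta>1/2$, the nonnegative term $\Ce k^2(\theta-\tfrac12)\sum_i\|\nabla\vvvh^i\|_{\LLL^2(\omega)}^2$ may simply be dropped from the left-hand side --- this is where the inequality (rather than an equality) in~\eqref{eq:weak_energy_decay} originates --- leaving
\[
\alpha\int_0^{t_j}\!\|\partial_t\mmmhk\|_{\LLL^2(\omega)}^2
+\tfrac{\Ce}{2}\|\nabla\mmmh^j\|_{\LLL^2(\omega)}^2
-A^j-B^j-cD^j
\le\tfrac{\Ce}{2}\|\nabla\mmmh^0\|_{\LLL^2(\omega)}^2.
\]

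\textbf{Step 2 (passage to the limit).} Fix $t\in(0,T)$ outside a null set and, for each $(h,k)$ in the subsequence, pick $j$ with $t\in(t_{j-1},t_j)$, so that $t_j\downarrow t$ and $\mmmh^j=\mmmhk^+(t)$. Since $\vvvhk^-=\partial_t\mmmhk\weakto\mmmt$ in $\LLL^2(\omega_T)$ and $t_j>t$, weak lower semicontinuity gives $\liminf\alpha\int_0^{t_j}\|\partial_t\mmmhk\|^2\ge\alpha\int_0^t\|\mmmt\|^2$. As $\mmmhk^+$ is bounded in $L^\infty(0,T;\HHH^1(\omega))$ by Proposition~\ref{lem:discrete_stability2} while $\mmmhk^+\to\mmm$ in $\LLL^2(\omega_T)$, a further subsequence satisfies $\mmmhk^+(\cdot,t)\weakto\mmm(\cdot,t)$ in $\HHH^1(\omega)$ for a.e.\ $t$, whence $\liminf\|\nabla\mmmh^j\|\ge\|\nabla\mmm(t)\|$. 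For $A^j,B^j,cD^j$ I would combine $\vvvhk^-\weakto\mmmt$ with the strong convergences $\ppih(\mmmhk^-)\to\ppi(\mmm)$ in $\LLL^2(\omega_T)$ (assumption~(H5)), $\fff_k^-\to\fff$ in $\LLL^2(\omega_T)$ (valid since $\fff\in C(0,T;\LLL^2(\omega))$ by~(H6)), and $\ssshk^-\to\sss$ in $\LLL^2(\Omega_T)$ (Aubin--Lions lemma applied with the bounds of Corollary~\ref{lem:boundedness3} and Proposition~\ref{lem:boundedness2}, plus uniqueness of the weak limit), absorbing the moving endpoint via $\chi_{(0,t_j)}\to\chi_{(0,t)}$; this yields $A^j\to\int_0^t(\ppi(\mmm),\mmmt)_\omega$, $B^j\to\int_0^t(\fff,\mmmt)_\omega$ and $cD^j\to c\int_0^t(\sss,\mmmt)_\omega$. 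Finally $\tfrac{\Ce}{2}\|\nabla\mmmh^0\|^2\to\tfrac{\Ce}{2}\|\nabla\mmm^0\|^2$ for the standard strongly convergent choices of discrete initial data (e.g.\ $\mmmh^0=\II_h\mmm^0$, $\sssh^0=\PP_h\sss^0$). Collecting these,
\[
\alpha\int_0^t\!\|\mmmt\|_{\LLL^2(\omega)}^2
+\tfrac{\Ce}{2}\|\nabla\mmm(t)\|_{\LLL^2(\omega)}^2
-\int_0^t\!\big(\ppi(\mmm)+\fff+c\sss,\mmmt\big)_\omega
\le\tfrac{\Ce}{2}\|\nabla\mmm^0\|_{\LLL^2(\omega)}^2.
\]

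\textbf{Step 3 (integration by parts in time).} Since $\mmm\in H^1(0,T;\LLL^2(\omega))$, the operator $\ppi$ is linear, bounded and self-adjoint by~(H4), $\fff\in H^1(0,T;\LLL^2(\omega))$ by~(H6), and $\ssst\in L^2(0,T;\wtd\HHH^{-1}(\Omega))$, the three products on the left-hand side may be integrated by parts in time:
\[
\int_0^t(\ppi(\mmm),\mmmt)_\omega=\tfrac12\big[(\ppi(\mmm),\mmm)_\omega\big]_0^t,\qquad
\int_0^t(\fff,\mmmt)_\omega=\big[(\fff,\mmm)_\omega\big]_0^t-\int_0^t(\partial_t\fff,\mmm)_\omega,
\]
\[
c\int_0^t(\sss,\mmmt)_\omega=c\big[(\sss,\mmm)_\omega\big]_0^t-c\int_0^t\langle\ssst,\mmm\rangle.
\]
Substituting these into the inequality of Step~2 and regrouping according to the definition~\eqref{eq:energy_adimensional} of $\EE$ --- the contributions evaluated at $t'=t$ reassemble $\EE(\mmm(t),\sss(t))$, while those at $t'=0$ together with $\tfrac{\Ce}{2}\|\nabla\mmm^0\|^2$ reassemble $\EE(\mmm^0,\sss^0)$, the two copies of $\tfrac{\Ce}{2}\|\nabla\mmm^0\|^2$ cancelling --- delivers precisely~\eqref{eq:weak_energy_decay}.

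\textbf{Main obstacle.} The delicate part is the treatment of $A^j$ and $cD^j$ in Step~2: both pair a discretization of $\ppi(\mmm)$, respectively of $\sss$, against $\partial_t\mmmhk=\vvvhk^-$, which converges only \emph{weakly} in $\LLL^2(\omega_T)$, so strong convergence of the other factor is indispensable. This is exactly why the stronger hypothesis~(H5) must replace~(H3), and why one needs Aubin--Lions compactness for the spin accumulation, which played no role in the proof of Theorem~\ref{thm:convergence}. The self-adjointness~(H4) and the time regularity~(H6) are in turn precisely what legitimise the integration by parts in Step~3, and the pointwise-in-time weak lower semicontinuity of the exchange energy is the remaining technical ingredient.
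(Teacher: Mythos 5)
Your proposal is correct, but it is organised along a genuinely different route from the paper's proof. The paper performs the (discrete) product rule already at the level of the time-step increments: starting from $\EE(\mmmh^{i+1},\sssh^{i+1})-\EE(\mmmh^i,\sssh^i)$, it isolates terms $T_1, T_2, T_3$ and uses the exact algebraic identity $\mmmh^{i+1}=\mmmh^i+k\vvvh^i$ to rewrite $T_1=-k(\dt\fff^{i+1},\mmmh^{i+1})_\omega$, $T_3=-ck(\dt\sssh^{i+1},\mmmh^{i+1})_\omega$, and (via the self-adjointness of $\ppi$) $T_2=-k(\ppi(\mmmh^i)-\ppih(\mmmh^i),\vvvh^i)_\omega-\tfrac12 k^2(\ppi(\vvvh^i),\vvvh^i)_\omega$, so that the discrete analogues of $\int(\partial_t\fff,\mmm)$, $c\int\langle\ssst,\mmm\rangle$, $\tfrac12[(\ppi(\mmm),\mmm)]$ appear \emph{before} any limit is taken, and the passage to the limit is a single step with weak lower semicontinuity. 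You instead sum the raw stability identity from Lemma~\ref{lem:new_stability}, drop the $\theta-\tfrac12$ term, pass to the limit using weak-strong pairings, and only then integrate by parts in time at the continuous level. Both are valid: the paper's approach buys that all manipulations are on piecewise polynomials, so no continuous-time integration-by-parts lemma is needed, and the $\theta-\tfrac12$ term is simply carried as a nonpositive contribution; your approach buys a cleaner separation between discrete compactness and continuous calculus, and makes explicit where the inequality enters (dropping the $\theta-\tfrac12$ term). Your observation that one needs Aubin--Lions to upgrade $\ssshk^-$ to \emph{strong} $\LLL^2(\Omega_T)$ convergence is important and indeed not stated in Proposition~\ref{lem:subsequences}; the paper's passage to the limit in $c\int\langle\partial_t\ssshk,\mmmhk^+\rangle$ is left implicit, and if spelled out requires a comparable compactness argument (or strong convergence of $\mmmhk^+$ in $L^2(0,T;\HHH^1)$, which is not available), so your route makes the needed ingredients clearer. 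Finally, you are right to flag that weak convergence of the discrete initial data in~(H1) is not by itself enough to conclude $\EE(\mmmh^0,\sssh^0)\to\EE(\mmm^0,\sss^0)$, since the exchange term needs $\|\nabla\mmmh^0\|\to\|\nabla\mmm^0\|$; the paper leaves this tacit, whereas you state the required strong convergence explicitly.
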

\begin{proof}
Given $t \in (0,T)$, let $0 \leq j \leq N-1$ such that $t \in [t_j,t_{j+1})$.
Let $0 \leq i \leq j$.
From Lemma~\ref{lem:new_stability}, we get
\begin{equation*}
\begin{split}
& \EE(\mmmh^{i+1},\sssh^{i+1})
- \EE(\mmmh^i,\sssh^i) \\
& \quad = \frac{1}{2}\Ce\left(\norm{\nabla\mmmh^{i+1}}{\LLL^2(\omega)}^2 - \norm{\nabla\mmmh^i}{\LLL^2(\omega)}^2 \right)
- \left(\fff^{i+1},\mmmh^{i+1}\right)_{\omega}
+ \left(\fff^i,\mmmh^i\right)_{\omega} \\
& \qquad - \frac{1}{2}\left(\ppi(\mmmh^{i+1}),\mmmh^{i+1}\right)_{\omega}
+ \frac{1}{2}\left(\ppi(\mmmh^i),\mmmh^i\right)_{\omega}
- c \left(\sssh^{i+1},\mmmh^{i+1}\right)_{\omega}
+ c \left(\sssh^i,\mmmh^i\right)_{\omega} \\
& \quad = -\alpha k \norm{\vvvh^i}{\LLL^2(\omega)}^2
- \Ce k^2 \left(\theta-\frac{1}{2} \right)\norm{\nabla\vvvh^i}{\LLL^2(\omega)}^2
\underbrace{- \left(\fff^{i+1},\mmmh^{i+1}\right)_{\omega}
+ \left(\fff^i,\mmmh^i + k \vvvh^i\right)_{\omega}}_{= \ T_1} \\
& \qquad \underbrace{+ k \left(\ppih(\mmmh^i),\vvvh^i\right)_{\omega}
- \frac{1}{2}\left(\ppi(\mmmh^{i+1}),\mmmh^{i+1}\right)_{\omega}
+ \frac{1}{2}\left(\ppi(\mmmh^i),\mmmh^i\right)_{\omega} }_{= \ T_2} \\
& \qquad \underbrace{- c \left(\sssh^{i+1},\mmmh^{i+1}\right)_{\omega}
+ c \left(\sssh^i,\mmmh^i + k\vvvh^i \right)_{\omega}}_{= \ T_3}.
\end{split}
\end{equation*}
By definition~\eqref{eq:alg:2}, it holds $\mmmh^{i+1} = \mmmh^i + k \vvvh^i$.
We thus obtain
\begin{equation*}
T_1
= - \left(\fff^{i+1}-\fff^i,\mmmh^{i+1}\right)_{\omega}
- \left(\fff^i, \mmmh^{i+1} - \mmmh^i - k \vvvh^i \right)_{\omega}
= - k \left(\dt\fff^{i+1}, \mmmh^{i+1} \right)_{\omega}.
\end{equation*}
Analogously, we see that $T_3 = - ck \left(\dt\sssh^{i+1},\mmmh^{i+1}\right)_{\omega}$.
Since $\ppi$ is linear and self-adjoint, \eqref{eq:alg:2} also reveals
\begin{equation*}
\begin{split}
T_2
& = - k \left(\ppi(\mmmh^i)-\ppih(\mmmh^i),\vvvh^i\right)_{\omega}
+ k \left(\ppi(\mmmh^i),\vvvh^i\right)_{\omega} \\
& \quad - \frac{1}{2} \left(\ppi(\mmmh^{i+1})-\ppi(\mmmh^i),\mmmh^{i+1}\right)_{\omega}
- \frac{1}{2} \left(\ppi(\mmmh^i),\mmmh^{i+1} - \mmmh^i \right)_{\omega} \\
& = - k \left(\ppi(\mmmh^i)-\ppih(\mmmh^i),\vvvh^i\right)_{\omega}
+ \frac{1}{2} k \left(\ppi(\mmmh^i),\vvvh^i\right)_{\omega}
- \frac{1}{2} k \left(\ppi(\vvvh^i),\mmmh^{i+1}\right)_{\omega} \\
& = - k \left(\ppi(\mmmh^i)-\ppih(\mmmh^i),\vvvh^i\right)_{\omega}
- \frac{1}{2} k^2 \left(\ppi(\vvvh^i),\vvvh^i\right)_{\omega}.
\end{split}
\end{equation*}
Altogether, we thus obtain
\begin{equation*}
\begin{split}
& \EE(\mmmh^{i+1},\sssh^{i+1})
- \EE(\mmmh^i,\sssh^i)
+ \alpha k \norm{\vvvh^i}{\LLL^2(\omega)}^2
+ k \left(\dt\fff^{i+1}, \mmmh^{i+1} \right)_{\omega}
+ ck \left(\dt\sssh^{i+1},\mmmh^{i+1}\right)_{\omega} \\
& \quad = - \Ce k^2 \left(\theta-\frac{1}{2} \right)\norm{\nabla\vvvh^i}{\LLL^2(\omega)}^2
- k \left(\ppi(\mmmh^i)-\ppih(\mmmh^i),\vvvh^i\right)_{\omega}
- \frac{1}{2} k^2 \left(\ppi(\vvvh^i),\vvvh^i\right)_{\omega}.
\end{split}
\end{equation*}
Since $\ppi$ is a bounded operator, it follows that
\begin{equation*}
\begin{split}
& \EE(\mmmh^{i+1},\sssh^{i+1})
- \EE(\mmmh^i,\sssh^i)
+ \alpha k \norm{\vvvh^i}{\LLL^2(\omega)}^2
+ k \left(\dt\fff^{i+1}, \mmmh^{i+1} \right)_{\omega} \\
& \quad + ck \left(\dt\sssh^{i+1},\mmmh^{i+1}\right)_{\omega}
+ k \left(\ppi(\mmmh^i)-\ppih(\mmmh^i),\vvvh^i\right)_{\omega}
\lesssim k^2 \norm{\vvvh^i}{\LLL^2(\omega)}^2.
\end{split}
\end{equation*}
Summation over $0 \leq i \leq j$ and the boundedness from Proposition~\ref{lem:discrete_stability2} yield
\begin{equation*}
\begin{split}
& \EE(\mmmhk^+(t_{j+1}),\ssshk^+(t_{j+1}))
- \EE(\mmmh^0,\sssh^0)
+ \alpha \int_{t'=0}^{t_{j+1}}\norm{\vvvhk^-(t')}{\LLL^2(\omega)}^2
+ \int_{t'=0}^{t_{j+1}} \left(\partial_t\fff_k(t'),\mmmhk^+(t')\right)_{\omega} \\
& \quad + c \int_{t'=0}^{t_{j+1}} \left\langle\partial_t\ssshk(t'),\mmmh^+(t')\right\rangle
+ \int_{t'=0}^{t_{j+1}}\left(\ppi(\mmmhk^-(t'))-\ppih(\mmmhk^-(t')),\vvvhk^-(t')\right)_{\omega} \\
& \qquad \lesssim k \int_{t'=0}^{t_{j+1}} \norm{\vvvhk^-(t')}{\LLL^2(\omega)}^2 \lesssim k.
\end{split}
\end{equation*}
The available convergence results on $\mmmhk^{\pm}$, $\ssshk^+$, $\ssshk$, $\vvvhk^-$, and $\fff_k$, as well as assumption~(H5), allow us to employ standard arguments with weakly lower semicontinuity for the limit $(h,k)\to0$. This concludes the proof of~\eqref{eq:weak_energy_decay}.
\end{proof}
\section*{Acknowledgments}
\noindent The research of GH, MP, DP, and DS is supported through the project MA09-029 funded by Vienna Science and Technology Fund (WWTF).
The research of DP and MR is supported through the doctoral school \emph{Dissipation and Dispersion in Nonlinear PDEs}, funded by the Austrian Science Fund (FWF) under grant W1245, and through the innovative projects initiative of Vienna University of Technology.
Additionally, CA and DS acknowledge financial support of the Austrian Federal Ministry of Economy, Family and Youth and the National Foundation for Research, Technology and Development (Christian Doppler Laboratory \emph{Advanced Magnetic Sensing and Materials}).
GH would like to thank the EPSRC (grant EP/K008412/1) and the Royal Society (UF080837) for financial support.
\bibliographystyle{model1b-num-names}
\bibliography{ref/refBooks,ref/refPapers,ref/refPhysicsPapers,ref/refUnpub}

\begin{thebibliography}{34}
\expandafter\ifx\csname natexlab\endcsname\relax\def\natexlab#1{#1}\fi
\providecommand{\bibinfo}[2]{#2}
\ifx\xfnm\relax \def\xfnm[#1]{\unskip,\space#1}\fi
\bibitem[{Abert et~al.(2014)Abert, Hrkac, Praetorius, Ruggeri and
  Suess}]{ahprs2014b}
\bibinfo{author}{C.~Abert}, \bibinfo{author}{G.~Hrkac},
  \bibinfo{author}{D.~Praetorius}, \bibinfo{author}{M.~Ruggeri},
  \bibinfo{author}{D.~Suess}, \bibinfo{title}{Spin transport and magnetization
  dynamics in magnetic and multiferroic systems}, \bibinfo{year}{2014}.
  \bibinfo{note}{In preparation}.
\bibitem[{Alouges(2008)}]{alouges2008a}
\bibinfo{author}{F.~Alouges}, \bibinfo{title}{A new finite element scheme for
  {L}andau-{L}ifchitz equations}, \bibinfo{journal}{Discrete Contin. Dyn. Syst.
  Ser. S} \bibinfo{volume}{1} (\bibinfo{year}{2008}) \bibinfo{pages}{187--196}.
\bibitem[{Alouges and Jaisson(2006)}]{aj2006}
\bibinfo{author}{F.~Alouges}, \bibinfo{author}{P.~Jaisson},
  \bibinfo{title}{Convergence of a finite element discretization for the
  {L}andau-{L}ifshitz equation in micromagnetism}, \bibinfo{journal}{Math.
  Models Methods Appl. Sci.} \bibinfo{volume}{16} (\bibinfo{year}{2006})
  \bibinfo{pages}{299--316}.
\bibitem[{Alouges et~al.(2012)Alouges, Kritsikis and Toussaint}]{akt2012}
\bibinfo{author}{F.~Alouges}, \bibinfo{author}{E.~Kritsikis},
  \bibinfo{author}{J.C. Toussaint}, \bibinfo{title}{A convergent finite element
  approximation for {L}andau-{L}ifschitz-{G}ilbert equation},
  \bibinfo{journal}{Physica B} \bibinfo{volume}{407} (\bibinfo{year}{2012})
  \bibinfo{pages}{1345--1349}.
\bibitem[{Alouges and Soyeur(1992)}]{as1992}
\bibinfo{author}{F.~Alouges}, \bibinfo{author}{A.~Soyeur}, \bibinfo{title}{On
  global weak solutions for {L}andau-{L}ifshitz equations: Existence and
  nonuniqueness}, \bibinfo{journal}{Nonlinear Anal.} \bibinfo{volume}{18}
  (\bibinfo{year}{1992}) \bibinfo{pages}{1071--1084}.
\bibitem[{Ba{\v n}as et~al.(2008)Ba{\v n}as, Bartels and Prohl}]{bbp2008}
\bibinfo{author}{L.~Ba{\v n}as}, \bibinfo{author}{S.~Bartels},
  \bibinfo{author}{A.~Prohl}, \bibinfo{title}{A convergent implicit finite
  element discretization of the {M}axwell-{L}andau-{L}ifshitz-{G}ilbert
  equation}, \bibinfo{journal}{SIAM J. Numer. Anal.} \bibinfo{volume}{46}
  (\bibinfo{year}{2008}) \bibinfo{pages}{1399--1422}.
\bibitem[{Ba{\v n}as et~al.(2013{\natexlab{a}})Ba{\v n}as, Page and
  Praetorius}]{bpp2013}
\bibinfo{author}{L.~Ba{\v n}as}, \bibinfo{author}{M.~Page},
  \bibinfo{author}{D.~Praetorius}, \bibinfo{title}{A convergent linear
  finite-element scheme for the {M}axwell-{L}andau-{L}ifshitz-{G}ilbert
  equation}, \bibinfo{year}{2013}{\natexlab{a}}. \bibinfo{note}{Submitted for
  publication, preprint available at arXiv:1303.4009}.
\bibitem[{Ba{\v n}as et~al.(2013{\natexlab{b}})Ba{\v n}as, Page, Praetorius and
  Rochat}]{bppr2013}
\bibinfo{author}{L.~Ba{\v n}as}, \bibinfo{author}{M.~Page},
  \bibinfo{author}{D.~Praetorius}, \bibinfo{author}{J.~Rochat},
  \bibinfo{title}{A decoupled and unconditionally convergent linear {FEM}
  integrator for the {L}andau-{L}ifshitz-{G}ilbert equation with
  magnetostriction}, \bibinfo{journal}{IMA J. Numer. Anal.}
  (\bibinfo{year}{2013}{\natexlab{b}}). \bibinfo{note}{First published online}.
\bibitem[{Bank and Yserentant(2014)}]{by2014}
\bibinfo{author}{R.E. Bank}, \bibinfo{author}{H.~Yserentant},
  \bibinfo{title}{On the {$H^1$}-stability of the {$L^2$}-projection onto
  finite element spaces}, \bibinfo{journal}{Numer. Math.} \bibinfo{volume}{126}
  (\bibinfo{year}{2014}) \bibinfo{pages}{361--381}.
\bibitem[{Bartels(2005)}]{bartels2005}
\bibinfo{author}{S.~Bartels}, \bibinfo{title}{Stability and convergence of
  finite-element approximation schemes for harmonic maps},
  \bibinfo{journal}{SIAM J. Numer. Anal.} \bibinfo{volume}{43}
  (\bibinfo{year}{2005}) \bibinfo{pages}{220--238}.
\bibitem[{Bartels(2013)}]{bartels2013}
\bibinfo{author}{S.~Bartels}, \bibinfo{title}{Projection-free approximation of
  geometrically constrained partial differential equations},
  \bibinfo{year}{2013}. \bibinfo{note}{Preprint}.
\bibitem[{Bartels et~al.(2008)Bartels, Ko and Prohl}]{bkp2008}
\bibinfo{author}{S.~Bartels}, \bibinfo{author}{J.~Ko},
  \bibinfo{author}{A.~Prohl}, \bibinfo{title}{Numerical analysis of an explicit
  approximation scheme for the {L}andau-{L}ifshitz-{G}ilbert equation},
  \bibinfo{journal}{Math. Comp.} \bibinfo{volume}{77} (\bibinfo{year}{2008})
  \bibinfo{pages}{773--788}.
\bibitem[{Bartels and Prohl(2006)}]{bp2006}
\bibinfo{author}{S.~Bartels}, \bibinfo{author}{A.~Prohl},
  \bibinfo{title}{Convergence of an implicit finite element method for the
  {L}andau-{L}ifshitz-{G}ilbert equation}, \bibinfo{journal}{SIAM J. Numer.
  Anal.} \bibinfo{volume}{44} (\bibinfo{year}{2006})
  \bibinfo{pages}{1405--1419}.
\bibitem[{Berger(1996)}]{berger1996}
\bibinfo{author}{L.~Berger}, \bibinfo{title}{Emission of spin waves by a
  magnetic multilayer traversed by a current}, \bibinfo{journal}{Phys. Rev. B}
  \bibinfo{volume}{54} (\bibinfo{year}{1996}) \bibinfo{pages}{9353--9358}.
\bibitem[{Bruckner et~al.(2014)Bruckner, Feischl, F{\"u}hrer, Goldenits, Page,
  Praetorius, Ruggeri and Suess}]{bffgpprs2014}
\bibinfo{author}{F.~Bruckner}, \bibinfo{author}{M.~Feischl},
  \bibinfo{author}{T.~F{\"u}hrer}, \bibinfo{author}{P.~Goldenits},
  \bibinfo{author}{M.~Page}, \bibinfo{author}{D.~Praetorius},
  \bibinfo{author}{M.~Ruggeri}, \bibinfo{author}{D.~Suess},
  \bibinfo{title}{Multiscale modeling in micromagnetics: {E}xistence of
  solutions and numerical integration}, \bibinfo{journal}{Math. Models Methods
  Appl. Sci.}  (\bibinfo{year}{2014}).
  \bibinfo{note}{DOI:10.1142/S0218202514500328}.
\bibitem[{Evans(2010)}]{evans2010}
\bibinfo{author}{L.C. Evans}, \bibinfo{title}{Partial differential equations},
  volume~\bibinfo{volume}{19} of \textit{\bibinfo{series}{Graduate Studies in
  Mathematics}}, \bibinfo{publisher}{American Mathematical Society},
  \bibinfo{edition}{second} edition, \bibinfo{year}{2010}.
\bibitem[{Fredkin and Koehler(1990)}]{fk1990}
\bibinfo{author}{D.R. Fredkin}, \bibinfo{author}{T.R. Koehler},
  \bibinfo{title}{Hybrid method for computing demagnetization fields},
  \bibinfo{journal}{IEEE Trans. Magn.} \bibinfo{volume}{26}
  (\bibinfo{year}{1990}) \bibinfo{pages}{415--417}.
\bibitem[{Garc{\'i}a-Cervera(2014)}]{garciacervera2014}
\bibinfo{author}{C.J. Garc{\'i}a-Cervera}, \bibinfo{howpublished}{Private
  communication}, \bibinfo{year}{2014}.
\bibitem[{Garc{\'i}a-Cervera and Roma(2006)}]{gcr2006}
\bibinfo{author}{C.J. Garc{\'i}a-Cervera}, \bibinfo{author}{A.~Roma},
  \bibinfo{title}{Adaptive mesh-refinement for micromagnetic simulations},
  \bibinfo{journal}{IEEE Trans. Magn.} \bibinfo{volume}{42}
  (\bibinfo{year}{2006}) \bibinfo{pages}{1648--1654}.
\bibitem[{Garc{\'i}a-Cervera and Wang(2007{\natexlab{a}})}]{gcw2007b}
\bibinfo{author}{C.J. Garc{\'i}a-Cervera}, \bibinfo{author}{X.P. Wang},
  \bibinfo{title}{Spin-polarized currents in ferromagnetic multilayers},
  \bibinfo{journal}{J. Comput. Phys.} \bibinfo{volume}{224}
  (\bibinfo{year}{2007}{\natexlab{a}}) \bibinfo{pages}{699--711}.
\bibitem[{Garc{\'i}a-Cervera and Wang(2007{\natexlab{b}})}]{gcw2007a}
\bibinfo{author}{C.J. Garc{\'i}a-Cervera}, \bibinfo{author}{X.P. Wang},
  \bibinfo{title}{Spin-polarized transport: Existence of weak solutions},
  \bibinfo{journal}{Discrete Contin. Dyn. Syst. Ser. B} \bibinfo{volume}{7}
  (\bibinfo{year}{2007}{\natexlab{b}}) \bibinfo{pages}{87--100}.
\bibitem[{Johnson and Silsbee(1988)}]{js1988}
\bibinfo{author}{M.~Johnson}, \bibinfo{author}{R.H. Silsbee},
  \bibinfo{title}{Coupling of electronic charge and spin at a
  ferromagnetic-paramagnetic metal interface}, \bibinfo{journal}{Phys. Rev. B}
  \bibinfo{volume}{37} (\bibinfo{year}{1988}) \bibinfo{pages}{5312--5325}.
\bibitem[{Karkulik et~al.(2013)Karkulik, Pfeiler and Praetorius}]{kpp2013}
\bibinfo{author}{M.~Karkulik}, \bibinfo{author}{C.~Pfeiler},
  \bibinfo{author}{D.~Praetorius}, \bibinfo{title}{{$L^2$}-orthogonal
  projections onto finite elements on locally refined meshes are
  {$H^1$}-stable}, \bibinfo{year}{2013}. \bibinfo{note}{Submitted for
  publication, preprint available at arXiv:1306.5120}.
\bibitem[{Le et~al.(2014)Le, Page, Praetorius and Tran}]{lppt2013}
\bibinfo{author}{K.N. Le}, \bibinfo{author}{M.~Page},
  \bibinfo{author}{D.~Praetorius}, \bibinfo{author}{T.~Tran},
  \bibinfo{title}{On a decoupled linear {FEM} integrator for
  eddy-current-{LLG}}, \bibinfo{journal}{Appl. Anal.}  (\bibinfo{year}{2014}).
  \bibinfo{note}{DOI:10.1080/00036811.2014.916401}.
\bibitem[{Le and Tran(2013)}]{lt2013}
\bibinfo{author}{K.N. Le}, \bibinfo{author}{T.~Tran}, \bibinfo{title}{A
  convergent finite element approximation for the quasi-static
  {M}axwell-{L}andau-{L}ifshitz-{G}ilbert equations}, \bibinfo{journal}{Comput.
  Math. Appl.} \bibinfo{volume}{66} (\bibinfo{year}{2013})
  \bibinfo{pages}{1389--1402}.
\bibitem[{Mistral et~al.(2008)Mistral, van Kampen, Hrkac, Kim, Devolder,
  Crozat, Chappert, Lagae and Schrefl}]{mkhkdccls2008}
\bibinfo{author}{Q.~Mistral}, \bibinfo{author}{M.~van Kampen},
  \bibinfo{author}{G.~Hrkac}, \bibinfo{author}{J.V. Kim},
  \bibinfo{author}{T.~Devolder}, \bibinfo{author}{P.~Crozat},
  \bibinfo{author}{C.~Chappert}, \bibinfo{author}{L.~Lagae},
  \bibinfo{author}{T.~Schrefl}, \bibinfo{title}{Current-driven vortex
  oscillations in metallic nanocontacts}, \bibinfo{journal}{Phys. Rev. Lett.}
  \bibinfo{volume}{100} (\bibinfo{year}{2008}) \bibinfo{pages}{257201}.
\bibitem[{Parkin et~al.(2008)Parkin, Hayashi and Thomas}]{pht2008}
\bibinfo{author}{S.S.P. Parkin}, \bibinfo{author}{M.~Hayashi},
  \bibinfo{author}{L.~Thomas}, \bibinfo{title}{Magnetic domain-wall racetrack
  memory}, \bibinfo{journal}{Science} \bibinfo{volume}{320}
  (\bibinfo{year}{2008}) \bibinfo{pages}{190--194}.
\bibitem[{Praetorius(2004)}]{praetorius2004}
\bibinfo{author}{D.~Praetorius}, \bibinfo{title}{Analysis of the operator
  {$\Delta^{-1}\mathrm{div}$} arising in magnetic models}, \bibinfo{journal}{Z.
  Anal. Anwend.} \bibinfo{volume}{23} (\bibinfo{year}{2004})
  \bibinfo{pages}{589--605}.
\bibitem[{Ralph and Stiles(2008)}]{rs2008}
\bibinfo{author}{D.C. Ralph}, \bibinfo{author}{M.D. Stiles},
  \bibinfo{title}{Spin transfer torques}, \bibinfo{journal}{J. Magn. Magn.
  Mater.} \bibinfo{volume}{320} (\bibinfo{year}{2008})
  \bibinfo{pages}{1190--1216}.
\bibitem[{Shpiro et~al.(2003)Shpiro, Levy and Zhang}]{slz2003}
\bibinfo{author}{A.~Shpiro}, \bibinfo{author}{P.M. Levy},
  \bibinfo{author}{S.~Zhang}, \bibinfo{title}{Self-consistent treatment of
  nonequilibrium spin torques in magnetic multilayers}, \bibinfo{journal}{Phys.
  Rev. B} \bibinfo{volume}{67} (\bibinfo{year}{2003}) \bibinfo{pages}{104430}.
\bibitem[{Slonczewski(1996)}]{slonczewski1996}
\bibinfo{author}{J.C. Slonczewski}, \bibinfo{title}{Current-driven excitation
  of magnetic multilayers}, \bibinfo{journal}{J. Magn. Magn. Mat.}
  \bibinfo{volume}{159} (\bibinfo{year}{1996}) \bibinfo{pages}{L1--L7}.
\bibitem[{van Son et~al.(1987)van Son, van Kempen and Wyder}]{skw1987}
\bibinfo{author}{P.C. van Son}, \bibinfo{author}{H.~van Kempen},
  \bibinfo{author}{P.~Wyder}, \bibinfo{title}{Boundary resistance of the
  ferromagnetic-nonferromagnetic metal interface}, \bibinfo{journal}{Phys. Rev.
  Lett.} \bibinfo{volume}{58} (\bibinfo{year}{1987})
  \bibinfo{pages}{2271--2273}.
\bibitem[{Sun and Ralph(2008)}]{sr2008}
\bibinfo{author}{J.Z. Sun}, \bibinfo{author}{D.C. Ralph},
  \bibinfo{title}{Magnetoresistance and spin-transfer torque in magnetic tunnel
  junctions}, \bibinfo{journal}{J. Magn. Magn. Mater.} \bibinfo{volume}{320}
  (\bibinfo{year}{2008}) \bibinfo{pages}{1190--1216}.
\bibitem[{Valet and Fert(1993)}]{vf1993}
\bibinfo{author}{T.~Valet}, \bibinfo{author}{A.~Fert}, \bibinfo{title}{Theory
  of the perpendicular magnetoresistance in magnetic multilayers},
  \bibinfo{journal}{Phys. Rev. B} \bibinfo{volume}{48} (\bibinfo{year}{1993})
  \bibinfo{pages}{7099--7113}.

\end{thebibliography}
\end{document}